\renewcommand\ss{\scriptstyle}
\DeclareMathOperator\Spec{Spec}
\DeclareMathOperator\rank{rank}
\DeclareMathOperator\diag{diag}
\DeclareMathOperator\Mat{Mat}
\newcommand\CC{\mathbb{C}}
\newcommand\ZZ{\mathbb{Z}}
\newcommand\NN{\mathbb{N}}
\newcommand\onto\twoheadrightarrow
\newcommand\into\hookrightarrow
\newcommand\ul\underline
\newcommand\union\cup
\newcommand\Union\bigcup
\newcommand\barX{{\overline X}}
\newcommand\iso\cong
\newcommand\dom\backslash
\newcommand\from\leftarrow
\newcommand\bs\backslash
\newcommand\dW{{\EuScript W}}
\newcommand\dE{{\EuScript E}}
\newcommand\dS{{\EuScript S}}
\newcommand\dN{{\EuScript N}}
\renewcommand\SS{{\mathscr S}}
\newcommand\tp{\dW}
\newcommand\bt{\dE}
\newcommand\wt{\mathrm{wt}}
\newcommand\tensor\otimes
\newcommand\flux\Phi
\newcommand\rem[2][]{}
\newcommand{\gettikzxy}[3]{
  \tikz@scan@one@point\pgfutil@firstofone#1\relax
\pgfmathsetmacro{#2}{\the\pgf@x/\linkpatternunit}
\pgfmathsetmacro{#3}{\the\pgf@y/\linkpatternunit}
}
\tikzset{label anchor/.code={%
    \let\tikz@auto@anchor=\pgfutil@empty
    \def\tikz@anchor{#1}
  },
  label anchor/.default=center
}
\tikzset{arrow/.style={postaction={decorate,thick,decoration={markings,mark = at position #1 with {\arrow{>}}}}},arrow/.default=0.5}
\tikzset{invarrow/.style={postaction={decorate,thick,decoration={markings,mark = at position #1 with {\arrow{<}}}}},invarrow/.default=0.5}
\newdimen\linkpatternunit%
\newif\iflinkpatterninverted
\newif\iflinkpatterntikzstarted
\newif\iflinkpatternboxed
\newif\iflinkpatternaxis
\newif\iflinkpatternstraightlines
\newif\iflinkpatternnumbered
\newif\iflinkpatternalias
\newif\iflinkpatternnode
\newif\iflinkpatterncentered
\def\linkpatternlooseness{0.2}
\def\linkpatternsquareness{0.35}
\def\linkpatternvertexcolor{red}%
\def\linkpatternedgecolor{blue}%
\def\linkpatternboxcolor{none}%
\def\linkpatternheight{0}
\def\linkpatternwidth{0}
\def\linkpatternshape{default}
\def\linkpatternnumbering{default}
\def\linkpatternpos{(0,0)}
\def\linkpatternextraspace{0}
\def\firstchar#1#2\empty{#1}%
\def\linkpatterndo#1#2{
\edef\param{\csname linkpattern#2\endcsname}
\edef\firstcharparam{\expandafter\firstchar\param\empty}
\expandafter\ifcat\firstcharparam a
\expandafter\ifx\csname linkpattern#1\param\endcsname\relax
\csname linkpattern#1unknown\endcsname
\else
\csname linkpattern#1\csname linkpattern#2\endcsname\endcsname
\fi
\else
\csname linkpattern#1unknown\endcsname
\fi
}%
\def\linkpatterncoordtangle{\ifnum\x>\lphalfsize\pgfmathparse{\lpsize+1-\x}\xdef\lpcoordx{\pgfmathresult}\xdef\lpcoordy{\lpheight}\xdef\lpangle{270}\else\xdef\lpcoordx{\x}\xdef\lpcoordy{-\lpheight}\xdef\lpangle{90}\fi}
\def\linkpatterncoordpipedream{\ifnum\x>\lphalfsize\pgfmathparse{\lpsize+1-\x-0.5}\xdef\lpcoordx{\pgfmathresult}\xdef\lpcoordy{0}\xdef\lpangle{270}\else\pgfmathparse{0.5-\x}\xdef\lpcoordy{\pgfmathresult}\xdef\lpcoordx{0}\xdef\lpangle{0}\fi}
\def\linkpatterncoordrectangle{
\ifnum\x>\lptqsize
\pgfmathparse{\lpsize+1-\x-0.5}\xdef\lpcoordx{\pgfmathresult}\xdef\lpcoordy{0}\xdef\lpangle{270}
\else\ifnum\x>\lphalfsize
\pgfmathparse{\x-\lptqsize-0.5}\xdef\lpcoordy{\pgfmathresult}\xdef\lpcoordx{\linkpatternwidth}\xdef\lpangle{180}
\else\ifnum\x>\linkpatternheight
\pgfmathparse{\x-\linkpatternheight-0.5}\xdef\lpcoordx{\pgfmathresult}\xdef\lpcoordy{-\linkpatternheight}\xdef\lpangle{90}
\else
\pgfmathparse{0.5-\x}\xdef\lpcoordy{\pgfmathresult}\xdef\lpcoordx{0}\xdef\lpangle{0}
\fi\fi\fi
}%
\def\linkpatternsetsizeunknown{
\global\lpsize=\linkpatternsize
\if\linkpatternheight0
\xdef\maxsep{0}
\foreach \x/\xx in \mylist%
{%
\edef\tempx{\withoutprime{\x}}
\edef\tempxx{\withoutprime{\xx}}
\pgfmathparse{max(\maxsep,abs(\tempx-\tempxx))}
\xdef\maxsep{\pgfmathresult}
}%
\pgfmathparse{0.25+0.8*\linkpatternsquareness*\maxsep}
\xdef\lpheight{\pgfmathresult}
\else
\xdef\lpheight{\linkpatternheight}
\fi
}
\def\linkpatternrightmostunknown{
\global\lpsize=0
\global\tempsize=0
\foreach\x/\labx in \linkpatternnumbering
{
\edef\tempx{\withoutprime{\x}}
\ifnum\lpsize<\tempx\global\lpsize=\tempx\fi
\global\advance\tempsize by 1
}
\ifnum\tempsize>\lpsize\global\lpsize=\tempsize\fi
}%
\def\linkpatternrightmostdefault{
\global\lpsize=0
\global\tempsize=0
\foreach \x/\y in \mylist
{
\edef\tempx{\withoutprime{\x}}
\ifnum\lpsize<\tempx\global\lpsize=\tempx\fi
\ifx\x\y
\global\advance\tempsize by 1
\else
\edef\tempy{\withoutprime{\y}}
\ifnum\lpsize<\tempy\global\lpsize=\tempy\fi%
\global\advance\tempsize by 2
\fi
}
\ifnum\tempsize>\lpsize\global\lpsize=\tempsize\fi
}%
\def\linkpatternrightmosttangle{
\global\lpsize=0
\global\tempsize=0
\foreach \x/\y in \mylist
{
\edef\tempx{\withoutprime{\x}}
\ifnum\lpsize<\tempx\global\lpsize=\tempx\fi
\ifx\x\y
\global\advance\tempsize by 1
\else
\edef\tempy{\withoutprime{\y}}
\ifnum\lpsize<\tempy\global\lpsize=\tempy\fi%
\global\advance\tempsize by 2
\fi
}
\global\advance\lpsize by\lpsize
\ifnum\tempsize>\lpsize\global\lpsize=\tempsize\fi
}%
\newcommand\linkpattern[2][]{
{
\pgfkeys{/linkpattern/.cd,#1}
\edef\mylist{#2}
\def\primetest##1'{}%
\def\hasaprime##1{\expandafter\primetest##1''}
\def\internalwithoutprime##1'{##1}%
\def\withoutprime##1{\if\hasaprime##1 %
\expandafter\internalwithoutprime##1\else ##1\fi}%
\iflinkpatternnumbered%
\iflinkpatterninverted
\tikzset{/linkpattern/lbl/.style n args={3}{label={[/linkpattern/labeloptionslist=-##1,##3] ##1:##2}}}%
\else%
\tikzset{/linkpattern/lbl/.style n args={3}{label={[/linkpattern/labeloptionslist=##1,##3] ##1:##2}}}%
\fi%
\else%
\tikzset{/linkpattern/lbl/.style={}}%
\fi%
\tikzifinpicture{\linkpatterntikzstartedtrue%
\begin{scope}[shift=\linkpatternpos,/linkpattern/every linkpattern]
}{%
\linkpatterntikzstartedfalse%
\iflinkpatterncentered
\begin{tikzpicture}[baseline=(current  bounding  box.center),/linkpattern/every linkpattern]%
\else
\begin{tikzpicture}[baseline=0,/linkpattern/every linkpattern]%
\fi
}%
\begin{scope}[local bounding box=link pattern box]
\iflinkpatterninverted%
\begin{scope}[yscale=-1]%
\fi%
\linkpatterndo{setsize}{shape}
\ifnum\lpsize=0
\linkpatterndo{rightmost}{numbering}
\fi
\pgfmathtruncatemacro{\lphalfsize}{\lpsize/2}
\linkpatterndo{numbering}{numbering}
\iflinkpatternboxed
\linkpatterndo{drawbox}{shape}
\else
\iflinkpatternaxis
\linkpatterndo{drawaxis}{shape}
\fi
\fi
\foreach\xx/\xlab/\opt in \lpnumbering
{
\ifx\xlab\opt\def\opt{}\fi
\if\hasaprime\xx %
\pgfmathtruncatemacro{\xx}{\lpsize+1-\withoutprime{\xx}}
\fi
%
%
\ifnum\linkpatternfused>1
\pgfmathsetmacro{\x}{0.4*(0.5+\linkpatternfused*(0.5+floor((\xx-1)/\linkpatternfused)))+0.6*\xx}
\else
\def\x{\xx}
\fi
\linkpatterndo{coord}{shape}
\iflinkpatternalias\def\xlabb{\xlab}\else\def\xlabb{\xx}\fi
\path (\lpcoordx,\lpcoordy) coordinate[/linkpattern/vertex,/linkpattern/lbl={\lpangle+180}{\xlab}{\opt},alias=v\xlabb] (v\xx) ++(\lpangle:\linkpatternunit) coordinate[alias=vv\xlabb] (vv\xx); 
}
\foreach \a/\b/\c in \mylist
{
\if\hasaprime\a %
\pgfmathtruncatemacro{\a}{\lpsize+1-\withoutprime{\a}}
\fi
\ifx\b\c\def\c{}\fi
\draw[/linkpattern/edge]
\ifx\a\b
(v\a)
\c
--
++(0,\lpheight);
\else
\pgfextra{
\if\hasaprime\b %
\pgfmathtruncatemacro{\b}{\lpsize+1-\withoutprime{\b}}
\fi
\gettikzxy{(v\a)}{\ax}{\ay}
\gettikzxy{(v\b)}{\bx}{\by}
\gettikzxy{(vv\a)}{\axx}{\ayy}
\gettikzxy{(vv\b)}{\bxx}{\byy}
\pgfmathsetmacro{\dist}{sqrt((\ax-\bx)*(\ax-\bx)+(\ay-\by)*(\ay-\by))}
\pgfmathsetmacro{\abx}{(\axx-\ax)*\dist*\linkpatternsquareness+(\bx-\ax)*\linkpatternlooseness)}
\pgfmathsetmacro{\aby}{(\ayy-\ay)*\dist*\linkpatternsquareness+(\by-\ay)*\linkpatternlooseness)}
\pgfmathsetmacro{\bax}{(\bxx-\bx)*\dist*\linkpatternsquareness+(\ax-\bx)*\linkpatternlooseness)}
\pgfmathsetmacro{\bay}{(\byy-\by)*\dist*\linkpatternsquareness+(\ay-\by)*\linkpatternlooseness)}
}
(v\a)
\c
\iflinkpatternstraightlines
\pgfextra{
\pgfmathsetmacro{\t}{((\ax-\bx)*\bay-(\ay-\by)*\bax)/(\aby*\bax-\abx*\bay)}
\pgfmathsetmacro{\abx}{\t*\abx}
\pgfmathsetmacro{\aby}{\t*\aby}
}
[rounded corners=0.2\linkpatternunit] -- ++(\abx,\aby) -- (v\b);
\else
.. controls ++(\abx,\aby) and ++(\bax,\bay) .. 
\fi
(v\b);
\fi
}
\end{scope}
\iflinkpatternnode
\node[fit=(link pattern box),/linkpattern/nodeoptionslist] {};
\fi
\iflinkpatterninverted
\end{scope}
\fi
\iflinkpatterntikzstarted
\end{scope}
\else%
\end{tikzpicture}%
\fi%
}}%
\newcommand\tanglelinkpattern[3][]{%
{
\pgfkeys{/linkpattern/.cd,#1}
\iflinkpatterninverted
\begin{tikzpicture}[/linkpattern/every linkpattern,baseline=\linkpatternunit]%
\else
\begin{tikzpicture}[/linkpattern/every linkpattern,baseline=-\linkpatternunit]%
\fi
\linkpattern[#1,tikzstarted,numbered=false]{#3}
\pgfmathtruncatemacro{\lptempsize}{2*\linkpatternsize}
\iflinkpatterninverted
\begin{scope}[yshift=0.5*\linkpatternunit]
\else
\begin{scope}[yshift=-0.5*\linkpatternunit]
\fi
\linkpattern[tangle,#1,tikzstarted,size=\lptempsize,
numbering=halftangle,
height=0.5]{#2}
\end{scope}
\end{tikzpicture}%
}}
\newcommand\schubdiag[4][]{%
\pgfkeys{/linkpattern/.cd,#1}
\iflinkpatterntikzstarted\else%
\begin{tikzpicture}[scale=0.5]
\fi%
\iflinkpatterninverted%
\begin{scope}[yscale=-1]%
\fi%
\draw (0,0) grid (#2,#3);
\edef\mylist{#4}
\foreach\y/\x/\z in \mylist
{
\ifx\x\z
\draw[decorate,decoration={zigzag,
amplitude=1pt,segment length=5pt}]
(\x-0.5,#3) -- (\x-0.5,\y-0.5) node[circle,fill=black,inner sep=2pt] {} -- (#2,\y-0.5);
\else
\node at (\x-0.5,\y-0.5) {$\z$};
\fi
}
\iflinkpatterninverted
\end{scope}
\fi
\iflinkpatterntikzstarted\else%
\end{tikzpicture}%
\fi%
}
\tikzset{circle split part fill/.style  args={#1,#2}{%
 alias=tmp@name,
  postaction={%
    insert path={
     \pgfextra{%
     \pgfpointdiff{\pgfpointanchor{\pgf@node@name}{center}}%
                  {\pgfpointanchor{\pgf@node@name}{east}}%
     \pgfmathsetmacro\insiderad{\pgf@x}
      \fill[#1] (\pgf@node@name.base) ([xshift=-\pgflinewidth]\pgf@node@name.east) arc
                          (0:180:\insiderad-\pgflinewidth)--cycle;
      \fill[#2] (\pgf@node@name.base) ([xshift=\pgflinewidth]\pgf@node@name.west)  arc
                           (180:360:\insiderad-\pgflinewidth)--cycle;                    }}}}}  
\tikzset{bdot/.style={circle,circle split,draw,circle split part fill={black,white},thin,inner sep=1pt}}%
\tikzset{wdot/.style={circle,circle split,draw,circle split part fill={white,black},thin,inner sep=1pt}}%
\newcommand\circlelinkpattern[2][]{
{
\pgfkeys{/linkpattern/.cd,#1}
\iflinkpatterntikzstarted\else%
\begin{tikzpicture}[/linkpattern/every linkpattern]%
\fi%
\iflinkpatterninverted%
\begin{scope}[yscale=-1]%
\fi%
\global\lpsize=\linkpatternsize
\edef\mylist{#2}
\foreach \x/\y in \mylist
{
\ifnum\x>\lpsize\global\lpsize=\x\fi
\ifnum\y>\lpsize\global\lpsize=\y\fi
}
%
\iflinkpatternaxis
\draw (0,0) circle (1);
\fi
\foreach\x in {1,...,\lpsize}
{
\pgfmathparse{(0.3*floor((\x-1)/\linkpatternfused)+0.7*((\x-0.5)/\linkpatternfused-0.5))*\linkpatternfused*360/\lpsize}
\coordinate[/linkpattern/vertex] (v\x) at (\pgfmathresult:1);
}
\foreach \x/\y/\z in \mylist
{
\ifx\y\z%
\draw[/linkpattern/edge] (v\x) .. controls ($0.5*(v\x)$) and  ($0.5*(v\y)$) .. (v\y);
\else
\draw[/linkpattern/edge] \z (v\x) .. controls ($0.5*(v\x)$) and  ($0.5*(v\y)$) .. (v\y);
\fi
}
\iflinkpatternnumbered%
\pgfmathparse{\lpsize/\linkpatternfused}
\global\lpsize=\pgfmathresult
\def\linkpatternnumbering{1,...,\lpsize}
\newdimen\angle
\foreach\x/\xx/\opt in \linkpatternnumbering
{
  \pgfmathsetmacro{\angle}{360/\lpsize*(\x-1)}
\ifx\xx\opt%
  \node[outer sep=1pt,anchor=180+\angle] at (\angle:1) {$\scriptstyle\xx$}; 
\else
  \node[outer sep=1pt,anchor=180+\angle,\opt] at (\angle:1) {$\scriptstyle\xx$}; 
\fi
}
\fi%
\iflinkpatterninverted%
\end{scope}
\fi%
\iflinkpatterntikzstarted\else%
\end{tikzpicture}%
\fi%
}}%
\newdimen{\loopcellsize}\setlength{\loopcellsize}{0.75cm}
\tikzset{bgplaq/.style={draw=black,fill=\linkpatternboxcolor}}
\def\plaqwest{}
\def\plaqeast{}
\def\plaqnorth{}
\def\plaqsouth{}
\def\plaqname{plaq}
\newcommand\plaq[2][]{
\node[bgplaq,rectangle,draw,minimum size=\loopcellsize,transform shape] (\plaqname) {};
\useasboundingbox;
\pgfkeys{/linkpattern/.cd,#1}
\ifx#2\empty\else
\begin{scope}[x=\loopcellsize,y=\loopcellsize]
\csname plaq#2\endcsname
\end{scope}\fi
}
\tikzset{loop/.code={\def\plaqname{loop-\the\pgfmatrixcurrentrow-\the\pgfmatrixcurrentcolumn}},loop/.append style={matrix,row sep={\loopcellsize,between origins},column sep={\loopcellsize,between origins}}}
\def\linkpatternboxcolor{pink!50!white}
\newcommand\plaqctr[1]{\begin{tikzpicture}[baseline={([yshift=-\the\dimexpr\fontdimen22\textfont2\relax]current  bounding  box.center)}]\plaq{#1}\end{tikzpicture}}
\newtheorem{thm}{Theorem}
\newtheorem{prop}[thm]{Proposition}
\newtheorem{cor}[thm]{Corollary}
\newtheorem{lem}[thm]{Lemma}
\theoremstyle{remark}
\newtheorem*{rmk*}{Remark}
\newtheorem*{ex*}{Example}
\numberwithin{equation}{section}
\newcommand\defn[1]{{\bf #1}}
\newcommand\junk[1]{}
\author{Allen Knutson}
\address{Allen Knutson, Cornell University, Ithaca, New York}
\email{allenk@math.cornell.edu}
\thanks{AK was supported by NSF grant DMS--2246959.}
\author{Paul Zinn-Justin}
\address{Paul Zinn-Justin, School of Mathematics and Statistics, The University of Melbourne, 
Victoria 3010, Australia}
\email{pzinn@unimelb.edu.au}
\thanks{PZJ was supported by ARC grants DP210103081 and DP240101787.}
\title{\texorpdfstring{\vspace*{-2.5cm}Hybrid pipe dreams for the lower-upper scheme}{Hybrid pipe dreams for the lower-upper scheme}}
\begin{document}
\maketitle
\begin{abstract}
  In \cite{KU-hybrid} were introduced {\em hybrid pipe dreams} interpolating
  between classic and bumpless pipe {dreams}, each hybridization giving
  a different formula for double Schubert polynomials.
  A bijective proof was given (following \cite{GaoHuang}) of the independence
  of hybridization, but only for nonequivariant Schubert polynomials.
  In this paper we further generalize to {\em hybrid generic pipe dreams},
  replacing the bijective proof of hybridization-independence with a
  Yang--Baxter-based proof that allows one to maintain equivariance.
  An additional YB-based proof establishes a divided-difference type
  recurrence for these {\em generic pipe dream polynomials}.

  These polynomials compute something richer than double Schubert
  polynomials, namely the equivariant classes of the {\em lower-upper
    varieties} introduced in \cite{Kn-uu}. We give two proofs of this:
  the easier being a proof that the recurrence relation holds on those classes,
  the more difficult being a degeneration of the lower-upper variety
  to a union of quadratic complete intersections (plus, possibly, some
  embedded components) whose individual classes match those of the
  generic pipe dreams. One new feature of the generic situation is a definition
  of the ``flux'' through an edge of the matrix; the notion of pipe dream
  itself can then be {\em derived} from the equalities among the fluxes.
\end{abstract}

\tableofcontents

\section{Introduction}\label{sec:intro}
\subsection{Prior pipe dreams}
Classic pipe dreams were introduced in \cite{BB-rcgraph,FS-Schubert} to give positive
formul\ae\ for single and double (i.e. equivariant) Schubert polynomials.
They were given a direct interpretation in \cite{KM-Schubert}:
specifically, the double Schubert polynomials were interpreted as the
equivariant cohomology classes of {\em matrix Schubert varieties,} and
each matrix Schubert variety was Gr\"obner-degenerated to a reduced union
of co\"ordinate spaces (one for each classic pipe dream).
This was unwound in \cite{K-cotransition} to a recurrence relation based
on a ``cotransition formula'' for double Schubert polynomials.

{\em Bumpless pipe dreams}\/ were introduced in \cite{LLS-BPD},
giving another formula for double Schubert polynomials, and this formula
was given a recurrence relation proof in \cite{Wei-BPD} based on the (older)
``transition formula''. The Gr\"obner geometry is more complicated here
(see \cite{KleinWeigandt}),
as the corresponding Gr\"obner degeneration is not (even generically) reduced.

One thing that is obvious is that the {\em number} of classic pipe
dreams and bumpless pipe dreams for a given permutation must coincide,
as each computes the degree of the matrix Schubert variety.
A bijection was established in \cite{GaoHuang}. A family of
intermediate objects, the {\em hybrid pipe dreams}, was introduced
in \cite{KU-hybrid}. With these came a collection of bijections of a more
local nature, each associated to a small change in hybridization.

In this paper we introduce a richer set of polynomials,
with double Schubert polynomials arising as certain leading forms thereof.
This richer story includes a recurrence relation, a geometric
interpretation, and a degeneration into simple (albeit nonlinear) components.
Some of these results were announced in \cite{artic87}.

\subsection{Hybrid generic pipe dreams}
We start with the combinatorics, leaving geometric motivation to the
next subsection.

Take $m\leq n$ natural numbers, and define $\SS_{m,n}$ to be
the set of injective maps from $\{1,\ldots,m\}$ to $\{1,\ldots,n\}$,
which we identify with maximal rank $m\times n$ partial permutation
matrices via $\pi_{ij} = 1$ if $\pi(i)=j$, zero otherwise.

Define a \defn{hybridization} as a binary sequence $\beta\in \{\tp,\bt\}^m$,
where $\tp$ and $\bt$ stand for West and East respectively.
Given a hybridization,
declare the $i^{\rm th}$ row counted from the top of a $m\times n$ grid to be ``of type'' $\beta_i$.
Define a \textbf{hybrid generic pipe dream} of type $\beta$ to be a filling of that $m\times n$ grid with the following tiles:
\tikzset{loopmod/.code={\def\plaqname{loop-\the\pgfmatrixcurrentrow-\the\pgfmatrixcurrentcolumn}},loop/.append style={matrix}}
\begin{center}
  \begin{tikzpicture}
\node[loop,column sep=.8cm,row sep=.8cm] {
\plaq{a} & \plaq{j} & \plaq{r} & \plaq{c} & \plaq{h} & \plaq{v} & \plaq{}
\\
\plaq{b} & \plaq{k} & \plaq{i} & \plaq{c} & \plaq{h} & \plaq{v} & \plaq{}
\\
};
\draw[decoration={brace},decorate] ([yshift=-5mm]loop-2-3.south east) -- node[below=2mm] {elbow tiles} ([yshift=-5mm]loop-2-1.south west);
\draw[decoration={brace},decorate] ([yshift=-5mm]loop-2-6.south east) -- node[below=2mm] {straight tiles} ([yshift=-5mm]loop-2-4.south west);
\draw[decoration={brace},decorate,draw=none] ([yshift=-5mm]loop-2-7.south east) -- node[below=2mm] {blank tile} ([yshift=-5mm]loop-2-7.south west);
\path (loop-1-1.west) -- ++(-2,0) node {rows of type $\tp$:};
\path (loop-2-1.west) -- ++(-2,0) node {rows of type $\bt$:};
\end{tikzpicture}
\end{center}
in such a way that
\begin{itemize}
\item 
  No pipe starts or terminates in the middle of the grid;
\item In rows of type $\tp$, a pipe enters from the West end of that row, but none exit from the East end;
\item In rows of type $\bt$, a pipe enters from the East end of that row, but none exit from the West end;
\item There are no pipes coming in from the South side.
\end{itemize}
There is no prohibition (as in other kinds of pipe dreams) on pipes
crossing twice, though this condition will come up in \S\ref{ssec:nongeneric}.

If we orient pipes in $\tp$ rows North, East, or Northeast, and orient pipes
in $\bt$ rows North, West, or Northwest, we get a consistent orientation.
In each row, no pipe exits on the East or West side (one {\em enters}
but none {\em exit}),
hence, each pipe continues upward until it comes out the North side.
Number the pipes on
the West and East sides counterclockwise, starting from the Northwest corner.
On the left below is an example with $m=3$, $n=4$, $\beta=(\tp,\bt,\tp)$:
$$ 
\begin{tikzpicture}[baseline={(current bounding box.center)}]
    \node[loop]{
\plaq{j}&\plaq{r}&\plaq{c}&\plaq{j}\\
\plaq{} &\plaq{i}&\plaq{c}&\plaq{h}\\
\plaq{h}&\plaq{h}&\plaq{j}&\plaq{}\\
};
\path (loop-1-1.west) node[left] {$1$};
\path (loop-3-1.west) node[left] {$2$};
\path (loop-2-4.east) node[right] {$3$};
\end{tikzpicture}
\qquad\mapsto\qquad
\begin{matrix}
  A+B & A+B & A+x_1-y_3 & A+B \\ \\
  A+x_3-y_1 & A+B & B-x_3+y_3 & B-x_3+y_4 \\ \\
  A+x_2-y_1 & A+x_2-y_2 & A+B & B - x_2+y_4
\end{matrix}
$$ 
Let $\varphi(i)$ denote the number of the pipe entering in physical row $i$.
This $\varphi \in \SS_m$ depends only on $\beta$;
it is $\varphi = 132$ in the above example.

To a hybrid generic pipe dream we associate a polynomial in
$\ZZ[x_1,\ldots,x_m,y_1,\ldots,y_n,A,B]$, a product over locations $(i,j)$ of tiles,
of the following linear factors (as exemplified in the table above right):
\begin{itemize}
\item Elbow tiles contribute $A+B$.
\item If $\beta_i=\tp$, then blank tiles contribute $B-x_{\varphi(i)}+y_j$,
  straight tiles contribute $A+x_{\varphi(i)}-y_j$.
\item If $\beta_i=\bt$, \, then blank tiles contribute $A+x_{\varphi(i)}-y_j$,
  straight tiles contribute $B-x_{\varphi(i)}+y_j$.
\end{itemize}
In all, $(A+B)^5(A+x_1-y_3)(A+x_3-y_1)(B-x_3+y_3)(B-x_3+y_4)
(A+x_2-y_1)(A+x_2-y_2)(B-x_2+y_4)$
for the pipe dream above.

For future purposes, we denote the contribution of a tile
$\wt_{\beta_i}(\text{tile},x_{\varphi(i)}-y_j)$.

\rem[gray]{$\varphi$ determines $\beta$ except for
  last row! enumeration: $2^{m-1}$ vs $2^m$. closely mimicks the
  proof} \rem[gray]{better notation than $\varphi$, it's too cumbersome,
  what about $\beta[i]$ or $\beta:i$ or $\beta|i$ or $\beta/i$?
  $x_{\beta[i]}$, $x_{\beta:i}$, $x_{\beta|i}$, $x_{\beta/i}$}

To each pipe dream we associate also its \textbf{connectivity}
$\pi\in\SS_{m,n}$, where for $i\in [m]$ the pipe numbered $i$ exits the
North side at column $\pi(i)$.%
\junk{
  as follows: number the pipes counterclockwise on
  the West and East sides, starting from the Northwest corner, as on the
  example above, and then record the columns where they exit on the
  North side.  Also define $\varphi(i)$ to be the label (numbering) of
  the pipe entering on row $i=1,\ldots,m$; $\varphi\in\SS_m$ depends
  only on $\beta$.
}
In the example above, one has $\pi=134$.\junk{ and $\varphi=132$.}
(Again, there is no special rule concerning double crossings.)
Given a hybridization $\beta$,
define the \textbf{generic pipe dream polynomial}
$G_\pi\in \ZZ[x_1,\ldots,x_m,y_1,\ldots,y_n,A,B]$ associated to
$\pi\in\SS_{m,n}$ to be a sum over hybrid generic pipe dreams of type $\beta$
with connectivity $\pi$, where each term is the product we associated above.

Using Yang--Baxter methods coming from quantum integrable systems,
we will show
\begin{enumerate}
\item (Theorem \ref{thm:beta}) $G_\pi$ is independent of the
  hybridization $\beta$, which is why we omit it from the notation.
\item (Theorem \ref{thm:ind}) $G_{\pi r_i} = ((A+B)\partial_i - r_i) G_\pi$,
  where $r_i,\partial_i$ are the usual reflection and divided difference
  operators. This and a simple base case uniquely determine the polynomials.
\item Define a \defn{nongeneric hybrid pipe dream} by forbidding
  tiles \plaqctr{v} on rows of type $\tp$, and tiles \plaqctr{b} on rows of type $\bt$, and double crossings of pipes \cite{KU-hybrid}.
  Then the leading contribution to $G_\pi$ as $B\to\infty$ is given by the sum over
  nongeneric hybrid pipe dreams; see theorem \ref{thm:nongen} for a precise statement.
  For $m=n$, we recover the results of \cite{KU-hybrid}. In particular,
  in this limit, the relevant $(\tp,\ldots,\tp)$ (resp.\ $(\bt,\ldots,\bt)$) pipe dreams are ``classic'' (resp.\ ``bumpless'') pipe dreams,
  and the leading coefficient of $G_\pi$ is the double Schubert polynomial $S_\pi(A+x_1,\ldots,A+x_n,y_1,\ldots,y_n)$;
  see \cite{KU-hybrid} for details.
\item By left-right mirror symmetry of hybrid pipe dreams (see \S\ref{ssec:mirror}),
  a similar result holds for the $A\to\infty$ leading terms of $G_\pi$.
\end{enumerate}

\begin{ex*}
Here are the hybrid generic pipe dreams for $m=n=3$, $\pi=312$ and $\beta=(\bt,\tp,\bt)$:
\begin{center}
\begin{tikzpicture}
\node[loop]{\plaq{v}&\plaq{i}&\plaq{c}\\\plaq{c}&\plaq{h}&\plaq{j}\\\plaq{i}&\plaq{h}&\plaq{h}\\};
\path (loop-2-1.west) node[left] {$1$};
\path (loop-3-3.east) node[right] {$2$};
\path (loop-1-3.east) node[right] {$3$};
\path (loop-3-2.south) node[below,align=left] {$\ss\left(B -x_{3}+y_{1}\right)(A+B)\left(B -x_{3}+y_{3}\right)$\\$\ss\left(A +x_{1}-y_{1}\right)\left(A +x_{1}-y_{2}\right)(A+B)$\\$\ss(A+B)\left(B -x_{2}+y_{2}\right)\left(B -x_{2}+y_{3}\right)$};
\begin{scope}[xshift=5cm]
\node[loop]{\plaq{i}&\plaq{b}&\plaq{c}\\\plaq{h}&\plaq{c}&\plaq{j}\\\plaq{}&\plaq{i}&\plaq{h}\\};
\path (loop-2-1.west) node[left] {$1$};
\path (loop-3-3.east) node[right] {$2$};
\path (loop-1-3.east) node[right] {$3$};
\path (loop-3-2.south) node[below,align=left] {$\ss(A+B)^2\left(B -x_{3}+y_{3}\right)$\\$\ss\left(A +x_{1}-y_{1}\right)\left(A +x_{1}-y_{2}\right)(A+B)$\\$\ss\left(A +x_{2}-y_{1}\right)(A+B)\left(B -x_{2}+y_{3}\right)$};
\end{scope}
\end{tikzpicture}
\end{center}

Here are the generic pipe dreams for the same $\pi$ and $\beta=(\tp,\tp,\tp)$:\\
\begin{center}
\begin{tikzpicture}
\node[loop]{\plaq{c}&\plaq{c}&\plaq{j}\\\plaq{a}&\plaq{j}&\plaq{}\\\plaq{j}&\plaq{}&\plaq{}\\};
\path (loop-1-1.west) node[left] {$1$};
\path (loop-2-1.west) node[left] {$2$};
\path (loop-3-1.west) node[left] {$3$};
\path (loop-3-2.south) node[below,align=left] {$\ss\left(A +x_{1}-y_{1}\right)\left(A +x_{1}-y_{2}\right)(A+B)$\\$\ss(A+B)^2\left(B -x_{2}+y_{3}\right)$\\$\ss(A+B)\left(B -x_{3}+y_{2}\right)\left(B -x_{3}+y_{3}\right)$};
\begin{scope}[xshift=5cm]
\node[loop]{\plaq{c}&\plaq{c}&\plaq{j}\\\plaq{j}&\plaq{v}&\plaq{}\\\plaq{h}&\plaq{j}&\plaq{}\\};
\path (loop-1-1.west) node[left] {$1$};
\path (loop-2-1.west) node[left] {$2$};
\path (loop-3-1.west) node[left] {$3$};
\path (loop-3-2.south) node[below,align=left] {$\ss\left(A +x_{1}-y_{1}\right)\left(A +x_{1}-y_{2}\right)(A+B)$\\$\ss(A+B)\left(A +x_{2}-y_{2}\right)\left(B -x_{2}+y_{3}\right)$\\$\ss\left(A +x_{3}-y_{1}\right)(A+B)\left(B -x_{3}+y_{3}\right)$};
\end{scope}
\end{tikzpicture}
\end{center}

Even though the summands are different, the sum is equal in both cases to
\[\ss
G_{312}=\left(A ^{2}+A \,B +A \,x_{2}-A \,y_{1}+B ^{2}-B \,x_{3}+B \,y_{2}+x_{2}x_{3}-x_{2}y_{1}-x_{3}y_{2}+y_{1}y_{2}\right)\left(B -x_{3}+y_{3}\right)\left(B -x_{2}+y_{3}\right)\left(A +x_{1}-y_{2}\right)\left(A +x_{1}-y_{1}\right)\left(A +B \right)^{3}
\]

In each case, only the left pipe dream is a nongeneric hybrid pipe
dream, corresponding to the leading contribution
$G_{312}\overset{B\to\infty}{\sim} (A+x_1-y_1)(A+x_1-y_2) B^7$ and
$S_{312}=(x_1-y_1)(x_1-y_2)$.
\end{ex*}

More generally, there is no hope of a bijection between the GPDs
corresponding to different choices of hybridization.  For example, for
$m=4$, $n=5$, $\pi=1253$, there are $76$, $78$, $80$ GPDs for
$\beta=(\bt,\tp,\bt,\tp)$, $(\tp,\tp,\tp,\tp)$, $(\bt,\bt,\bt,\bt)$
respectively. At best, one could introduce (following the approach of
\cite{DaojiBijective}) a notion of ``decorated generic hybrid pipe dream''
in which each blank is labeled $A$ or $B$, and seek a bijection of
those ($1475$ in this example).

\begin{rmk*}
  The reason that we call these pipe dreams ``generic'' is the following.
  In the context of quantum integrable systems, nongeneric pipe dreams
  (such as classic pipe dreams) correspond to a singular limit of the model based
  on the Yangian $\mathcal{U}_{\,\hbar}(\mathfrak{sl}_{m+1}[z])$ (and its lowest-dimensional
  fundamental representations -- for hybrid pipe dreams, one needs both of them),
  where one sends the quantum parameter $\hbar$ to infinity in an appropriate manner.
  In contrast, generic pipe dreams correspond to the case of generic $\hbar$
  (in our present language, $\hbar=A+B$).
\end{rmk*}

\subsection{Lower-upper varieties and a retrodiction of pipe dreams}
We now describe results analogous to those of \cite{KM-Schubert};
we define some varieties whose equivariant classes are the GPD
polynomials, and degenerate them to pieces corresponding to the
individual HGPDs. Perhaps the biggest difference is that the resulting
pieces are more complicated than co\"ordinate spaces. One striking
benefit of working in this ``generic'' context is that the pipe
dreams are especially clearly motivated (in \S\ref{sssec:flux}).

One possible motivation for introducing lower-upper varieties comes
from viewing them as deformed conormal matrix Schubert varieties, with
an application to the computation of Chern--Schwartz--MacPherson classes.
This point of view is considered in \cite{artic87}, but will not be
developed here.

\subsubsection{The varieties}
For $m,n \in \NN$, define the \defn{lower-upper scheme}
\[
E := \left\{ (X,Y) \in \Mat(m,n,\CC)\times \Mat(n,m,\CC):\  X Y\text{ lower triangular, } Y X\text{ upper triangular}\right\}
\]
In the $m=n$ case this scheme was introduced in \cite{Kn-uu},
but for inductive purposes we will need the rectangular case as well.
As we shall argue below, without loss of generality one may assume $m\le n$.

\rem[gray]{technically not true after degen... maybe one should worry
  about $m>n$ at some point? obviously degenerating column by column
  $m>n$ will look the same as degenerating row by row $m<n$, so we
  don't expect anything revolutionary. but we could degenerate row by
  row $m>n$ and that would presumably be messier (antiparticles appear)?}

Let $B_-$ be the group of invertible lower-triangular $m\times m$ matrices,
and $B_+$ be the group of invertible upper-triangular $n\times n$ matrices.
The group $B_-\times B_+$ acts on $\Mat(m,n,\CC)\times \Mat(n,m,\CC)$,
and on $E$, by $(b,c)\cdot (X,Y)=(b X c^{-1}, c Y b^{-1})$.

\begin{prop}\label{prop:irr}
  The irreducible components $E_\pi$ of $E$ are indexed by
  $\pi\in \SS_{m,n}$ and are given by
  \begin{align*}
    E_\pi &:= \overline{ \left\{(X,Y) \in E\colon
  (YX)_{\pi(i)\pi(i)} =     (XY)_{ii}\text{ distinct nonzero}\right\}}
\\
  &= \overline{(B_-\times B_+)\cdot \{(\pi,\pi^{T} s)\colon s\ m\times m\text{ diagonal} \}}
  \end{align*}
  In particular, the projection $(X,Y)\mapsto X$ of $E_\pi$ has
  image $\barX_\pi := \overline{B_- \pi B_+}$ the rectangular
  matrix Schubert variety. Similarly, the projection $(X,Y)\mapsto Y$
  has image $\gamma_m \barX_{\gamma_m \pi^T \gamma_n} \gamma_n$, 
  where $\gamma_m,\gamma_n$ denote the longest elements of $\SS_m$ and $\SS_n$.
\end{prop}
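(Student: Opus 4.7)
The strategy is to stratify $E$ via the $B_-\times B_+$-orbits of the first projection, identify the generic stratum over each $\pi\in\SS_{m,n}$ with both stated descriptions of $E_\pi$, and show these closures exhaust the irreducible components. Every matrix in $\Mat(m,n)$ lies in a unique $(B_-\times B_+)$-orbit $X_\pi := B_-\pi B_+$ indexed by an $m\times n$ partial permutation matrix $\pi$ of rank $\le m$, with closure the matrix Schubert variety $\barX_\pi$. Pulling back along $(X,Y)\mapsto X$ gives a locally closed stratification $E = \bigsqcup_\pi E^\pi$ with $E^\pi := E\cap(X_\pi\times\Mat(n,m))$, and by equivariance each $E^\pi$ is a vector bundle over $X_\pi$ whose fiber over $\pi$ is the linear space $Y^\pi := \{Y : \pi Y\text{ lower triangular},\ Y\pi\text{ upper triangular}\}$; in particular each $E^\pi$ is irreducible.

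For $\pi\in\SS_{m,n}$ I would verify $\dim E^\pi = m(n+1)$ by combinatorics: counting forced-zero positions of $Y$ yields $\dim Y^\pi = |\pi|+\mathrm{inv}(\pi)-\binom{m}{2}$ with $|\pi|:=\sum_i\pi(i)$, while the Rothe-diagram formula gives $\dim X_\pi = mn+\binom{m+1}{2}-|\pi|-\mathrm{inv}(\pi)$, summing to a value independent of $\pi$. Since diagonal entries of a triangular matrix are preserved by conjugation with a triangular matrix, the diagonals of $XY$ and $YX$ are $(B_-\times B_+)$-invariant; hence the subset $U_\pi\subset E^\pi$ on which $(XY)_{ii}$ are distinct and nonzero is $(B_-\times B_+)$-invariant open, and a base-point check at $(\pi,\pi^T s)$ (where $XY=\diag(s_1,\ldots,s_m)$ and $(YX)_{\pi(i)\pi(i)}=s_i$ automatically) shows $U_\pi$ equals the orbit of $\{(\pi,\pi^T s):s\in T_m\text{ regular}\}$ and is dense in $E^\pi$. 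Taking closures gives both stated descriptions of $E_\pi := \overline{E^\pi}$.

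The main obstacle is showing $E = \bigcup_{\pi\in\SS_{m,n}} E_\pi$, ruling out components supported entirely on the lower-rank locus. The same combinatorial count gives $\dim E^{\pi_0} < m(n+1)$ for every $\pi_0$ of rank $<m$, so top-dimensional exotic components are excluded; what remains is to show each such $E^{\pi_0}$ lies in the closure of some $E^\pi$ with $\pi\in\SS_{m,n}$. I would argue by explicit deformation: given $(\pi_0,Y_0)\in E^{\pi_0}$ with $Y_0$ generic in $Y^{\pi_0}$, solve the linear system on $A\in\Mat(m,n)$ (with $A$ supported on the missing rows of $\pi_0$) imposing that $AY_0$ is lower triangular and $Y_0 A$ is upper triangular; for generic $Y_0$ this system has a solution for which $X_t := \pi_0+tA$ has full rank at $t\ne0$, exhibiting $(\pi_0,Y_0)$ as the $t=0$ limit of points in $E^{\pi_1}$ for some extension $\pi_1\in\SS_{m,n}$. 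Combined with pairwise distinctness of the $E_\pi$ (distinguished by the locally constant invariant $\pi$ on $\bigsqcup U_\pi$), this identifies the $E_\pi$ for $\pi\in\SS_{m,n}$ as precisely the irreducible components of $E$.

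For the projection images, the image of $E_\pi$ under $(X,Y)\mapsto X$ is closed, $(B_-\times B_+)$-invariant and contains $\pi$, hence equals $\barX_\pi$. For $(X,Y)\mapsto Y$, I would invoke the involution $\sigma:(X,Y)\mapsto(\gamma_n Y\gamma_m,\gamma_m X\gamma_n)$, which identifies $E$ (for $(m,n)$) with the analogous lower-upper scheme for $(n,m)$ (conjugation by longest elements swaps upper and lower triangularity), exchanges the two projections, and sends the torus family $(\pi,\pi^T s)$ to one indexed by a reflected partial permutation obtained by composing $\pi^T$ with $\gamma_m$ and $\gamma_n$; applying the $X$-image statement to this reflected component and translating back via $\sigma$ yields the stated formula for the $Y$-image.
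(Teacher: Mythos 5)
Your overall architecture matches the paper's: stratify $E$ by the $(B_-\times B_+)$-orbit of $X$, show each full-rank stratum is irreducible of dimension $m(n+1)$ with the torus family dense, absorb the low-rank strata by deforming $X$ to full rank, and use an involution for the $Y$-projection. However, two of your steps are asserted at exactly the points where the real work lies.

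First, the claim that $U_\pi$ \emph{equals} the orbit of $\{(\pi,\pi^Ts)\}$ does not follow from a base-point check: that check only shows the orbit is \emph{contained} in the open set $U_\pi$, which gives density of $U_\pi$ but not of the orbit. For the second displayed description you need the orbit itself to be dense in $E^\pi$, which requires either a stabilizer computation showing the orbit has dimension $m(n+1)$ (the paper's lemma~\ref{lem:dense}) or the argument that any $(X,Y)\in E$ with $(YX)_{\pi(i)\pi(i)}=(XY)_{ii}$ distinct nonzero can be conjugated by $B_-\times B_+$ into the monomial form $(s\pi,\pi^Tt)$ — this uses that $XY$ invertible diagonalizable forces $YX$ diagonalizable, and is the content of the paper's lemma~\ref{lem:flux}(1). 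The latter is also what you need for the \emph{first} description, since the condition set there ranges over all of $E$, not just over your stratum $E^\pi$; you must rule out that it meets strata $E^\rho$ with $\rho\neq\pi$.

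Second, in the absorption of low-rank strata, the sentence ``for generic $Y_0$ this system has a solution for which $X_t$ has full rank'' is precisely the hard point and is unsubstantiated. The set of $Y_0\in Y^{\pi_0}$ for which the linear system $\{AY_0\text{ lower},\ Y_0A\text{ upper}\}$ admits a rank-completing solution is only constructible a priori, and exhibiting one good $Y_0$ does not make it dense (the solution space $L(Y_0)$ jumps on closed subsets, and your checkable points could a priori lie in a proper closed subset of $Y^{\pi_0}$). The paper resolves this in appendix~\ref{app:equi} by introducing the completion $\bar\pi_0$ and proving (lemma~\ref{lem:equidense}, via a careful infinitesimal-stabilizer computation) that $(N_-\times N_+)\cdot\{(s\pi_0,\bar\pi_0^Tt)\}$ is already dense in $E^1_{\pi_0}$; only then is the explicit line $X_\alpha=s((1-\alpha)\pi_0+\alpha\bar\pi_0)$, with $Y=\bar\pi_0^Tt$ fixed, enough to conclude $E^1_{\pi_0}\subseteq E_{\bar\pi_0}$. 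You should either supply that density statement or give an independent proof that your linear system generically has a rank-completing solution.
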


Inside $B_- \times B_+$ sits the torus of pairs of {\em diagonal}\/
matrices. 
We add to it two more circles that scale $X$ and $Y$ respectively.
All together they form a torus of rank $m+n+2$, denoted $T$.
We identify the equivariant cohomology ring $H^*_T(\Mat(m,n,\CC)\times \Mat(n,m,\CC))\cong H^*_T(pt)$
with $\ZZ[x_1,\ldots,x_m,y_1,\ldots,y_n,A,B]$, where the variables
are the weights of the torus action, according to
\begin{equation}\label{eq:wt}
\wt_T(X_{ij}) = A+x_i-y_j
\qquad
\wt_T(Y_{ji})  = B-x_i+y_j
\qquad
i=1,\ldots,m,\quad j=1,\ldots,n
\end{equation}
(This notation might seem to clash with the previous notation $\wt_{\tp/\bt}$ for the weights of
tiles; but in fact this is intentional, as $\wt_{\tp}(\text{straight tile},x_i-y_j)=\wt_T(X_{ij})$,
$\wt_{\tp}(\text{blank tile},x_i-y_j)=\wt_T(Y_{ji})$, and $\wt_{\tp}(\text{elbow tile},x_i-y_j)=\wt_T(X_{ij}Y_{ji})=\wt_T(X_{ij})+\wt_T(Y_{ji})$,
and similarly for $\wt_{\bt}$.)

This torus action has a two-dimensional kernel (e.g., all weights are
functions of $A+B$, $A+x_i-y_1$, $y_1-y_j$ only -- a remark which we
shall use in \S\ref{ssec:nongeneric}), but it is convenient to keep
this redundant action.

Our objective is to provide multiple combinatorial formul\ae\ for the
equivariant cohomology classes (also known as multidegrees) of the $E_\pi$.


\subsubsection{Degeneration}
Our main theorem will be
\begin{thm}\label{thm:main}\ 
  \begin{enumerate}
  \item For each hybridization $\beta$, there is a 
    degeneration of each $E_\pi$ into a union of varieties indexed by
    hybrid generic pipe dreams of type $\beta$ with connectivity $\pi$
    (possibly, though conjecturally not, with some embedded components
    of lower dimension)
    in such a way that the contribution of a pipe dream to $G_\pi$ is
    $(A+B)^m$ times the $H^*_T$ class of the corresponding variety.
  \item $G_\pi$ equals $(A+B)^m$ times the $H_T^*$ class of $E_\pi$.
    In particular it is independent of $\beta$.
  \end{enumerate}
\end{thm}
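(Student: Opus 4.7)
The plan is to deduce part (2) first from the combinatorial recurrence of Theorem \ref{thm:ind}, and then to prove the stronger part (1) by constructing, for each hybridization $\beta$, a one-parameter torus-equivariant degeneration of $E_\pi$ whose irreducible components are indexed by hybrid generic pipe dreams. Part (1) is strictly stronger than part (2), but the recurrence-based route to (2) is self-contained and bypasses the scheme-theoretic complications that arise in the degeneration.

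For part (2), I would set $f_\pi := (A+B)^{-m}[E_\pi]_T \in \ZZ[x_1,\ldots,x_m,y_1,\ldots,y_n,A,B]$ and show that $\{f_\pi\}_{\pi\in\SS_{m,n}}$ satisfies $f_{\pi r_i}=((A+B)\partial_i - r_i)\,f_\pi$ whenever $\ell(\pi r_i)>\ell(\pi)$. Since $A+B$ is fixed by $r_i$ and commutes past $\partial_i$, Theorem \ref{thm:ind} for $G_\pi$ translates to the same recurrence for $G_\pi/(A+B)^m$, so it suffices to match $f_\pi$ with $G_\pi/(A+B)^m$ at a single base permutation $\pi_0$ chosen so that $E_{\pi_0}$ is a complete intersection whose class can be computed directly. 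Geometrically, the recurrence on $[E_\pi]$ should come from a $\PP^1$-bundle construction: form the parabolic $P_i\supset B_-$ attached to $r_i$, build a Bott--Samelson-type family interpolating between $E_\pi$ and $E_{\pi r_i}$, and push forward along the $\PP^1$-fiber to produce the operator $(A+B)\partial_i - r_i$; the ``dynamical'' correction $-r_i$ (rather than pure $\partial_i$) reflects the fact that $E_\pi$ is a symplectic/conormal deformation of $\barX_\pi$ and not a matrix Schubert variety itself.

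For part (1), I would pick a cocharacter $\lambda_\beta\colon\Gm\to T$ adapted to $\beta$ and take the flat limit $\lim_{t\to 0}\lambda_\beta(t)\cdot E_\pi$. The cocharacter should be tuned so that in each row $i$ of type $\tp$ the weight of $X_{ij}$ dominates that of $Y_{ji}$, and conversely in rows of type $\bt$, so that the defining quadratic relations $(XY)_{ij}=0$ and $(YX)_{ij}=0$ break up initial-term by initial-term into products of linear factors. The irreducible components of the flat limit are indexed by the choice of leading factor at each grid site, and the consistency conditions needed for a nonempty component are exactly the pipe-dream tiling rules; this is the content of the ``flux'' viewpoint of \S\ref{sssec:flux}, which reverse-engineers the definition of pipe dream itself from the degeneration. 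Each component is a complete intersection of linear and quadratic equations in $\Mat(m,n,\CC)\times\Mat(n,m,\CC)$, and its $T$-equivariant class is the product of the weights of the straight/blank hyperplanes $X_{ij}=0$ or $Y_{ji}=0$ and of the surviving quadratic diagonal factors $(XY)_{ii}$ or $(YX)_{jj}$ (elbow tiles), matching the tile contributions of the pipe dream up to the overall factor $(A+B)^m$ accounting for the $m$ mandatory pipe-turn elbows.

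The main obstacle is scheme-theoretic control of the flat limit: as the statement parenthetically warns, embedded components of lower dimension may appear, and one must argue either that they do not affect the class (since strictly lower-dimensional embedded components contribute $0$ to $[E_\pi]_T$), or that the top-dimensional components are generically reduced and are exactly the advertised complete intersections. This is the essential technical novelty beyond \cite{KM-Schubert}, where the components of the degeneration of a matrix Schubert variety are coordinate subspaces and reducedness is classical; here the components are genuinely quadratic, and generic reducedness requires either an explicit Gr\"obner-basis argument or a multiplicity argument leveraging the already-established part (2) to force each top-dimensional multiplicity to be $1$. Once that is in place, summing the components' classes recovers $(A+B)^{-m}G_\pi$ and, as a byproduct, gives an independent proof of part (2).
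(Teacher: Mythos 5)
Your part (2) follows exactly the paper's route: Theorem \ref{thm:ind2} establishes the recurrence for $[E_\pi]$ geometrically via the $GL_2$-sweep of Lemma \ref{lem:divdiff} (your ``$\PP^1$-bundle pushforward''), with base case the decreasing permutation, for which $E_\pi$ is a linear subspace; comparison with Theorem \ref{thm:ind} then gives Corollary \ref{cor:main}. The one substantive point you elide is that $Z=GL_2\cdot E_\pi$ leaves the lower-upper scheme (the function $(XY)_{i,i+1}$ becomes nonzero on it), so one must cut back by the Cartier divisor $\{(XY)_{i,i+1}=0\}$ --- whose weight $A+B+x_i-x_{i+1}$ is precisely your ``dynamical'' correction factor --- and then use equidimensionality and generic reducedness of $E$ (Theorem \ref{thm:Eequidim}, Lemma \ref{lem:genred}) to identify the result with $E_\pi\cup E_{\pi r_i}$ up to embedded components.

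For part (1) your architecture (torus degeneration adapted to $\beta$, components read off from fluxes, positivity plus part (2) to kill excess multiplicity) matches the paper, but the degeneration as you describe it would fail. A cocharacter making $\wt(X_{ij})$ dominate $\wt(Y_{ji})$ row-uniformly --- say $\wt(X_{ij})=1$, $\wt(Y_{ji})=-1$ on a $\tp$ row --- leaves every monomial of $(XY)_{ii'}$ and $(YX)_{jj'}$ at weight zero and degenerates nothing; the paper's weights on the row being processed are \emph{column-graded}, $\wt(X_{1j})=j$, $\wt(Y_{j1})=-j$, and the degeneration is performed one row at a time precisely so as to induct on $m$ via the smaller scheme $E'_{\pi\backslash r}$. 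More importantly, the quadrics do not ``break up initial-term by initial-term into products of linear factors,'' and the components are not indexed by a choice of leading factor at each site: the initial ideal is not generated by the initial terms of the given generators. One needs the overlap (S-pair) relation \eqref{eq:degenflux}, $X_{1j}(\Phi_{\dS}-\Phi_{\dE})=0$, to drive the straight-versus-elbow case division, and for blank tiles the equation $Y_{j1}=0$ does not come from the quadrics at all but from degenerating the rank conditions of Proposition \ref{prop:eqs}\textit{(iv)} (the quadrics only give $X_{1j}^2Y_{j1}=0$, and only set-theoretically). Each resulting component is an iterated hyperbola bundle over $E'_{\pi\backslash r}$ (Lemma \ref{lem:Deqs}), not a union of linear pieces. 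Your closing multiplicity argument is, however, exactly the paper's: bound each $[\hat D_\pi^r]$ above by the product of tile weights, sum over compatible rows, compare with $(A+B)^{-m}G_\pi$ from part (2), and use positivity of the grading (\S\ref{ssec:pos}) to force all inequalities to be equalities.
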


Part \textit{(2)} follows immediately from part \textit{(1)}.
We shall nevertheless give a proof of \textit{(2)} which does not rely
on \textit{(1)}, but rather on the Yang--Baxter based results of the previous section:
finding a geometric interpretation of theorem \ref{thm:ind} for the first part,
and independence of hybridization by 
theorem \ref{thm:beta}.

In the special case $\beta=(\tp,\ldots,\tp)$ and $m=n$, parts
\textit{(1)} and \textit{(2)} reduce to results announced in
\cite{artic87} (Theorems 6 and 4 respectively), so the present paper
provides proofs for these theorems as well.

\junk{maybe worth pointing out that all the issues related with BPDs
  don't occur here (everything is [generically] reduced).
  Make reference to the projection. PZJ: my attempt}
Note that components of the degeneration are generically reduced, so
the nonreducedness issues in \cite{KleinWeigandt} occur
only if one projects $(X,Y)\mapsto X$ or $(X,Y)\mapsto Y$
before degenerating.

The strange-looking bonus factor of $(A+B)^m$ can be interpreted as follows:
if we cut down $E_\pi$ by imposing $\diag(XY)=0$, the resulting equivariant
cohomology class is $G_\pi$ on the nose.
In future work we will interpret that scheme as the ``characteristic cycle''
of an open matrix Schubert variety.

\subsubsection{Fluxes}\label{sssec:flux}
We now explain how one might {\em invent} pipe dreams to index the
components of the degeneration, had they not been discovered by other means.

Denote by $V(i,j)$ the vertical edge at row $i=1,\ldots,m$ and column $j=0,\ldots,n$;
and by $H(i,j)$ the horizontal edge at row $i=0,\ldots,m$ and column $j=1,\ldots,n$.
We define \defn{flux variables} across these edges by the following:
\begin{itemize}
\item On each blank edge on the West, East, or South sides, the flux is zero.
  (The same will be true on the North side, but we don't require it
  in advance.)
\item As we move away from these outer edges, passing through the square
  in the row {\em labeled} $r$ and in {\em physical} column $j$, 
  the flux increases by $X_{rj} Y_{jr}$.
\end{itemize}
In formul\ae, 
\begin{alignat*}{2}
\Phi_{V(i,j)} &:= \sum_{\substack{j'>j\text{ row }\tp\\j'\le j\text{ row }\bt}} X_{\varphi(i)j'}Y_{j'\varphi(i)}
\qquad
&&\begin{aligned} i&=1,\ldots,m\\ j&=0,\ldots,n
\end{aligned}
\\
  \Phi_{H(i,j)} &:= \sum_{i'>i} X_{\varphi(i')j}Y_{j\varphi(i')}
\qquad&&\begin{aligned} i&=0,\ldots,m\\ j&=1,\ldots,n
\end{aligned}
\end{alignat*}
Pictorially,
\begin{center}
  \begin{tikzpicture}[scale=0.5]
    \fill[\linkpatternboxcolor] (3,3) rectangle (5,4);
    \node at (-0.5,3.5) {$\tp$}; \node at (6.5,3.5) {$\Phi_{V(2,3)}$};
    \draw[ultra thick] (3,3) -- (3,4);
    \fill[\linkpatternboxcolor] (0,2) rectangle (2,3);
    \node at (-0.5,2.5) {$\bt$}; \node at (-2.5,2.5) {$\Phi_{V(3,2)}$};
    \draw[ultra thick] (2,2) -- (2,3);
    \fill[\linkpatternboxcolor] (3,2) rectangle (4,0); \node at (6.5,1) {$\Phi_{H(3,4)}$};
    \draw[ultra thick] (3,2) -- (4,2);    
    \draw (0,0) grid (5,5);
  \end{tikzpicture}
\end{center}
where each colored box at location $(i,j)$ stands for the summand
$X_{\varphi(i)j}Y_{j\varphi(i)}$. With such definitions it is trivial to show
that at each square, a flux conservation holds:
$$
\begin{matrix}
  \text{In $\tp$ rows:} & \qquad\qquad &   \text{In $\bt$ rows:} \\
  \begin{tikzpicture}[baseline=-3pt]
  {\setlength{\loopcellsize}{1.25cm}\plaq{}}
\node[shape=isosceles triangle,shape border rotate=90,inner sep=2pt,fill=blue,label={above:$\Phi_{\dN}$}] at (plaq.north) {};
\node[shape=isosceles triangle,shape border rotate=90,inner sep=2pt,fill=blue,label={below:$\Phi_{\dS}$}] at (plaq.south) {};
\node[shape=isosceles triangle,shape border rotate=0,inner sep=2pt,fill=blue,label={right:$\Phi_{\dE}$}] at (plaq.east) {};
\node[shape=isosceles triangle,shape border rotate=0,inner sep=2pt,fill=blue,label={left:$\Phi_{\dW}$}] at (plaq.west) {};
\end{tikzpicture}
& &
\begin{tikzpicture}[baseline=-3pt]
  {\setlength{\loopcellsize}{1.25cm}\plaq{}}
\node[shape=isosceles triangle,shape border rotate=90,inner sep=2pt,fill=blue,label={above:$\Phi_\dN$}] at (plaq.north) {};
\node[shape=isosceles triangle,shape border rotate=90,inner sep=2pt,fill=blue,label={below:$\Phi_\dS$}] at (plaq.south) {};
\node[shape=isosceles triangle,shape border rotate=180,inner sep=2pt,fill=blue,label={right:$\Phi_\dE$}] at (plaq.east) {};
\node[shape=isosceles triangle,shape border rotate=180,inner sep=2pt,fill=blue,label={left:$\Phi_\dW$}] at (plaq.west) {};
\end{tikzpicture}
  \\
\Phi_\dW+\Phi_\dS=\Phi_\dE+\Phi_\dN
& &
\Phi_\dE+\Phi_\dS=\Phi_\dW+\Phi_\dN
\end{matrix}
$$

\begin{prop}[see theorem \ref{thm:mainfull}]\label{prop:flux}
  Consider the degeneration of $E_\pi$ from theorem \ref{thm:main}.
  Given a geometric component of it, defined by some ideal $I$, draw a pipe
  dream as follows: if two edges on a square have the same flux${}\bmod I$,
  and that flux is not $0\bmod I$, connect the two edges with a pipe.
  Then the result is a hybrid generic pipe dream with connectivity $\pi$,
  and this recipe
  gives a correspondence between geometric components and such HGPDs.
\end{prop}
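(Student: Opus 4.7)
The plan is to invoke theorem \ref{thm:main}(1), which already establishes a bijection between geometric components of the degeneration and HGPDs with connectivity $\pi$. It then suffices to verify that the flux recipe, applied to the component $Z_D$ associated to a given HGPD $D$, reproduces $D$ itself; the facts that the recipe always outputs a valid HGPD with the correct $\pi$, and that distinct components give distinct HGPDs, follow automatically.

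Using the explicit description of $Z_D$ from the proof of theorem \ref{thm:main}, I would first read off cell-by-cell which of the products $X_{\varphi(i)j}Y_{j\varphi(i)}$ are generic nonzero quantities $\xi_{ij}$ modulo the defining ideal $I$, and which vanish -- an assignment dictated by the tile type (and the pipe structure, in the relevant sense) at $(i,j)$ in $D$. Substituting into the defining sums for $\Phi_V$ and $\Phi_H$ then expresses every internal flux mod $I$ as a subset-sum of the surviving $\xi_{ij}$.

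Next, using the local conservation laws $\Phi_W+\Phi_S=\Phi_E+\Phi_N$ (in $\tp$ rows) and $\Phi_E+\Phi_S=\Phi_W+\Phi_N$ (in $\bt$ rows), together with the algebraic independence modulo $I$ of the $\xi_{ij}$ that remain nonzero, a short case analysis over the finite list of tile types in each row type verifies that on each square the pairs of edges carrying equal nonzero flux mod $I$ are exactly those joined by a pipe in the tile of $D$ at that square. Chaining these local equalities across the grid, the recipe produces connected paths; that they match the pipes of $D$ with the correct connectivity $\pi$ is ensured by the vanishing of the flux on three of the four boundary sides by construction, while the top-edge fluxes encode the labels of the pipes exiting northward.

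The main obstacle is establishing the algebraic independence of the generic parameters $\xi_{ij}$ modulo $I$, which is what rules out spurious flux coincidences not already forced by the tile combinatorics (for instance, a blank tile adjacent to an elbow cell could a priori pick up a matching nonzero flux on two of its edges, and one must check that the defining equations of $Z_D$ exclude this). This amounts to the geometric statement that $Z_D$ is generically reduced of the expected dimension with the surviving $X_{rj}Y_{jr}$ serving as free parameters, a fact I would take from the explicit tile-by-tile construction of the degeneration that accompanies theorem \ref{thm:main}.
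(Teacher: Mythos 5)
Your overall strategy --- pass to the explicit presentation of the component attached to a pipe dream $\delta$ and check that the flux recipe recovers $\delta$ --- is the same as the paper's, which proves this proposition by folding it into theorem \ref{thm:mainfull}, where the congruences $\Phi_e\equiv t_i$ or $0 \bmod I$ appear among the defining equations of $V_\delta$. The problem is the technical lemma you lean on: the ``algebraic independence modulo $I$ of the $\xi_{ij}$ that remain nonzero'' is false, and is in fact the opposite of what the argument needs. Already in the $m=n=2$, $\pi=12$ example of \S\ref{sec:intro}, the ideal of the component with three elbow tiles contains $x_{21}y_{12}-x_{12}y_{21}$, identifying the products at two distinct elbow cells. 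Such relations are unavoidable: the flux entering row $i$ from the side is the row sum $(XY)_{ii}=\sum_j X_{ij}Y_{ji}$, while the flux exiting column $j$ at the top is the column sum $(YX)_{jj}=\sum_i X_{ij}Y_{ji}$, and your recipe only joins the two ends of a pipe when these two \emph{different} subset-sums of the $\xi_{ij}$ agree mod $I$. If the surviving products were independent, the recipe would fail to produce connected pipes at all; the relations among them are precisely what propagate a single common value $t_i$ along each pipe. So the independence claim cannot be ``taken from the construction'' --- the construction refutes it.

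The input you actually need, and which theorem \ref{thm:mainfull} supplies, is: (a) on $V_\delta$ every edge flux is congruent mod $I$ to $t_i=(XY)_{ii}$ if pipe $i$ passes through that edge and to $0$ otherwise --- these congruences are derived inductively from the equations of lemma \ref{lem:Deqs} and proposition \ref{prop:degenb}, not from any independence statement; and (b) the $t_1,\ldots,t_m$ are pairwise distinct and nonzero at the generic point of $V_\delta$, because $V_\delta$ is the closure of an iterated fiber bundle over the open loci of lemma \ref{lem:flux}, on which the diagonal entries of $XY$ are distinct and nonzero. Point (b) is what rules out spurious coincidences such as your blank-tile-next-to-an-elbow scenario; independence of the $\xi_{ij}$ is neither true nor the relevant condition. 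With (a) and (b) substituted for your independence lemma, your tile-by-tile case analysis and the chaining argument go through and match the paper's reasoning.
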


\newcommand{\mygrid}[9]{
\begin{tikzpicture}[baseline=(current bounding box.center),yscale=1.6,xscale=.7]
  \def\hs{#1};
  \draw[lightgray,use as bounding box] (0,0) -- (\hs,0) -- (2*\hs,0) -- (2*\hs,1) -- (\hs,1) -- (0,1) -- (0,2) -- (\hs,2) -- (2*\hs,2) -- (2*\hs,1) -- (\hs,1) -- (\hs,2) -- (\hs,0) -- (0,0) -- (0,1);
  \node at (.5*\hs,2) {$#2$};
  \node at (1.5*\hs,2) {$#3$};
  \node at (0*\hs,1.5) {$#4$};
  \node at (1*\hs,1.5) {$#5$};
  \node at (2*\hs,1.5) {$0$};
  \node at (.5*\hs,1) {$#6$};
  \node at (1.5*\hs,1) {$#7$};
  \node at (0*\hs,.5) {$0$};
  \node at (1*\hs,.5) {$#8$};
  \node at (2*\hs,.5) {$#9$};
  \node at (.5*\hs,0) {$0$};
  \node at (1.5*\hs,0) {$0$};
\end{tikzpicture}
}

\begin{ex*}
We go through the details of the very simple case $m=n=2$, $\pi=12$.
The ideal is
$$ \left\langle
  x_{11}y_{21} + \ul{x_{21}y_{22}},\
  x_{11}y_{12} + \ul{x_{12}y_{22}},\
  \ul{x_{21}y_{12} - x_{12}y_{21}},\
  \ul{x_{12}y_{21} - x_{21}y_{12}}     \right\rangle
$$
which, for $\beta = (\tp,\bt)$, keeping only the underlined terms, degenerates to
$
\left\langle x_{21},\ x_{12} \right\rangle \cap
\left\langle y_{22},\ x_{21}y_{12}-x_{12}y_{21}\right\rangle 
$.

The fluxes are
\junk{
$$
\begin{matrix}
  && x_{11}y_{11} + x_{21}y_{21} && x_{21}y_{21}+x_{22}y_{22} \\
   & x_{11}y_{11}+x_{12}y_{21} && x_{12}y_{21} && 0 \\
  && x_{21}y_{12} && x_{22}y_{22} \\
  & 0 && x_{21}y_{12} && x_{21}y_{12}+x_{22}y_{22} &  \\
  && 0 && 0
\end{matrix}
$$
}
\begin{align*}
& \mygrid 4
  {x_{11}y_{11} + x_{21}y_{21}} {x_{21}y_{21}+x_{22}y_{22}}
   {x_{11}y_{11}+x_{12}y_{21}\,} {x_{12}y_{21}}
  {x_{21}y_{12} }{ x_{22}y_{22} }
  { x_{21}y_{12} }{ x_{21}y_{12}+x_{22}y_{22}\, }
\\
\intertext{Modulo $\left\langle x_{21},\ x_{12} \right\rangle$, the fluxes become}
\junk{
\begin{matrix}
  && x_{11}y_{11}  && x_{22}y_{22} \\
   & x_{11}y_{11} && 0 && 0 \\
  && 0 && x_{22}y_{22} \\
  & 0 && 0 && x_{22}y_{22} &  \\
  && 0 && 0
\end{matrix}
}
&\mygrid 4
  { x_{11}y_{11}} { x_{22}y_{22} }
   { x_{11}y_{11}} 0
   0 {x_{22}y_{22} }
   0 {x_{22}y_{22} }
&& \text{producing} &&
\begin{tikzpicture}[baseline=(current bounding box.center),ampersand replacement=\&]
  \node[loop]{
    \plaq{j}\&\plaq{v}\\
    \plaq{} \&\plaq{i}\\};
\path (loop-1-1.west) node[left] {$1$};
\path (loop-2-2.east) node[right] {$2$};
\end{tikzpicture}
\\
\intertext{whereas modulo $\left\langle y_{22},\ x_{21}y_{12}-x_{12}y_{21}\right\rangle$, they become}
\junk{
\begin{matrix}
  && x_{11}y_{11} + x_{21}y_{21} && x_{21}y_{21} \\
   & x_{11}y_{11} + x_{21}y_{12} && x_{21}y_{12} && 0 \\
  && x_{21}y_{12} &&  \\
  & 0 && x_{21}y_{12} && x_{21}y_{12} &  \\
  && 0 && 0
\end{matrix}
}
&\mygrid 4
  {x_{11}y_{11} + x_{21}y_{21}}{ x_{21}y_{21} }
   { x_{11}y_{11} + x_{21}y_{12}\, }{ x_{21}y_{12} }
  { x_{21}y_{12} } 0
  { x_{21}y_{12} }{ x_{21}y_{12} }
&& \text{producing} &&
\begin{tikzpicture}[baseline=(current bounding box.center),ampersand replacement=\&]
  \node[loop]{
    \plaq{a}\&\plaq{j}\\
    \plaq{i}\&\plaq{h}\\};
\path (loop-1-1.west) node[left] {$1$};
\path (loop-2-2.east) node[right] {$2.$};
\end{tikzpicture}
\end{align*}
\end{ex*}

\section{The rectangular lower-upper scheme}\label{sec:lowerupper}

We now study the geometry of the rectangular lower upper-scheme
\[
  E := \left\{ (X,Y) \in \Mat(m,n,\CC)\times \Mat(n,m,\CC):\  X Y\text{ lower triangular, } Y X\text{ upper triangular}\right\}
\]
Recall that $B_-$ (resp.\ $B_+$) is the group of invertible lower-
(resp.\ upper-) triangular $m\times m$ (resp.\ $n\times n$)
matrices. We define similarly their unipotent radicals $N_-$ and
$N_+$.  Denote by $p_i$, $i=1,2$, the projection from
$E\subseteq\Mat(m,n,\CC)\times \Mat(n,m,\CC)$ to its $i^{\rm th}$ factor.
The group $B_-\times B_+$ acts on $E$ and on each factor.

\subsection{Components of the lower-upper scheme}
\begin{lem}\label{lem:dim}
  Given an $m\times n$ partial permutation matrix $\pi$, define
  $E^1_\pi:=p_1^{-1}((B_-\times B_+)\cdot\pi)$ and
  $E^2_\pi:=p_2^{-1}((B_-\times B_+)\cdot\pi^T)$.
  Then $E^1_\pi$ and $E^2_\pi$ are smooth and irreducible,
  of dimension $mn+\rank \pi$.
\end{lem}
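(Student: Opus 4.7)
The plan is to exhibit $p_1 \colon E^1_\pi \to O_\pi := (B_- \times B_+)\cdot \pi$ as a vector bundle with fiber the linear subspace $F_\pi := p_1^{-1}(\pi) \subset \Mat(n,m,\CC)$. Then $E^1_\pi$, being a vector bundle over a smooth irreducible base with smooth irreducible fiber, is automatically smooth and irreducible of dimension $\dim O_\pi + \dim F_\pi$.

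The base $O_\pi$ is a homogeneous space for the connected group $B_- \times B_+$, hence smooth and irreducible; $p_1$ is $B_-\times B_+$-equivariant since the defining conditions $XY$ lower triangular and $YX$ upper triangular of $E$ are $B_-\times B_+$-invariant. The fiber $F_\pi = \{Y \in \Mat(n,m,\CC) : \pi Y \text{ lower triangular},\ Y\pi \text{ upper triangular}\}$ is a linear subspace of $\Mat(n,m,\CC)$, since its equations are linear in $Y$ and satisfied by $Y=0$. By equivariance and transitivity of the action on $O_\pi$, every fiber $p_1^{-1}(b_-\pi b_+^{-1})$ equals the linear translate $b_+ F_\pi b_-^{-1}$ of $F_\pi$; this constant-fiber structure gives the desired vector bundle, and in particular $\dim E^1_\pi = \dim O_\pi + \dim F_\pi$.

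To evaluate this sum I would unpack $F_\pi$ coordinatewise: it is cut out inside $\Mat(n,m,\CC)$ by the vanishing of entries $Y_{\pi(i),k}$ for $i$ in the domain of $\pi$ and $i<k\leq m$, together with entries $Y_{j,k}$ for $k$ in the domain of $\pi$ and $\pi(k)<j\leq n$; the overlap between these two families consists of entries indexed by inversions of $\pi$. Combining an inclusion-exclusion count with the classical codimension formula for the matrix Schubert variety $\overline{O_\pi}$ then yields $\dim F_\pi + \dim O_\pi = mn + \rank\pi$. The analogous assertion for $E^2_\pi$ follows by the mirror argument swapping the roles of $X$ and $Y$ and the two triangularity types; alternatively, the involution $(X,Y)\mapsto (\gamma_m Y^T \gamma_n,\gamma_n X^T \gamma_m)$ of $E$ carries $E^1_\pi$ isomorphically onto $E^2_{\gamma_m \pi \gamma_n}$, reducing the second case to the first. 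The main obstacle is the dimension bookkeeping: verifying that the inclusion-exclusion count for $\dim F_\pi$ (with overlaps indexed by inversions of $\pi$) combines with the known codimension of $\overline{O_\pi}$ in $\Mat(m,n,\CC)$ into exactly $mn + \rank\pi$.
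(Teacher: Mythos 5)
Your proposal is correct and follows essentially the same route as the paper, which simply defers to \cite[lemma~1]{Kn-uu}: exhibit $E^1_\pi$ as a homogeneous (vector) bundle over the orbit $(B_-\times B_+)\cdot\pi$ with linear fiber $F_\pi=p_1^{-1}(\pi)$, then add dimensions. The bookkeeping you defer does close: with $r=\rank\pi$, the orbit--stabilizer count gives $\dim O_\pi = r^2-\mathrm{inv}(\pi)+(\text{two boundary sums})$, and after the cancellations with your inclusion--exclusion for $\dim F_\pi$ one is left with $mn+r^2-2\binom r2=mn+r$.
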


\begin{proof}
  The proof is identical to that of \cite[lemma~1]{Kn-uu}, and we skip it.
\end{proof}

\begin{lem}\label{lem:dense}
If $\pi$ is maximal rank, then the set
\[
  E^\circ_\pi:=(N_-\times N_+)\cdot\left\{ (s\pi, \pi^T t),\ s,t\ m\times m \text{ diagonal},\ st\text{ invertible with
    distinct eigenvalues}\right\}
\]
is an open dense subset of $E^1_\pi$ and of $E^2_\pi$.
\end{lem}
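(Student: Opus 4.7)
The plan is to show that $E^\circ_\pi$ equals $E^1_\pi\cap U$ (and $E^2_\pi\cap U$), where $U:=\{(X,Y):\diag(XY)\text{ has distinct nonzero entries}\}$ is an open subvariety of $\Mat(m,n,\CC)\times\Mat(n,m,\CC)$. This characterization makes openness automatic, and density then follows from the irreducibility of $E^1_\pi$ and $E^2_\pi$ (Lemma \ref{lem:dim}) together with the nonemptiness of $E^\circ_\pi$.

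First I would verify the easy inclusion $E^\circ_\pi\subseteq (E^1_\pi\cap E^2_\pi)\cap U$. For the base representative $(s\pi,\pi^Tt)$, the identity $\pi\pi^T=I_m$ (valid since $\pi$ has maximal rank) gives $XY=st$ and $YX=\pi^Tts\pi$, both diagonal, so $(s\pi,\pi^Tt)\in E$; the $X$-projection is in $B_-\pi B_+$ and the $Y$-projection in $B_+\pi^T B_-$, so $(s\pi,\pi^Tt)\in E^1_\pi\cap E^2_\pi$. The $(N_-\times N_+)$-action preserves these memberships. I would also check that $E^\circ_\pi$ is stable under the full $B_-\times B_+$ action: writing $B_-=N_-T$ and $B_+=TN_+$, the diagonal part of any element absorbs into $(s,t)$, and since diagonal matrices commute the product $st$ is unchanged, preserving the distinct-nonzero-eigenvalue condition. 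Because $XY$ transforms as $bXYb^{-1}$ under $(b,c)\in B_-\times B_+$ and the diagonal of a lower-triangular matrix is invariant under unipotent lower-triangular conjugation, $\diag(XY)$ equals $st$ throughout $E^\circ_\pi$; hence $E^\circ_\pi\subseteq U$.

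The substantive content is the reverse inclusion $E^1_\pi\cap U\subseteq E^\circ_\pi$. Using $(B_-\times B_+)$-equivariance of both sides, I would reduce to the fiber $p_1^{-1}(\pi)$, which is the linear subspace of $Y\in\Mat(n,m,\CC)$ cut out by the support conditions coming from $\pi Y$ lower triangular and $Y\pi$ upper triangular. Given $Y$ in this fiber with $Y_{\pi(i),i}$ distinct and nonzero, I would set $t_i:=Y_{\pi(i),i}$ and solve triangularly for $(u,v)\in N_-\times N_+$ satisfying $u\pi=\pi v$ and $Y=v\pi^Ttu^{-1}$. The equation $u\pi=\pi v$ identifies entries $u_{ik}$ (for $i>k$) with entries $v_{\pi(i),\pi(k)}$ of $v$; upper-triangularity of $v$ then forces $u_{ik}=0$ whenever $\pi(i)>\pi(k)$, and the remaining free entries of $u$ are read off from $Y$ via $Y=v\pi^Ttu^{-1}$. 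The support conditions defining the fiber guarantee that the resulting triangular system is consistent. The analogue for $E^2_\pi$ follows by an entirely symmetric argument with the roles of $X$ and $Y$ swapped.

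The main obstacle is this triangular reconstruction: one must check that, for each index pair $(i,k)$, the vanishings forced by lower-/upper-triangularity of $u$ and $v$ line up with the support conditions on $Y$ in the fiber. This is an index bookkeeping exercise controlled by the relative orders of $i,k$ versus $\pi(i),\pi(k)$, which is carried out for $m=n$ in the proof of \cite[lemma~1]{Kn-uu}; the rectangular case extends with essentially no change.
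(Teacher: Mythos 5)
Your route is genuinely different from the paper's, and if completed it would prove slightly more. The paper computes the infinitesimal stabilizer of $(s\pi,\pi^T t)$ under $N_-\times N_+$ and concludes by a dimension count that $E^\circ_\pi$ has full dimension inside the irreducible $E^1_\pi$, hence is dense (openness is not really addressed there; it follows from the later lemma~\ref{lem:flux}(1)). You instead propose the identity $E^\circ_\pi=E^1_\pi\cap U$ with $U$ explicitly open, which gives openness and density in one stroke. That identity is in fact true, and your easy direction and the reduction to the fiber $p_1^{-1}(\pi)$ are correct: on that fiber the support conditions give $(YX)_{\pi(i)\pi(i)}=Y_{\pi(i)i}=(XY)_{ii}$, so your condition ``$\diag(XY)$ distinct nonzero'' really does place the point in the set characterized by lemma~\ref{lem:flux}(1), which is the paper's own (later, diagonalization-based) proof of the inclusion you need.

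The gap is at exactly the step you flag as the main obstacle, and your stated reason for why it works is wrong. Solvability of $Y=v\pi^T t u^{-1}$ subject to $u\pi=\pi v$ is \emph{not} guaranteed by ``the support conditions defining the fiber''; it requires the $t_i$ to be distinct and nonzero. For $m=n=2$ and $\pi=\left(\begin{smallmatrix}0&1\\1&0\end{smallmatrix}\right)$, the fiber over $\pi$ is $\{Y: Y_{22}=0\}$, the only unknown is $a=u_{21}=v_{12}$, and the reconstruction yields $Y_{11}=a(t_1-t_2)$ with $t_1=Y_{21}$, $t_2=Y_{12}$: the system is solvable for all $Y$ in the fiber precisely because $t_1\ne t_2$. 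In general the linear system for the free off-diagonal entries of $u$ and $v$ has diagonal coefficients of the form $t_i-t_k$ and $t_i$, and it is the \emph{invertibility} of that system — not its consistency — that uses the hypothesis; with repeated or zero $t_i$ the orbit genuinely misses part of the fiber. Relatedly, the citation of \cite[lemma~1]{Kn-uu} for this step is misplaced: that lemma (quoted here as lemma~\ref{lem:dim}) establishes the linear-fiber-bundle structure of $E^1_\pi$ over $B_-\pi B_+$, i.e.\ the description of the fiber itself, not the surjectivity of the orbit parametrization onto its open locus. To repair the proof, either carry out the triangular solve and exhibit where $t_i\ne t_j$ and $t_i\ne 0$ invert the system, or replace the reconstruction by the diagonalization argument of lemma~\ref{lem:flux}(1): conjugate the triangular matrices $XY$ and $YX$ to diagonal form by elements of $B_-$ and $B_+$ and read off that the resulting $X'$, $Y'$ are supported on $\pi$, $\pi^T$.
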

\begin{proof}
It is clear that $E_\pi^\circ\subset E_\pi^1 \cap E_\pi^2$.
Computing the infinitesimal stabilizer of $(s\pi, \pi^Tt)$ under the
$N_-\times N_+$ action leads to
\[
A_- s \pi - s \pi A_+ = 0,\qquad A_+ \pi^T t - \pi^T t A_- = 0
\]
where $A_-$ is $m\times m$ strictly lower triangular, and $A_+$ is $n\times n$ strictly upper triangular.

If $\pi$ is maximal rank, $\pi\pi^T=1$, so
\[
  A_-st=A_-s\pi\pi^Tt=s \pi A_+ \pi^T t = s \pi\pi^T t A_- = st A_-
\]
hence $A_-$ commutes with $st$. Since $st$ has distinct eigenvalues,
$A_-$ is diagonal, therefore zero.  In turn this implies
\[
  \pi A_+= A_+ \pi^T=0
\]
which means that the rows and columns of $A_+$ corresponding to the
$1$s of $\pi$ are zero; removing them leaves a $(n-m)\times (n-m)$
strictly upper triangular matrix whose entries are free.

All together, the dimension of 
$(N_-\times N_+)\cdot\left\{ (s\pi, \pi^T t)\right\}$ is the dimension of the group plus the dimension of the set of
pairs of $m\times m$ diagonal matrices minus the dimension of the stabilizer,
which is
\[
m(m-1)/2+n(n-1)/2+2m-(n-m)(n-m-1)/2=m(n+1)
\]
which equals the dimension of $E^1_\pi$ and $E^2_\pi$,
giving the density statement.
\end{proof}

In particular, we conclude that $\dim E=m(n+1)$.

Note that the number of defining equations of $E$ is $m(m-1)/2+n(n-1)/2$;
this is only equal to its codimension $m(n-1)$ when $m=n$ or $m=n-1$; i.e., outside these two cases,
$E$ is not expected to be (in fact, isn't) a complete intersection.

\rem[gray]{a subtlety I forgot about: why isn't it a CI? why does the stochasticity argument for GPDs fail again?
related to modified vs not weights}

Define $E_\pi = \overline{E_\pi^1}=\overline{E_\pi^2}$ for $\pi\in \SS_{m,n}$, that is $\pi$ of maximal rank.
These are the top-dimensional components
of $E$. In appendix~\ref{app:equi}, we prove that these are in fact all the irreducible components of $E$.

We know some of the equations defining $E_\pi$:
\begin{prop}\label{prop:eqs}
  The following equations are satisfied on $E_\pi$:
  \begin{enumerate}[(i)]
  \item $XY$ is lower triangular, $YX$ is upper triangular.
  \item $\diag(YX)=\pi^T \diag(XY)\pi$.
  \item For each $i,j$, the rank of the upper-left $i\times j$ submatrix of $X$ is
    less or equal to the number of $1$s in the same submatrix of $\pi$.
  \item For each $i,j$, the rank of the lower-right $j\times i$ submatrix of $Y$ is
    less or equal to the number of $1$s in the same submatrix of $\pi^T$.
  \end{enumerate}
  Moreover, \textit{(ii')} if $\rho(i)\neq j=\pi(i)$, then the equation
  $(YX)_{jj} = (XY)_{ii}$ does {\em not}
  hold on $E_\rho$. 
\end{prop}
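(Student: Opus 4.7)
The plan is to verify each of the listed equations by reducing to the dense open subset $E_\pi^\circ$ from Lemma~\ref{lem:dense}, and then invoke Proposition~\ref{prop:irr} for the rank statements.

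Part \textit{(i)} is immediate since $E_\pi \subseteq E$ by definition. For part \textit{(ii)}, I would work on $E_\pi^\circ$, starting from the special points $(s\pi, \pi^T t)$ with $s,t$ diagonal. Because $\pi$ has maximal rank, $\pi\pi^T = I_m$, so $XY = s\pi\pi^T t = st$ is diagonal, and
\[
(YX)_{jj} = (\pi^T ts \pi)_{jj} = \sum_a \pi_{aj}^2 (st)_{aa} = \begin{cases} (st)_{aa} & \text{if } \pi(a)=j\\ 0 & \text{otherwise}\end{cases}
\]
which is exactly the $(j,j)$-entry of $\pi^T \diag(XY)\pi$. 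To extend this identity from these special points to all of $E_\pi^\circ$, I note that under $(b,c)\cdot(X,Y) = (bXc^{-1}, cYb^{-1})$ one has $XY \mapsto b(XY)b^{-1}$ and $YX \mapsto c(YX)c^{-1}$; since $XY$, $YX$ are already triangular and $b\in N_-$, $c\in N_+$ are unipotent triangular, the diagonals of both matrices are preserved. Hence $\diag(YX) = \pi^T \diag(XY) \pi$ on the $(N_-\times N_+)$-orbit, which is dense in $E_\pi$, and therefore on all of $E_\pi$ by continuity.

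For parts \textit{(iii)} and \textit{(iv)}, I would invoke Proposition~\ref{prop:irr}: the projections are $p_1(E_\pi) = \barX_\pi = \overline{B_-\pi B_+}$ and $p_2(E_\pi) = \gamma_m \barX_{\gamma_m \pi^T \gamma_n}\gamma_n$. The standard rank conditions cutting out a rectangular matrix Schubert variety $\barX_\pi$ are exactly the bounds on ranks of upper-left $i\times j$ submatrices by the corresponding counts in $\pi$, which gives \textit{(iii)}. Conjugating by the longest elements replaces ``upper-left'' by ``lower-right'' and $\pi$ by $\pi^T$, giving \textit{(iv)}.

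For the negative statement \textit{(ii')}, I again work on the dense open $E_\rho^\circ$. At the special points $(s\rho,\rho^T t)$, the computation above gives $(XY)_{ii} = (st)_{ii}$ and
\[
(YX)_{jj} = \begin{cases} (st)_{i'i'} & \text{if } j=\rho(i')\text{ for some (unique) } i'\\ 0 & \text{if } j\notin\mathrm{im}(\rho)\end{cases}
\]
When $j=\pi(i)\ne \rho(i)$, either $i'\ne i$ exists with $\rho(i')=j$, in which case distinctness of the eigenvalues of $st$ forces $(YX)_{jj}\ne (XY)_{ii}$, or $j\notin\mathrm{im}(\rho)$, in which case $(YX)_{jj}=0$ while $(XY)_{ii}\ne 0$ since $st$ is invertible. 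So the equation $(YX)_{jj}=(XY)_{ii}$ fails generically on $E_\rho^\circ$, hence on $E_\rho$. The only mild subtlety in the whole proof is this last case analysis, together with the initial check that the $N_\pm$ action preserves the diagonals of $XY$ and $YX$; everything else is essentially a direct computation at the base points supplied by Lemma~\ref{lem:dense}.
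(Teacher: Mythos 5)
Your proof is correct and follows essentially the same route as the paper: verify each equation at the base points $(s\pi,\pi^T t)$ of Lemma~\ref{lem:dense}, extend by $N_-\times N_+$-invariance and density to all of $E_\pi$, and exhibit explicit points of $E_\rho^\circ$ where the equation in \textit{(ii')} fails. The only cosmetic difference is that for \textit{(iii)}--\textit{(iv)} you route through Proposition~\ref{prop:irr} and Fulton's rank description of $\barX_\pi$ rather than checking Borel-invariance of the rank conditions directly; this is harmless (no circularity, since Proposition~\ref{prop:irr} rests only on Lemmas~\ref{lem:dense} and \ref{lem:flux}(1)) and the paper itself invokes Proposition~\ref{prop:irr} for \textit{(ii')}.
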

\begin{proof}
  \textit{(i)} is simply the definition of $E$. \textit{(ii), (iii),
    (iv)} can be easily checked on $(s\pi,\pi^Tt)$ as in
  lemma~\ref{lem:dense}, and are manifestly $N_-\times N_+$ (in fact,
  $B_-\times B_+$) invariant, therefore hold on $E^\circ_\pi$ and its
  closure $E_\pi$.

  For \textit{(ii')}, we use the second description
  in proposition \ref{prop:irr} to find a point in $E_\rho$
  on which $(YX)_{jj} \neq (XY)_{ii}$.
\end{proof}

Unfortunately, these equations are in general not sufficient to cut out $E_\pi$
(as was stated as a conjecture in \cite[\S3]{Kn-uu} for $m=n$, but
even in this case, it fails at $n=4$ for the component indexed by
$\pi=1324$). \rem[gray]{one could be more explicit...}

\subsection{Flux equations}\label{ssec:flux}
In what follows, we'll be particularly interested in equations \textit{(ii)}. Let us write them more explicitly:
\begin{equation}\label{eq:flux}
  (YX)_{jj} = \begin{cases}
    (XY)_{ii}&\text{if }j=\pi(i),\qquad i=1,\ldots,m
    \\
    0&\text{else.}
  \end{cases}
\end{equation}
Here are two results that show their importance:

\begin{lem}\label{lem:flux}\ 
  Recall the set $E_\pi^\circ$ from lemma \ref{lem:dense}.
  \begin{enumerate}
  \item One has
    \[
      E^\circ_\pi = \left\{(X,Y) \in E\colon 
        (YX)_{\pi(i)\pi(i)}=
        (XY)_{ii}\text{ distinct nonzero},\ i=1,\ldots,m\right\}
    \]
  \item
    Let $\tilde E_\pi$
    be the variety defined by equations \textit{(i)} and \textit{(ii)} of proposition~\ref{prop:eqs}.\\
    $E_\pi\subseteq \tilde E_\pi \subseteq E$ (as affine schemes),
    and $\tilde E_\pi\backslash E_\pi$ is of dimension $<\dim E$.
  \end{enumerate}
\end{lem}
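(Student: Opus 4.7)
My plan is to prove parts \textit{(1)} and \textit{(2)} separately. Part \textit{(2)} will follow quickly from the structural results already established, while the substance sits in the reverse inclusion of part \textit{(1)}.

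For part \textit{(1)}, the inclusion $E^\circ_\pi \subseteq \{\cdots\}$ is a direct computation: on a point $(s\pi,\pi^T t)$ one has $XY = st$ (using $\pi\pi^T = I_m$), which is diagonal with distinct nonzero entries, and $YX = \pi^T t s\pi$ whose diagonal is supported at positions $(\pi(i),\pi(i))$ with value $s_i t_i = (XY)_{ii}$. These diagonals are preserved under $N_-\times N_+$-conjugation, so the conditions persist on the entire orbit.

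For the reverse inclusion I would proceed in two stages. Since $XY$ is lower triangular with distinct diagonal entries, a unique $b \in N_-$ makes $bXYb^{-1}$ diagonal; replacing $(X,Y)$ by $(bX, Yb^{-1})$, one may assume $XY = D$ with the $D_{ii}$ distinct nonzero. The identity $X(YX) = (XY)X = DX$ shows that each row of $X$ is a left-eigenvector of $YX$ for eigenvalue $D_{ii}$; since $YX$ is upper triangular with simple nonzero eigenvalues at diagonal positions $\pi(1),\ldots,\pi(m)$ and zeros elsewhere, induction on the eigenvector equation forces such a left-eigenvector to be supported on columns $\geq \pi(i)$ with nonzero entry at column $\pi(i)$. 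The symmetric argument using $(YX)Y = YD$ shows column $i$ of $Y$ is supported on rows $\leq \pi(i)$ and nonzero at row $\pi(i)$. In the second stage I pick the unique $c \in N_+$ solving the triangular column-operation system making $(Xc^{-1})_{ij} = 0$ for $j > \pi(i)$, so that $Xc^{-1} = s\pi$; the identity $(Xc^{-1})(cY) = XY = D$, combined with the support pattern on $Y$, then forces $cY = \pi^T t$, with $st = D$ invertible of distinct spectrum.

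For part \textit{(2)}, the inclusions $E_\pi \subseteq \tilde E_\pi \subseteq E$ are immediate from parts \textit{(i)} and \textit{(ii)} of proposition~\ref{prop:eqs}. Using the appendix result that the irreducible components of $E$ are exactly the $E_\rho$ for $\rho \in \SS_{m,n}$, I decompose $\tilde E_\pi = \bigcup_\rho (\tilde E_\pi \cap E_\rho)$. For $\rho = \pi$ this piece equals $E_\pi$. For $\rho \neq \pi$, part \textit{(ii')} of proposition~\ref{prop:eqs} furnishes an equation defining $\tilde E_\pi$ that fails identically on $E_\rho$; by irreducibility of $E_\rho$, the intersection $\tilde E_\pi \cap E_\rho$ is then a proper closed subvariety of dimension strictly less than $\dim E_\rho \leq m(n+1) = \dim E$. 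Hence every irreducible component of $\tilde E_\pi$ not contained in $E_\pi$ has dimension $< \dim E$, giving the bound on $\dim(\tilde E_\pi\backslash E_\pi)$.

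The main obstacle is the compatibility step in the second stage of part \textit{(1)}: the single $c \in N_+$ chosen to straighten $X$ must simultaneously straighten $Y$. This is the moment where the joint structure of the pair $(X,Y)$ matters, rather than separate structures on $X$ and on $Y$, and it relies on having already diagonalized $XY$ so that the subsequent $N_+$-action preserves the product.
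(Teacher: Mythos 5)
Your part \textit{(2)} is correct, and your route (decomposing $\tilde E_\pi$ along the components $E_\rho$ supplied by the appendix and invoking proposition~\ref{prop:eqs}\textit{(ii')} to see that $\tilde E_\pi\cap E_\rho$ is a proper closed subvariety of the irreducible $E_\rho$ for $\rho\neq\pi$) is a legitimate, mildly repackaged version of the paper's argument, which instead stratifies by all $E^1_\rho$ and uses part \textit{(1)} to show $\tilde E_\pi\cap E^\circ_\rho=\varnothing$. The first stage of part \textit{(1)} is essentially sound, except that you assert without proof that $YX$ has ``zeros elsewhere'' on its diagonal, i.e.\ $(YX)_{jj}=0$ for $j\notin\mathrm{im}(\pi)$. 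This is not among the hypotheses and your eigenvector induction needs it (both for the vanishing below the pivot and for the simplicity of the eigenvalues $D_{ii}$); it follows from the fact that $XY$ and $YX$ share their nonzero eigenvalues with multiplicity, which the paper extracts from the identity $Y\,p(XY)\,X=q(YX)$ with $q(t)=t\,p(t)$.

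The genuine gap is in your second stage, precisely at the ``compatibility step'' you flag. For $m<n$ the column-echelon system determining $c$ from $X$ is underdetermined, and the verification you propose does not close: the identity $(Xc^{-1})(cY)=D$ with $Xc^{-1}=s\pi$ only pins down the rows of $cY$ indexed by $\mathrm{im}(\pi)$, while the support pattern on $Y$ only kills $(cY)_{ji}$ for $j>\pi(i)$; the entries $(cY)_{ji}$ with $j<\pi(i)$ and $j\notin\mathrm{im}(\pi)$ are constrained by neither. Concretely, take $m=1$, $n=2$, $\pi(1)=2$, $X=(0,s)$, $Y=(y,t)^T$ with $y,s,t\neq0$: all your stage-1 conclusions hold, the straightening condition on $X$ is vacuous so $c=I$ is admissible by your recipe, yet $cY=Y\neq\pi^Tt$. (This point does lie in $E_\pi^\circ$, but only via $c_{12}=-y/t$, which your $X$-based system never sees.) The paper avoids this by conjugating $YX$ all the way to \emph{diagonal} form by some $U_+\in B_+$, not merely keeping it upper triangular; the two intertwining identities $X'(Y'X')=(X'Y')X'$ and $(Y'X')Y'=Y'(X'Y')$ then confine both $X'$ and $Y'$ to the graph of $\pi$ simultaneously, leaving no residual straightening to do. To repair your version you would have to either do the same, or set up the combined linear system for a single $c$ straightening both $X$ and $Y$ and prove its consistency — which is essentially the statement being proved, so it cannot be asserted for free.
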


\begin{proof}
  We set the stage with few linear algebra facts:
  \begin{itemize}
  \item A matrix $M$ is diagonalizable iff it satisfies a squarefree polynomial
    $p(t)$.
  \item A matrix $M$ is invertible iff it satisfies a polynomial $p(t)$
    not divisible by $t$. (Moreover the polynomials can be taken the same.)
  \end{itemize}
  Since $Y\, p(XY)\, X = q(YX)$ where $q(t) = t p(t)$,
  if $XY$ satisfies $p(t)$, then $YX$ satisfies $q(t)$.
  All put together, we learn that if $XY$ is invertible and diagonalizable,
  then $YX$ is diagonalizable. (Non-example:
  $X =  \begin{pmatrix} 0 & 1 \\ 0 & 1  \end{pmatrix}$,
  $Y =  \begin{pmatrix} 0 & 1 \\ 0 & 0  \end{pmatrix}$.)
  
  \textit{Part (1).}
  Denote the r.h.s.\ $\tilde E_\pi^\circ$. It is clear that $E_\pi^{\circ}\subseteq \tilde E_\pi^\circ$.
  Conversely, consider $(X,Y)\in \tilde E_\pi^\circ$.
  By the above argument,
  we find that both $XY$ and $YX$ are diagonalizable; more precisely,
  there exists $U_-\in B_-$ (resp.\ $U_+\in B_+$) such that $U_-XYU_-^{-1}$
  (resp.\ $U_+YXU_+^{-1}$)
  is diagonal with same diagonal entries as $XY$ (resp.\ $YX$).
  Now consider $(X',Y')=(U_-,U_+)\cdot (X,Y)$. 
  Writing $X'Y'X'$ in two different ways leads to
  \[
    (X'Y'X')_{ij}=(XY)_{ii} X'_{ij} = X'_{ij} (YX)_{jj}
  \]
  and since the $(XY)_{ii}=(YX)_{\pi(i)\pi(i)}$ are distinct nonzero,
  we conclude that $X'_{ij}=0$ unless $\pi(i)=j$.
  Similarly,
  \[
    (Y'XY')_{ji}=(YX)_{jj} Y'_{ji} = Y'_{ji} (XY)_{ii}
  \]
  so $Y'_{ji}=0$ unless $\pi(i)=j$. That $(X,Y)\in E_\pi^\circ$ follows
  immediately.

  \textit{Part (2).} It is clear by definition that $E_\pi \subseteq \tilde E_\pi \subseteq E$.
  Next, consider the decomposition
  \[
    \tilde E_\pi \backslash E_\pi^1 = \bigsqcup_{\rho\ne\pi} (\tilde E_\pi \cap E_\rho^1)
  \]
  If $\rho$ is not full rank, $\dim E_\rho^1<\dim E$. If $\rho$ is full rank, with $\rho\ne\pi$,
  note that $\tilde E_\pi \cap E^\circ_\rho = \varnothing$,
  because the equations \textit{(ii)} of proposition~\ref{prop:eqs} for $\tilde E_\pi$ contradict the distinct nonzero eigenvalues of $XY$ in
  part (1) of the lemma applied to $E^\circ_\rho$. Therefore $\dim(\tilde E_\pi\cap E_\rho^1)\le \dim (E_\rho^1\backslash E^\circ_\rho)< \dim E$.
\end{proof}
\rem[gray]{note that the lower dimensional junk is really there (geometric),
but when degenerating one can only produce lower dimensional embedded
components (so, no geometric lower dimensional junk)}

Putting together lemma~\ref{lem:dense} and the first part of lemma~\ref{lem:flux} leads to proposition~\ref{prop:irr}.

\begin{rmk*}
  We call \eqref{eq:flux} ``flux equations''. To justify this name, we
  recall the definition of fluxes in \S\ref{sssec:flux}.
  $(XY)_{ii}$ is the flux through the leftmost or rightmost vertical edge of the row where pipe labeled $i$
  enters a pipe dream, depending on whether that row is of type $\tp$ or $\bt$;
  so it should be interpreted as ``the flux of pipe numbered $i$''.
  Similarly, $(YX)_{jj}$ is the flux through the North
  side of a pipe dream at column $j$, so that writing $(YX)_{jj}=(XY)_{ii}$ for $\pi(i)=j$ simply expresses the connectivity
  of any pipe dream derived from $E_\pi$. This statement will be made precise in \S\ref{sec:degen}.
\end{rmk*}

\subsection{Generic reducedness}

For future use, we now show:

\begin{lem}\label{lem:genred}
$E$ is generically reduced.
\end{lem}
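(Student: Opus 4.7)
To prove generic reducedness, my strategy is to reduce it to a single tangent space calculation at a carefully chosen point of each top-dimensional component $E_\pi$ ($\pi \in \SS_{m,n}$, these being all the irreducible components of $E$ by appendix~\ref{app:equi}). Specifically, if I exhibit a point $p \in E_\pi$ at which the Zariski tangent space to $E$ has dimension $\dim E = m(n+1)$ and which lies on no other component, then $p$ is a smooth scheme point of $E$, so $\calO_{E,p}$ is a regular (hence reduced) local ring, hence $E$ is reduced at the generic point of $E_\pi$.

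The natural choice is $p = (X_0, Y_0) := (\pi, \pi^T D)$ with $D = \diag(d_1, \ldots, d_m)$ having distinct nonzero entries. By lemma~\ref{lem:dense}, $p$ lies in the open dense subset $E^\circ_\pi \subset E_\pi$, and by lemma~\ref{lem:flux}(1) (the distinct-nonzero-eigenvalues criterion), $p$ does not lie in any other $E_\rho$. Thus the local dimension of $E$ at $p$ is exactly $m(n+1)$, and all that remains is to bound $\dim T_p E$ from above by this number.

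Writing out the linearized equations, $T_p E$ consists of pairs $(\delta X, \delta Y)$ such that $\delta X \cdot Y_0 + X_0 \cdot \delta Y$ is lower triangular and $\delta Y \cdot X_0 + Y_0 \cdot \delta X$ is upper triangular. Partitioning indices by $S := \pi([m]) \subseteq [n]$ versus its complement $S^c$, the linearized system splits into: (i) relations $d_j (\delta X)_{i,\pi(j)} + (\delta Y)_{\pi(i),j} = 0$ for $i < j$ (from the $XY$ condition) and $d_i (\delta X)_{i,\pi(j)} + (\delta Y)_{\pi(i),j} = 0$ for $\pi(i) > \pi(j)$ (from the $YX$ condition, both indices in $S$); (ii) the vanishings $(\delta X)_{i,b} = 0$ for $b \in S^c$ with $b < \pi(i)$, and symmetrically $(\delta Y)_{a,j} = 0$ for $a \in S^c$ with $a > \pi(j)$; and (iii) trivial $0 = 0$ equations from pairs $(a,b) \in S^c \times S^c$ with $a > b$. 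The $\binom{n-m}{2}$ redundancies in (iii) are exactly what prevents $E$ from being a complete intersection when $n - m \geq 2$ (and reassuringly match the dimension of the unipotent stabilizer computed in the proof of lemma~\ref{lem:dense}). A pair-by-pair count shows the kernel in the $2m^2$ variables $\{(\delta X)_{i,\pi(j)}, (\delta Y)_{\pi(i),j}\}$ has dimension $m(m+1)$, and the surviving variables in (ii) contribute $m(n-m)$, giving $\dim T_p E = m(m+1) + m(n-m) = m(n+1)$, as required.

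The delicate point is the analysis in (i) at an \emph{inversion} of $\pi$, i.e.\ a pair $i < j$ with $\pi(i) > \pi(j)$: there both the $XY$- and the $YX$-equation constrain the same pair of variables $((\delta X)_{i,\pi(j)}, (\delta Y)_{\pi(i),j})$, and one must check that genericity of the $d_k$ (so that $d_i \neq d_j$) makes the two equations linearly independent, forcing both variables to vanish. Without this independence the kernel would be larger than $m(n+1)$ and the whole argument would collapse. Once that independence is verified, the remaining case enumeration --- essentially re-doing the stabilizer count of lemma~\ref{lem:dense} at the linearized level --- goes through with no surprises.
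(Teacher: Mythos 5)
Your proposal is correct and follows essentially the same route as the paper: a Zariski tangent space computation at the point $(s\pi,\pi^T t)\in E_\pi^\circ$ (you take $s=I$, $t=D$), with the key step being that at each inversion of $\pi$ the two linearized equations constraining the same pair of variables are independent precisely because the diagonal entries $d_i$ (the paper's $s_it_i$) are distinct. The paper counts $m(n-1)$ independent equations rather than the $m(n+1)$-dimensional kernel, but these are the same calculation.
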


\begin{proof}
  This is a Zariski tangent space calculation.
  Consider the point $(s\pi,\pi^Tt)\in E_\pi^\circ$ and expand at
  first order the equations of $E$:
\[
T_{(s\pi,\pi^T t)}E=\{(X,Y)\colon s\pi Y+X\pi^Tt \text{ lower triangular, }\pi^Tt X+Ys\pi\text{ upper triangular}\}
\]
So we have equations of the form
\begin{align*}
s_{i}Y_{\pi(i)i'}+t_{i'}X_{i\pi(i')}&=0 && i<i'
\\
t_{\pi^{-1}(j)}X_{\pi^{-1}(j)j'}+s_{\pi^{-1}(j')}Y_{j\pi^{-1}(j')}&=0 && j>j'
\end{align*}
where in the second line it is understood that if $j$ or $j'$ is not in the image of $\pi$, then the corresponding term is missing.

For $s$ and $t$ as in lemma~\ref{lem:dense}, the nonzero equations above are linearly independent
(the only nontrivial case is pairs $i<i'$, $j=\pi(i)>j'=\pi(i')$, in which equations of the first and second line involve the same
variables, and we have to use $s_it_i\ne s_{i'}t_{i'}$).
There are $m(m-1)/2$ of the first kind, and $n(n-1)/2-(n-m-1)(n-m-1)/2$ of the second kind (all pairs $(j,j')$ except when neither $j$ nor $j'$ is in the image of $\pi$).
Summing up leads to $m(n-1)$ which is the codimension of $E$.
\end{proof}

In the $m=n$ case, generic reducedness was proved much the same way in
\cite[theorem 2]{Kn-uu}, where it implied reducedness thanks to $E$
being a complete intersection.

\subsection{An involution}\label{ssec:invo}
Finally, we point out the following simple observation, which will be useful below.
Recall that $\gamma_m$ and $\gamma_n$ are the longest elements of $\SS_m$ and $\SS_n$.

\begin{lem}
The involution $\iota:(X,Y)\mapsto (\gamma_m Y^T \gamma_n,\gamma_n X^T \gamma_m)$ preserves $E$,
and sends $E_\pi$ to $E_{\gamma_n \pi \gamma_m}$.
\end{lem}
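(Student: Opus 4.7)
The plan is to prove both assertions via the orbit closure description of $E_\pi$ from Proposition~\ref{prop:irr}. I write $\pi' := \gamma_m \pi \gamma_n$ (matrix product), which represents the permutation $\gamma_n \circ \pi \circ \gamma_m$ in the paper's map-to-matrix convention, so what the statement calls $E_{\gamma_n \pi \gamma_m}$ is $E_{\pi'}$.

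For the first assertion, set $(X',Y') := \iota(X,Y)$ and compute
\[
X'Y' = \gamma_m Y^T \gamma_n \cdot \gamma_n X^T \gamma_m = \gamma_m (XY)^T \gamma_m,
\qquad
Y'X' = \gamma_n (YX)^T \gamma_n.
\]
Since transposition and conjugation by the longest element each swap upper and lower triangularity, $X'Y'$ is lower triangular iff $XY$ is, and similarly for $Y'X'$; this shows $\iota$ maps $E$ to itself, and $\iota^2 = \mathrm{id}$ is immediate from $\gamma_m^2 = \gamma_n^2 = 1$.

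Next I apply $\iota$ to the base point $(\pi, \pi^T s)$ with $s$ a diagonal $m\times m$ matrix. Using $s^T = s$ gives
\[
\iota(\pi, \pi^T s) = (\gamma_m s \pi \gamma_n,\ \gamma_n \pi^T \gamma_m) = (s' \pi',\ (\pi')^T),
\]
with $s' := \gamma_m s \gamma_m$ again diagonal. Because $\pi'(\pi')^T = I_m$, this equals $(s', I_n) \cdot (\pi', (\pi')^T s')$ under the $B_- \times B_+$ action, so lies in $E_{\pi'}$. Moreover, a short unwinding of the definitions shows $\iota\bigl((b,c)\cdot(X,Y)\bigr) = \tau(b,c)\cdot\iota(X,Y)$, where $\tau(b,c) := \bigl(\gamma_m(b^T)^{-1}\gamma_m,\ \gamma_n(c^T)^{-1}\gamma_n\bigr)$ is a group automorphism of $B_-\times B_+$ (each $\gamma$-conjugation preserves its factor and combines with the transpose to exchange $B_+$ and $B_-$). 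Combining, $\iota$ carries the dense orbit of $E_\pi$ to the dense orbit of $E_{\pi'}$; taking Zariski closures gives $\iota(E_\pi) \subseteq E_{\pi'}$, and applying $\iota$ once more reverses the inclusion.

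The entire argument is routine bookkeeping; the only genuine subtlety is tracking the two indexings of $\pi$. As a map one writes $\gamma_n\pi\gamma_m$, while as the $m\times n$ matrix with $\pi_{ij} = [\pi(i)=j]$ the same object is $\gamma_m\pi\gamma_n$, the two coinciding via $(\gamma_m\pi\gamma_n)_{ij} = \pi_{m+1-i,\,n+1-j}$.
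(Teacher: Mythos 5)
Your proof is correct, but it identifies the permutation of components by a different mechanism than the paper does. Both arguments begin identically: you compute $X'Y'=\gamma_m(XY)^T\gamma_m$ and $Y'X'=\gamma_n(YX)^T\gamma_n$ and observe that transposition and $\gamma$-conjugation each swap the two triangularities, so $\iota$ preserves $E$ and hence permutes its components. Where you diverge is in pinning down the permutation: the paper evaluates the diagonal entries $(Y'X')_{jj}=(YX)_{\gamma_n(j)\gamma_n(j)}$ and invokes proposition~\ref{prop:eqs}\,\textit{(ii)} and \textit{(ii')} — the flux equations that hold on $E_\pi$ and provably fail on the other components — to read off that $\iota(E_\pi)=E_{\gamma_n\pi\gamma_m}$. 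You instead work directly from the orbit-closure description in proposition~\ref{prop:irr}: you push a base point $(\pi,\pi^Ts)$ through $\iota$, land in $E_{\pi'}$, and then transport the whole dense orbit using the twisted equivariance $\iota\bigl((b,c)\cdot(X,Y)\bigr)=\tau(b,c)\cdot\iota(X,Y)$ with $\tau(b,c)=\bigl(\gamma_m(b^T)^{-1}\gamma_m,\gamma_n(c^T)^{-1}\gamma_n\bigr)$. Your route buys a self-contained equivariance statement (which the paper only asserts informally afterwards, when remarking that $\iota$ is $T$-normalizing rather than $T$-equivariant), at the cost of the explicit base-point and intertwiner computations; the paper's route is shorter because \textit{(ii)/(ii')} were already set up as a component-separating invariant. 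Two small points to tidy: when $s$ is a non-invertible diagonal matrix, $(s',I_n)$ is not an element of $B_-\times B_+$, so you should either restrict to invertible $s$ (which suffices, the rest being absorbed by the closure) or note that $(s'\pi',(\pi')^T)$ is a limit of such orbit points; and the appeal to $\pi'(\pi')^T=I_m$ is not actually needed for the identity $(s',I_n)\cdot(\pi',(\pi')^Ts')=(s'\pi',(\pi')^T)$, which is just the action formula.
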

\begin{proof}
One first checks that the defining equations of $E$ are invariant under $\iota$.
This implies that the components $E_\pi$ are permuted by the involution $\iota$.
In order to figure out the permutation,
we use proposition~\ref{prop:eqs} \textit{(ii)} and \textit{(ii')}.
Denote $(X',Y')=\iota(X,Y)$. Then
\begin{align*}
(Y'X')_{jj}=(\gamma_n X^T Y^T \gamma_n)_{jj} = (YX)_{\gamma_n(j)\gamma_n(j)} &=
\begin{cases}
(XY)_{ii} & \gamma_n(j)=\pi(i)\\
0&\text{else}
\end{cases}
\\
&=
\begin{cases}
(XY)_{\gamma_m(i)\gamma_m(i)} & \gamma_n(j)=\pi(\gamma_m(i))\\
0&\text{else}
\end{cases}
\\
&=
\begin{cases}
(X'Y')_{ii}& j=\gamma_n\pi\gamma_m(i)
\\
0&\text{else}
\end{cases}
\end{align*}
\end{proof}
This $\iota$ is not $T$-equivariant but rather $T$-normalizing (the
combined actions form a semi-direct product $T \rtimes \ZZ_2$, not a
direct product). By keeping tracks of the weights, one finds:

\begin{cor} The following equality holds:
  \[
    [E_\pi]
    =
    [E_{\gamma_n\pi\gamma_m}]|_{A\leftrightarrow B,x_i\leftrightarrow\, -x_{\gamma_m(i)},y_j\leftrightarrow\, -y_{\gamma_n(j)}}
\]
\end{cor}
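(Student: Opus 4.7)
The plan is to combine the isomorphism $\iota: E_\pi \xrightarrow{\sim} E_{\gamma_n\pi\gamma_m}$ from the preceding lemma with the functoriality of equivariant classes under the $T$-normalizing automorphism $\iota$. Since $\iota$ intertwines the $T$-action up to an automorphism $\sigma$ of $T$, the induced substitution $\sigma^\ast$ on $H^\ast_T(\mathrm{pt}) = \ZZ[x_1,\ldots,x_m,y_1,\ldots,y_n,A,B]$ satisfies $[E_\pi] = \sigma^\ast[E_{\gamma_n\pi\gamma_m}]$, simply because the equivariant class transforms contravariantly under the twisted equivariance.

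The concrete task is therefore to identify $\sigma^\ast$ explicitly. I would do this by tracking weights of matrix entries. Writing $(X',Y') := \iota(X,Y)$, one reads off
\[
X'_{ij} = Y_{\gamma_n(j)\,\gamma_m(i)},\qquad Y'_{ji} = X_{\gamma_m(i)\,\gamma_n(j)},
\]
so by \eqref{eq:wt} these entries carry $T$-weights
\[
\wt_T(X'_{ij}) = B - x_{\gamma_m(i)} + y_{\gamma_n(j)},\qquad \wt_T(Y'_{ji}) = A + x_{\gamma_m(i)} - y_{\gamma_n(j)}.
\]
On the other hand, if we regard $(X',Y')$ as the natural matrix-pair coordinates on the target copy, \eqref{eq:wt} assigns them the \emph{canonical} weights $A+x_i-y_j$ and $B-x_i+y_j$. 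Matching the two readings forces $\sigma^\ast$ to be precisely the substitution $A \leftrightarrow B$, $x_i \leftrightarrow -x_{\gamma_m(i)}$, $y_j \leftrightarrow -y_{\gamma_n(j)}$; both entry types yield the same answer and the substitution is involutive, as it must be.

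With these two ingredients in place the corollary is immediate: applying $\sigma^\ast$ to $[E_{\gamma_n\pi\gamma_m}]$ produces $[E_\pi]$, which is exactly the asserted identity. The only obstacle is bookkeeping -- signs, transpositions, and the precise placement of the $\gamma_m, \gamma_n$ factors must be tracked carefully, since a single misplaced conjugation would corrupt the substitution on the $x$ and $y$ variables -- but no substantive difficulty arises beyond that.
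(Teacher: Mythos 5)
Your argument is correct and follows the paper's own route: the paper derives the corollary from the lemma's involution $\iota$ with the one-line justification ``by keeping track of the weights,'' and your weight computation ($\wt_T(X'_{ij}) = B - x_{\gamma_m(i)} + y_{\gamma_n(j)}$, $\wt_T(Y'_{ji}) = A + x_{\gamma_m(i)} - y_{\gamma_n(j)}$) is exactly the bookkeeping that justification leaves implicit. The identification of $\sigma^\ast$ and the consistency check on both entry types are right, so nothing is missing.
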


\begin{rmk*}
  One can also consider the mapping $(X,Y)\to (X^T,Y^T)$, which sends
  the $m\times n$ lower-upper scheme to the $n\times m$ lower-upper
  scheme, so is only a symmetry of $E$ in the case $m=n$ (then sending
  $E_\pi$ to $E_{\pi^{-1}}$). This also justifies that we may assume
  without loss of generality $m\le n$.
\end{rmk*}

\section{Pipe dreams and Yang--Baxter equations}

Recall that our pipes are oriented upwards, i.e. they enter on the
West side in $\tp$ rows, on the East side in $\bt$ rows, and exit on the North.
For the purposes of this section, we'll often drop indices on the row and column parameters, 
so that to each square are attached two parameters say $x$ and $y$;
and we'll write them next to that square or row/column of squares.
We also allow ourselves to deform rows slightly in such a way that the squares become parallelograms, as will become clear on the pictures
below. Finally, to distinguish squares (or parallelograms) on rows of type $\beta_i\in \{\tp,\bt\}$,
we'll use arrows that show the direction of propagation of the pipes:
\[
\tp\,:\quad \tikz[baseline=-3pt]{\plaq{}\draw[-latex] (-.1,-.1) -- (.1,.1); \node at (.6,0) {$\ss x$}; \node at (0,.6) {$\ss y$};}
\qquad\qquad
\bt\,:\quad \tikz[baseline=-3pt]{\plaq{}\draw[-latex] (.1,-.1) -- (-.1,.1); \node at (.6,0) {$\ss x$}; \node at (0,.6) {$\ss y$};}
\]
We recall from \S\ref{sec:intro} that to each tile is attached a weight $\wt_{\tp/\bt}(\text{tile},x-y)$.

\subsection{The Yang--Baxter equation and crossing symmetry}

Given parameters $x$, $x'$ and $y$, introduce an additional ``rightward'' diamond
\tikz[baseline=-3pt,xscale=.58,rotate=45]{\plaq{}\draw[-latex] (-.15,.15) -- (.15,-.15); \node at (.7,-.1) {$\ss x\,$}; \node at (.1,-.7) {$\ss x'$};}
with the following possible tiles:
\begin{center}
\begin{tikzpicture}
\matrix[column sep=.6cm,every cell/.style={xscale=.58,rotate=-45}]{
\plaq{a} & \plaq{j} & \plaq{r} & &&
\plaq{c} & \plaq{h} & \plaq{v} & &&
\plaq{}
\\
\node{$A+B$};&\node{$A+B$};&\node{$A+B$};&&&
\node{$x'-x$};&\node{$x'-x$};&\node{$x'-x$};&&&
\node{$A+B+x-x'$};
\\
};
\end{tikzpicture}
\end{center}
We've indicated below each one its weight.
These are similar in spirit to the weights $\wt_{\tp/\bt}$.

We now have the following key result:
\begin{prop}\label{prop:ybe}
The Yang--Baxter equation holds, involving two $\tp$ diamonds and the rightward diamond:
\[
\begin{tikzpicture}[baseline=-3pt]
\draw[bgplaq] (0:1) -- (60:1) -- (120:1) -- (180:1) -- (240:1) -- (300:1) -- cycle;
\draw (120:1) -- (0,0) -- (240:1) (0,0) -- (0:1);
\draw[-latex] (-.65,0) -- (-.35,0);
\draw[-latex] (60:.35) -- (60:.65);
\draw[-latex] (-60:.5) ++(-.15,-.075) -- ++(.3,.15);
\node at (30:1.15) {$\ss x$};
\node at (-30:1.15) {$\ss x'$};
\node at (90:1.15) {$\ss y$};
\end{tikzpicture}
=
\begin{tikzpicture}[baseline=-3pt]
\draw[bgplaq] (0:1) -- (60:1) -- (120:1) -- (180:1) -- (240:1) -- (300:1) -- cycle;
\draw (60:1) -- (0,0) -- (300:1) (0,0) -- (180:1);
\draw[latex-] (.65,0) -- (.35,0);
\draw[latex-] (240:.35) -- (240:.65);
\draw[-latex] (120:.5) ++(-.15,-.075) -- ++(.3,.15);
\node at (210:1.15) {$\ss x$};
\node at (150:1.15) {$\ss x'$};
\node at (90:1.15) {$\ss y$};
\end{tikzpicture}
\]
The equation should be understood as the sum of contributions of all
possible tilings of those regions, at fixed connectivity of the
boundary points.
\end{prop}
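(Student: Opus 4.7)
I would verify the Yang--Baxter equation by direct enumeration of boundary connectivities. The hexagon has six external half-edges; the arrows in the figure fix the propagation directions within each rhombus, so each external half-edge is either a pipe entry, a pipe exit, or empty, and pipe conservation (entries must match exits) restricts the admissible boundary patterns to a small finite set. For each such pattern, both sides of the equation decompose as a sum over compatible tilings of the three interior rhombi (compatibility being imposed on the three internal half-edges shared between rhombi), and the Yang--Baxter identity becomes a polynomial equality in $\ZZ[A,B,x,x',y]$ between these two weight sums.

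To cut down the amount of case-checking I would exploit symmetries. The zero-pipe case is immediate: both sides reduce to a single product of three blank-tile weights, and the equality is visible. The three-pipe case is handled similarly using the ``ice rule'' $(A+u-v)+(B-u+v)=A+B$ for $\tp$ tiles, together with its analogue $(x'-x)+(A+B+x-x')=A+B$ for rightward tiles. A particle--hole duality interchanging straight and blank tiles, combined with the left/right mirror of \S\ref{ssec:mirror}, further reduces the remaining one- and two-pipe cases to a handful of genuinely distinct configurations.

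For each remaining boundary pattern, the verification amounts to a small polynomial identity in $\ZZ[A,B,x,x',y]$ of the form
\[
\sum_{\text{LHS tilings}} \prod_{\text{tiles}} \wt = \sum_{\text{RHS tilings}} \prod_{\text{tiles}} \wt,
\]
each side expanding to at most a handful of monomials of bounded degree. These identities reduce by routine expansion; the nontrivial content is the single bilinear relation expressing that a pipe passing through one intermediate rhombus can be rerouted through the other in a weight-preserving way, which is the characteristic RLL relation of the underlying R-matrix.

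The main obstacle is the two-pipe case, where pipes may cross one another twice inside the hexagon -- a configuration permitted in the generic setting, in contrast to the classic or bumpless pipe dreams. These double-crossings produce additional admissible tilings on both sides, and arranging for their weighted contributions to cancel correctly is the delicate point; it is precisely here that the factors $A+B+x-x'$ and $A+B$ in the rightward-diamond weights (as opposed to just $x-x'$) are essential. Once this case is correctly enumerated, the remaining cases are routine and the equality of weight sums follows.
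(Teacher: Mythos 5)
Your plan coincides with the paper's own proof, which likewise establishes the identity by direct finite verification: for each boundary connectivity one expands both sides as weight sums over the compatible tilings of the three rhombi and checks a small polynomial identity in $\ZZ[A,B,x,x',y]$ (the paper exhibits two representative connectivities and leaves the rest as routine). One caution about your labor-saving symmetries: the mirror map of \S\ref{ssec:mirror} exchanges $\tp$ and $\bt$ rows and so relates this Yang--Baxter equation to its $\bt$-analogue rather than pairing up cases within it, and the straight/blank interchange does not preserve the rightward-diamond weights ($x'-x$ versus $A+B+x-x'$) --- but since you ultimately fall back on exhaustive expansion, neither point affects correctness.
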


\begin{proof}
This is a direct computation. We give two examples. Suppose that we have a connectivity symbolically described by
\begin{center}
\begin{tikzpicture}
\draw[bgplaq] (0:1) -- coordinate(e) (60:1) -- coordinate(f) (120:1) -- coordinate(a) (180:1) -- coordinate(b) (240:1) -- coordinate(c) (300:1) -- coordinate(d) cycle;
\draw[/linkpattern/edge,bend left] (b) to (d);
\draw[/linkpattern/edge,bend left] (c) to (e);
\end{tikzpicture}
\end{center}
The Yang--Baxter equation takes the form
\begin{align*}
\begin{tikzpicture}[baseline=-3pt]
\draw[bgplaq] (0:1) -- coordinate(e) (60:1) -- coordinate(f) (120:1) -- coordinate(a) (180:1) -- coordinate(b) (240:1) -- coordinate(c) (300:1) -- coordinate(d) cycle;
\draw (120:1) -- coordinate(h) (0,0) -- coordinate(g) (240:1) (0,0) -- coordinate(i) (0:1);
\draw[/linkpattern/edge,bend left] (b) to (g) -- (d);
\draw[/linkpattern/edge,bend left] (c) -- (i) to (e);
\end{tikzpicture}
&\ =\ 
\begin{tikzpicture}[baseline=-3pt]
\draw[bgplaq] (0:1) -- coordinate(e) (60:1) -- coordinate(f) (120:1) -- coordinate(a) (180:1) -- coordinate(b) (240:1) -- coordinate(c) (300:1) -- coordinate(d) cycle;
\draw (60:1) -- coordinate(h) (0,0) -- coordinate(i) (300:1) (0,0) -- coordinate(g) (180:1);
\draw[/linkpattern/edge,bend right] (b) to (g) [bend left] to (h) -- (d);
\draw[/linkpattern/edge,bend left] (c) to (i) -- (e);
\end{tikzpicture}
+
\begin{tikzpicture}[baseline=-3pt]
\draw[bgplaq] (0:1) -- coordinate(e) (60:1) -- coordinate(f) (120:1) -- coordinate(a) (180:1) -- coordinate(b) (240:1) -- coordinate(c) (300:1) -- coordinate(d) cycle;
\draw (60:1) -- coordinate(h) (0,0) -- coordinate(i) (300:1) (0,0) -- coordinate(g) (180:1);
\draw[/linkpattern/edge,bend left] (b) -- (i) to (d);
\draw[/linkpattern/edge,bend left] (c) -- (g) to (h) [bend right] to (e);
\end{tikzpicture}
\\
(A+B)^2(A+x'-y)&\ =\ (A+B)^2(x'-x)+(A+B)^2(A+x-y)
\end{align*}
Similarly, if the connectivity is
\begin{center}
\begin{tikzpicture}
\draw[bgplaq] (0:1) -- coordinate(e) (60:1) -- coordinate(f) (120:1) -- coordinate(a) (180:1) -- coordinate(b) (240:1) -- coordinate(c) (300:1) -- coordinate(d) cycle;
\draw[/linkpattern/edge,bend left] (f) to (a);
\end{tikzpicture}
\end{center}
then
\begin{align*}
\begin{tikzpicture}[baseline=-3pt]
\draw[bgplaq] (0:1) -- coordinate(e) (60:1) -- coordinate(f) (120:1) -- coordinate(a) (180:1) -- coordinate(b) (240:1) -- coordinate(c) (300:1) -- coordinate(d) cycle;
\draw (120:1) -- coordinate(h) (0,0) -- coordinate(g) (240:1) (0,0) -- coordinate(i) (0:1);
\draw[/linkpattern/edge,bend left] (f) to (h) to (a);
\end{tikzpicture}
+
\begin{tikzpicture}[baseline=-3pt]
\draw[bgplaq] (0:1) -- coordinate(e) (60:1) -- coordinate(f) (120:1) -- coordinate(a) (180:1) -- coordinate(b) (240:1) -- coordinate(c) (300:1) -- coordinate(d) cycle;
\draw (120:1) -- coordinate(h) (0,0) -- coordinate(g) (240:1) (0,0) -- coordinate(i) (0:1);
\draw[/linkpattern/edge,bend left] (f) -- (i) to (g) -- (a);
\end{tikzpicture}
&\ =\ \begin{tikzpicture}[baseline=-3pt]
\draw[bgplaq] (0:1) -- coordinate(e) (60:1) -- coordinate(f) (120:1) -- coordinate(a) (180:1) -- coordinate(b) (240:1) -- coordinate(c) (300:1) -- coordinate(d) cycle;
\draw (60:1) -- coordinate(h) (0,0) -- coordinate(i) (300:1) (0,0) -- coordinate(g) (180:1);
\draw[/linkpattern/edge,bend left] (f) to (a);
\end{tikzpicture}
\\
  (A+B)^2(B-x'+y) + (A+B)(x'-x)(A+x-y) &\ =\ (A+B)(A+B+x-x')(B-x+y)
                                         \qedhere
\end{align*} 
\end{proof}

Next we introduce another ``upward'' diamond
\tikz[baseline=-3pt,xscale=.58,rotate=45]{\plaq{}\draw[-latex] (-.1,-.1) -- (.1,.1); \node at (.7,-.1) {$\ss x\,$}; \node at (.1,-.7) {$\ss x'$};}
with the following possible fillings and weights:
\begin{center}
\begin{tikzpicture}
\matrix[column sep=.3cm,every cell/.style={xscale=.58,rotate=-45}]{
\plaq{b} & \plaq{k} & \plaq{i} & \plaq{c} & \plaq{h} & \plaq{v} & \plaq{}
\\
\node{$A+B$};&\node{$A+B$};&\node{$A+B$};&\node{$A+B+x-x'$};&\node{$A+B+x-x'$};&\node{$A+B+x-x'$};&\node{$x'-x$};
\\
};
\end{tikzpicture}
\end{center}
We have another version of the Yang--Baxter equation,
this time mixing a $\tp$ row with a $\bt$ row:
\begin{prop}\label{prop:ybe2}
The Yang--Baxter equation holds, involving a $\tp$ diamond, a $\bt$ diamond and the upward diamond:
\[
\begin{tikzpicture}[baseline=-3pt]
\draw[bgplaq] (0:1) -- (60:1) -- (120:1) -- (180:1) -- (240:1) -- (300:1) -- cycle;
\draw (120:1) -- (0,0) -- (240:1) (0,0) -- (0:1);
\draw[-latex] (-.5,-.15) -- (-.5,.15);
\draw[-latex] (60:.35) -- (60:.65);
\draw[latex-] (-60:.35) -- (-60:.65);
\node at (30:1.15) {$\ss x$};
\node at (-30:1.15) {$\ss x'$};
\node at (90:1.15) {$\ss y$};
\end{tikzpicture}
=
\begin{tikzpicture}[baseline=-3pt]
\draw[bgplaq] (0:1) -- (60:1) -- (120:1) -- (180:1) -- (240:1) -- (300:1) -- cycle;
\draw (60:1) -- (0,0) -- (300:1) (0,0) -- (180:1);
\draw[-latex] (.5,-.15) -- (.5,.15);
\draw[latex-] (240:.35) -- (240:.65);
\draw[-latex] (120:.35) -- (120:.65);
\node at (210:1.15) {$\ss x$};
\node at (150:1.15) {$\ss x'$};
\node at (90:1.15) {$\ss y$};
\end{tikzpicture}
\]
where the equation should be understood as the sum of contributions of all possible tilings of those regions,
at fixed connectivity of the boundary points.
\end{prop}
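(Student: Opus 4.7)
My plan is to follow the same template as Proposition~\ref{prop:ybe}: verify the identity by a finite case analysis on the boundary data of the hexagonal region, expanding both sides as sums over admissible internal tilings and checking each resulting polynomial identity directly.

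First I would enumerate the possible connectivities of the six boundary points, subject to the orientation rules (pipes propagate NE on a $\tp$ row, NW on a $\bt$ row, and N on the upward row). As in the previous proposition, these rules severely restrict which pairings are admissible, and the resulting list is short; I expect essentially a handful of cases, distinguished by which pairs of boundary edges are joined by pipes and whether any crossings occur in the interior. Note that here one of the two ``horizontal'' diamonds (the $\tp$) has its pipe propagating upward-to-the-right while the other (the $\bt$) propagates upward-to-the-left, so the feasible connectivities differ from those considered in Proposition~\ref{prop:ybe}, but the counting principle is identical.

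For each fixed boundary connectivity, both the LHS and the RHS become finite sums over compatible tilings of the three interior diamonds. Using the weight tables (for the $\tp$, $\bt$, and upward diamonds respectively), each term is a product of three linear factors, and the identity to verify is a low-degree polynomial equation in $A,B,x,x',y$. Each case should reduce to a one-line check of the form ``sum of a few monomials equals sum of a few monomials'', typically involving the cancellation of terms containing the upward-diamond factor $A+B+x-x'$ against mixed products of $\tp$ and $\bt$ weights. Writing out one or two representative cases (in the spirit of the two examples given in the proof of Proposition~\ref{prop:ybe}) should be sufficient, with the remaining cases strictly analogous.

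The main obstacle is purely organizational: the $\tp$ and $\bt$ rows use opposite conventions (blank and straight tiles carry the swapped weights $B-x+y$ and $A+x-y$), and the upward diamond has seven possible fillings whose weights come in three flavors ($A+B$, $A+B+x-x'$, and $x'-x$). To manage this, I would first classify cases by the pipe flow of the $\tp$ and $\bt$ diamonds (which determines which of their four boundary edges carry pipes), and only then enumerate the compatible fillings of the upward diamond. This keeps the case list tractable and mirrors exactly the structure of the argument used for Proposition~\ref{prop:ybe}; once set up, each individual identity is a mechanical verification, and no conceptual difficulty beyond careful bookkeeping is expected.
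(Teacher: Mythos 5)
Your proposal is correct and matches the paper's proof, which simply says the verification is ``similar to that of proposition~\ref{prop:ybe}'', i.e.\ the same finite case analysis over boundary connectivities with the weight tables for the $\tp$, $\bt$, and upward diamonds. The paper additionally remarks that one can shortcut the bookkeeping by observing that this Yang--Baxter equation is obtained from the first one by a $60$-degree rotation together with an appropriate substitution of parameters, but your direct verification is exactly the intended argument.
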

\begin{proof}
The proof is similar to that of proposition~\ref{prop:ybe}; in fact, the two equations can be obtained from each
other by 60 degree rotation and appropriate substitution of parameters.
\end{proof}

One could define similarly leftward and downward diamonds, but we
shall not need them in what follows.

Finally, we need one more property:

\begin{lem}(Crossing symmetry)\label{lem:fluxr}
Consider rows which contain a single pipe.
There is a weight-preserving bijection between rows of type $\bt$ and of type $\tp$ with this property, assuming they
have the same parameter $x_i$ attached to them.
\end{lem}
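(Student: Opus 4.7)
The plan is to observe that single-pipe rows admit a very rigid combinatorial description, and that under this description the weights from the two row types coincide term-by-term once we match configurations by the column at which the pipe exits North.

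First I would argue rigidity. In a $\tp$ row containing only one pipe, the pipe must enter from the West by the row-type rules, and no further pipe enters from the South or propagates vertically through the row (otherwise the row would contain more than one pipe). Combined with the N/E/NE orientation available in $\tp$ rows, this forces the row to consist, for a unique $j^\ast\in\{1,\ldots,n\}$, of horizontal-straight tiles in columns $1,\ldots,j^\ast-1$, a single W-to-N elbow at column $j^\ast$, and blank tiles in columns $j^\ast+1,\ldots,n$. A symmetric argument using the N/W/NW orientation of $\bt$ rows forces a $\bt$ single-pipe row to consist of blank tiles in columns $1,\ldots,j^\ast-1$, an E-to-N elbow at column $j^\ast$, and horizontal-straight tiles in columns $j^\ast+1,\ldots,n$, again for a unique exit column $j^\ast$. (No double crossings can arise, since with a single pipe in the row there is no second strand to cross.)

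Next I would define the bijection: send the $\tp$ single-pipe row with exit column $j^\ast$ to the $\bt$ single-pipe row with the same exit column $j^\ast$. By rigidity, both sides are parametrised by $j^\ast\in\{1,\ldots,n\}$, so this is a bijection. To check weight-preservation I would multiply column by column, using the tile-weight definitions from the introduction: the $\tp$-straight weight $A+x-y_j$ equals the $\bt$-blank weight, and the $\tp$-blank weight $B-x+y_j$ equals the $\bt$-straight weight, so the swap of ``straight'' and ``blank'' on either side of $j^\ast$ leaves each factor invariant; the elbow at $j^\ast$ contributes $A+B$ in both cases.

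There is essentially no serious obstacle: the content of the lemma is the trivial symmetry of the weight definitions under exchanging $(\tp,\text{straight})\leftrightarrow(\bt,\text{blank})$ and $(\tp,\text{blank})\leftrightarrow(\bt,\text{straight})$. The only step demanding any care is the rigidity claim, and that is a direct consequence of the single-pipe hypothesis together with the propagation conventions; everything else is a column-by-column comparison of linear factors.
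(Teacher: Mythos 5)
Your proof is correct and takes essentially the same route as the paper: the paper describes the bijection as switching the occupied/empty state of every internal vertical edge of the row, which for a single-pipe row is exactly your map fixing the North exit column $j^\ast$ and swapping straight tiles with blank tiles on either side of it. Your explicit rigidity classification and the column-by-column identification of $\tp$-straight with $\bt$-blank weights (and vice versa) just spell out what the paper leaves as ``easy to check.''
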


\begin{proof}
The bijection is very simple to describe. Every vertical edge between two tiles can be either empty or occupied (by
the one pipe); switch its state, and then redraw the tiles accordingly. Here is a general example:
\[
\tp\ \tikz[baseline=-3pt]{\node[loop] {\plaq{h}&\plaq{h}&\plaq{j}&\plaq{}\\};}
\quad\longleftrightarrow\quad
\bt\ \tikz[baseline=-3pt]{\node[loop] {\plaq{}&\plaq{}&\plaq{i}&\plaq{h}\\};}
\]
It is easy to check that this is weight-preserving.
\end{proof}

This correspondence also appears as part of a larger bijection
in the very recent \cite{weigandt2025changing}, but we cannot see any
connection.

\subsection{The generic pipe dream polynomials}
We are now ready to analyze the generic pipe dream polynomials.
We define $G_\pi^\beta$ as in the introduction, except we make the
(soon to be proved nonexistent) dependence on $\beta$ explicit.
Pictorially, e.g., if $\beta=(\tp,\bt,\tp)$ and $\pi=435$,
\[
G_\pi^\beta=\begin{tikzpicture}[scale=.6,baseline=(current  bounding  box.center)]
\fill[\linkpatternboxcolor] (0,0) rectangle (5,3);
\draw (0,0) grid (5,3);
\foreach\x in {1,...,5} \node at (\x-.5,3.5) {$\ss y_\x$};
\foreach\x in {1,...,5} \draw[-latex] (\x-.65,.35) -- ++(.3,.3);
\foreach\x in {1,...,5} \draw[-latex] (\x-.35,1.35) -- ++(-.3,.3);
\foreach\x in {1,...,5} \draw[-latex] (\x-.65,2.35) -- ++(.3,.3);
\begin{scope}[dotted,\linkpatternedgecolor,every node/.style={circle,inner sep=1pt,fill=\linkpatternedgecolor}]
\draw (0,.5) node {} -- (2.5,3) node {};
\draw (5,1.5) node {} -- (4.5,3) node {};
\draw[bend right=10] (0,2.5) node {} to (3.5,3) node {};
\end{scope}
\node at (5.5,0.5) {$\ss x_2$};
\node at (5.5,1.5) {$\ss x_3$};
\node at (5.5,2.5) {$\ss x_1$};
\end{tikzpicture}
\]
where it is understood that such pictures represent the ``state sum''
of weights of all fillings compatible with the boundary condition and
the connectivity $\pi$.

\begin{lem}\label{lem:beta1}
$G_\pi^\beta$ does not depend on the last letter $\beta_m$ of $\beta$.
\end{lem}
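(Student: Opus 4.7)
The plan is to apply the crossing symmetry of Lemma~\ref{lem:fluxr} to the bottom row of any hybrid generic pipe dream. The key observation is that this row contains exactly one pipe, so the lemma directly yields the desired bijection.

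First I would verify that the bottom row contains only the pipe entering from its West or East side. This follows from the two defining features of a hybrid generic pipe dream: no pipe enters from the South side of the grid, and within each row pipes propagate strictly upward (N, E, or NE in $\tp$ rows; N, W, or NW in $\bt$ rows), so no pipe from rows $1,\ldots,m-1$ can descend into row $m$. Hence the only pipe present in row $m$ is the one coming in from its side.

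Next I would check that the labeling $\varphi$ is unaffected by flipping $\beta_m$. Pipes are numbered counterclockwise from the Northwest corner, first descending the $\tp$-rows on the West side and then ascending the $\bt$-rows on the East side, so the pipe in row $m$ sits at the boundary between the two numbered blocks. Writing the indices directly, flipping $\beta_m$ decreases the size of the West block by one and increases the East block by one (or vice versa), and these two shifts cancel in the expression for $\varphi(i)$ for every $i$, including $i=m$. In particular, the $x$-parameter $x_{\varphi(m)}$ attached to row $m$ is invariant, so the hypothesis of Lemma~\ref{lem:fluxr} (same $x$-parameter in the two rows being compared) is satisfied.

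Finally I would apply Lemma~\ref{lem:fluxr} to the last row: it furnishes a weight-preserving bijection between single-pipe fillings of types $\tp$ and $\bt$ with the same $x$-parameter, preserving the column at which the pipe exits to the North. Since rows $1,\ldots,m-1$ are untouched by this operation and continue to receive the same pipe at the same column from below, this extends to a weight-preserving, connectivity-preserving bijection between HGPDs of hybridization $\beta$ and of the hybridization $\beta'$ obtained from $\beta$ by flipping $\beta_m$. Summing over pipe dreams with fixed connectivity $\pi$ gives $G_\pi^\beta=G_\pi^{\beta'}$. The only minor obstacle is the bookkeeping around $\varphi$: one has to verify carefully that flipping the last letter does not shift any pipe label, which relies on the specific counterclockwise numbering convention; everything else is a direct application of the crossing symmetry already established.
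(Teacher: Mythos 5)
Your proposal is correct and follows the same route as the paper: the paper's proof is exactly an application of the crossing symmetry of Lemma~\ref{lem:fluxr} to the last row, together with the observations that the bottom row carries a single pipe, that the pipe numbering $\varphi$ (hence the parameter $x_{\varphi(m)}$) is unchanged by flipping $\beta_m$, and that the connectivity is preserved. You simply spell out these checks in more detail than the paper does.
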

\begin{proof}
This is a direct consequence of lemma~\ref{lem:fluxr} applied to the last row of the grid, noting that the bijection
does not affect the numbering of the pipes (in particular, the parameter attached to the last row
stays the same) and the connectivity.
\end{proof}

\begin{lem}\label{lem:beta2}
One has $G_\pi^{\beta',\tp,\bt,\beta''}=G_\pi^{\beta',\bt,\tp,\beta''}$ where $\beta'$ and $\beta''$ are arbitrary subsequences.
\end{lem}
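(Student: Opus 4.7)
The proof is by the standard Yang--Baxter ``railroad'' argument, with Proposition~\ref{prop:ybe2} supplying the moves. Without loss of generality take the two differing positions to be $k$ and $k+1$ with $\beta^{(1)}_k=\tp$, $\beta^{(1)}_{k+1}=\bt$. A direct check of the counterclockwise labeling rule for $\varphi$ shows that $\varphi^{(1)}$ and $\varphi^{(2)}$ agree outside $\{k,k+1\}$ and transpose there; setting $p=\varphi^{(1)}(k)=\varphi^{(2)}(k+1)$ and $q=\varphi^{(1)}(k+1)=\varphi^{(2)}(k)$, rows $k$ and $k+1$ carry spectral parameters $(x_p,x_q)$ in $G_\pi^{\beta^{(1)}}$ and $(x_q,x_p)$ in $G_\pi^{\beta^{(2)}}$. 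So everything outside these two rows is literally identical, and the entire task reduces to a YBE argument on the two-row strip at positions $k,k+1$.

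I would insert an upward-diamond intertwiner with spectral parameters $(x_p,x_q)$ between rows $k$ and $k+1$ at the West end of the diagram, and then slide it across the strip to the East end using one application of Proposition~\ref{prop:ybe2} for each of the $n$ columns. Each slide preserves the state sum, and after the $n$-th slide the strip between West and East boundaries is traversed by rows of types and parameters $(\bt_{x_q},\tp_{x_p})$, matching the configuration entering $G_\pi^{\beta^{(2)}}$. Schematically this produces an identity
\[
w_W\cdot G_\pi^{\beta^{(1)}} \;=\; G_\pi^{\beta^{(2)}}\cdot w_E,
\]
where $w_W$ and $w_E$ are the weights of the intertwiner at the West and East ends respectively, each pinned to a single compatible tile by the row-type boundary constraints inherited from the surrounding pipe dream.

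The final step is to verify that $w_W=w_E$. At each end, the boundary data---pipe/no-pipe at the inner corners inherited from the strip, together with pipe/no-pipe at the outer corners inherited from the far boundary of the pipe dream---leaves a unique compatible filling among the seven tiles of the upward diamond; an inspection of the weight table preceding Proposition~\ref{prop:ybe2} shows that in both cases this filling is an elbow tile of weight $A+B$. Cancelling the common factor $A+B$ from both sides of the displayed identity then yields the lemma. The main obstacle is precisely this last bookkeeping step: because the $\tp$ and $\bt$ rows propagate in opposite directions, one must be careful in matching the four corners of the upward diamond to the rows' West (or East) ends, and then enumerate the seven tiles to identify the unique compatible filling at each end. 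Once this orientation and tile-identification check is done, the remainder of the proof is purely mechanical---$n$ invocations of Proposition~\ref{prop:ybe2}.
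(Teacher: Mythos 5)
Your approach is exactly the paper's: insert an upward-diamond intertwiner at the West end of the two-row strip, slide it to the East end with $n$ applications of Proposition~\ref{prop:ybe2}, identify the forced tile at each end, and cancel the common weight. The reduction to the two-row strip and the analysis of how $\varphi$ changes (transposing the labels $p<q$ at positions $k,k+1$) are also correct.

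However, the one step you single out as the crux --- identifying the forced boundary tiles --- is carried out incorrectly. At the West end the boundary data (pipe entering the diamond at its outer lower edge, pipe exiting into the West end of the $\tp$ row, the other two edges empty) forces the single pipe to pass \emph{straight through} the diamond, i.e.\ the tile is the vertical-straight tile, of weight $A+B+x_p-x_q$; it is not an elbow of weight $A+B$ (an elbow would turn the pipe into the $\bt$ row or back out the other outer edge, contradicting the boundary conditions). Likewise at the East end the forced tile is the horizontal-straight tile, again of weight $A+B+x_p-x_q$. Your conclusion survives because $w_W=w_E=A+B+x_p-x_q$ is still a common nonzero factor that can be cancelled in the polynomial ring, but the ``inspection of the weight table'' you invoke was not actually performed correctly, and since you flag this bookkeeping as the main obstacle, the error matters: had the two forced tiles turned out to have different weights, the argument would only give $w_W\,G_\pi^{\beta^{(1)}}=w_E\,G_\pi^{\beta^{(2)}}$ and the lemma would fail. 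Redo the corner-matching at both ends and you will recover the paper's proof verbatim.
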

In other words, one can freely switch $\bt$s and $\tp$s.
\begin{proof}
We consider the effect of inserting one upward diamond on the left of the grid at the level of the two rows
of interest:
\[
Z^{\beta';\beta''}_\pi:=\begin{tikzpicture}[scale=.6,baseline=(current  bounding  box.center)]
\draw[fill=\linkpatternboxcolor] (0,0) -- (0,1.5) -- (-.5,2.5) -- (0,3.5) -- (0,5) -- (5,5) -- (5,3.5) -- (5.5,2.5) -- (5,1.5) -- (5,0) -- cycle;
\draw (0,1.5) -- (5,1.5);
\draw (.5,2.5) -- (5.5,2.5);
\draw (0,3.5) -- (5,3.5);
\foreach\x in {1,...,5} \draw (\x-1,1.5) -- (\x-.5,2.5) -- (\x-1,3.5);
\foreach\x in {1,...,5} \draw[-latex] (\x-.3,2.8) -- ++(.2,.4);
\foreach\x in {1,...,5} \draw[-latex] (\x-.2,1.8) -- ++(-.2,.4);
\draw[-latex] (0,2.25) -- ++(0,.5);
\node at (-1,.75) {$\beta''$}; \node at (2.5,.75) {$\cdots$};
\node at (-1,4.25) {$\beta'$}; \node at (2.5,4.25) {$\cdots$};
\begin{scope}[every node/.style={circle,inner sep=1pt,fill=\linkpatternedgecolor}]
\node at (-.25,2) {};
\node at (5.25,2) {};
\end{scope}
\node at (5.75,3) {$\ss x_i$};
\node at (5.75,2) {$\ss x_j$};
\end{tikzpicture}
\]
where as usual the filling must produce the connectivity $\pi$, and $a$ and $b$ are indices to be determined.

With the chosen boundary conditions, by consulting the list of allowed tiles for upward diamonds,
we note that the only possibility for the inserted diamond is \tikz[xscale=.58,rotate=-45,baseline=-3pt]{\plaq{v}}.
Removing this diamond
then reproduces the pictorial representation of $G_\pi^{\beta',\tp,\bt,\beta''}$.
We therefore declare $i$ and $j$ to be the labels of the rows $\tp$ and $\bt$ respectively ($i<j$), so that
\[
Z^{\beta';\beta''}_\pi = (A+B+x_i-x_j) G_\pi^{\beta',\tp,\bt,\beta''} 
\]
On the other hand, one can apply proposition~\ref{prop:ybe2} to $Z^{\beta';\beta''}_\pi$ repeatedly, resulting in
\[
Z^{\beta';\beta''}_\pi=\begin{tikzpicture}[scale=.6,baseline=(current  bounding  box.center)]
\draw[fill=\linkpatternboxcolor] (0,0) -- (0,1.5) -- (-.5,2.5) -- (0,3.5) -- (0,5) -- (5,5) -- (5,3.5) -- (5.5,2.5) -- (5,1.5) -- (5,0) -- cycle;
\draw (0,1.5) -- (5,1.5);
\draw (-.5,2.5) -- (4.5,2.5);
\draw (0,3.5) -- (5,3.5);
\foreach\x in {1,...,5} \draw (\x,1.5) -- (\x-.5,2.5) -- (\x,3.5);
\foreach\x in {1,...,5} \draw[-latex] (\x-.8,1.8) -- ++(.2,.4);
\foreach\x in {1,...,5} \draw[-latex] (\x-.7,2.8) -- ++(-.2,.4);
\draw[-latex] (5,2.25) -- ++(0,.5);
\node at (-1,.75) {$\beta''$}; \node at (2.5,.75) {$\cdots$};
\node at (-1,4.25) {$\beta'$}; \node at (2.5,4.25) {$\cdots$};
\begin{scope}[every node/.style={circle,inner sep=1pt,fill=\linkpatternedgecolor}]
\node at (-.25,2) {};
\node at (5.25,2) {};
\end{scope}
\node at (-0.75,3) {$\ss x_j$};
\node at (-0.75,2) {$\ss x_i$};
\end{tikzpicture}
\]
This time the only allowed tile for the inserted diamond is \tikz[xscale=.58,rotate=-45,baseline=-3pt]{\plaq{h}}.
Once we remove it, we recognize the pictorial representation of $G_\pi^{\beta',\bt,\tp,\beta''}$: in particular the parameters
$x_i$ and $x_j$ have been switched, as should be -- the numbering of the pipes has been modified by the change of binary string.
This results in
\[
Z^{\beta';\beta''}_\pi = (A+B+x_i-x_j) G_\pi^{\beta',\bt,\tp,\beta''} 
\]
\end{proof}

Putting together lemmas \ref{lem:beta1} and \ref{lem:beta2} immediately leads to our first theorem:

\begin{thm}\label{thm:beta}
  $G_\pi := G_\pi^\beta$ is independent of $\beta$.
\end{thm}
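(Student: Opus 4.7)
The plan is to deduce this immediately by combining lemmas \ref{lem:beta1} and \ref{lem:beta2}. Lemma \ref{lem:beta2} says $G_\pi^\beta$ is invariant under swapping any adjacent pair in $\beta$ (and hence under arbitrary reordering of the letters of $\beta$), while lemma \ref{lem:beta1} says it is invariant under changing the value of the last letter $\beta_m$. Together, these two moves ought to connect any $\beta$ to any $\beta'$ in $\{\tp,\bt\}^m$, so no further combinatorial input is needed.

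The cleanest way I would package this is by induction, reducing each hybridization to a canonical representative, say the all-$\tp$ sequence $(\tp,\ldots,\tp)$. Induct on the number $k$ of $\bt$s appearing in $\beta$. The base case $k=0$ is tautological. For the inductive step, locate the rightmost $\bt$ in $\beta$, say at position $i$; apply lemma \ref{lem:beta2} successively to swap this $\bt$ with each of the $\tp$s in positions $i{+}1,\ldots,m$, sliding it all the way to the last position. Now $\beta_m=\bt$, so by lemma \ref{lem:beta1} we may replace it with $\tp$ without changing $G_\pi^\beta$. The resulting hybridization has $k-1$ copies of $\bt$, and the induction hypothesis gives $G_\pi^\beta=G_\pi^{(\tp,\ldots,\tp)}$.

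Applying this reduction to both $\beta$ and $\beta'$ yields the common value $G_\pi^{(\tp,\ldots,\tp)}$, which proves independence of $\beta$. I do not anticipate any real obstacle: all the nontrivial work has already been absorbed into the two preceding lemmas (which in turn rest on the Yang--Baxter propositions \ref{prop:ybe} and \ref{prop:ybe2} and the crossing symmetry of lemma \ref{lem:fluxr}), and what remains is the purely combinatorial observation that adjacent swaps together with toggling the last bit generate all transformations on $\{\tp,\bt\}^m$.
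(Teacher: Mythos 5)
Your argument is correct and is exactly the paper's: the paper likewise deduces Theorem \ref{thm:beta} directly from lemmas \ref{lem:beta1} and \ref{lem:beta2}, leaving the (easy) combinatorial step implicit, which you have simply spelled out. Your reduction to the all-$\tp$ hybridization by sliding the rightmost $\bt$ to the last position and toggling it is a valid way to make that step explicit.
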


A similar proof appears in \cite[theorem 3]{KU-hybrid}, which concerns
Schubert polynomials rather than generic pipe dream polynomials.
Instead of the Yang--Baxter-based argument here, that proof uses a
bijection. However, it only applies to single Schubert polynomials
$S_\pi(0,\ul y)$. While a bijective proof of the double Schubert
polynomial may be possible, it would require decoration as in
\cite{DaojiBijective}, which the Yang--Baxter-based argument does not.

The implication of proposition \eqref{prop:ybe} (the other Yang--Baxter
equation, where both rows are type $\tp$) for $G_\pi$ is more subtle,
and is the subject of the next theorem.
Recall that the inversion number of $\pi$ is
\[
  \ell(\pi) := \# \left\{i<j \mid \pi(j)>\pi(i) \right\}
\]
Write $r_i$ for the elementary transposition $(i\ i+1)$,
and make it act on $\ZZ[x_1,\ldots,x_m]$ by permutation of variables.

\begin{thm}\label{thm:ind}
  $G_\pi$ is uniquely determined by the following inductive definition:
  \begin{itemize}
  \item If $\pi$ is decreasing (that is, $\ell(\pi)=m(m-1)/2$ is maximal), then
    \begin{equation}\label{eq:base}
      G_\pi=\prod_{i=1}^m \left((A+B)\prod_{j=1}^{\pi(i)-1} (A+x_i-y_j)\prod_{j=\pi(i)+1}^n (B-x_i+y_j)\right)
    \end{equation}
  \item Given $\pi\in\SS_{m,n}$, define $\pi'=\pi r_i$ by switching
    $\pi(i)$ and $\pi(i+1)$. Then
    \begin{equation}\label{eq:ind}
      G_{\pi} = \frac{1}{x_i-x_{i+1}}((A+B) G_{\pi'}-(A+B+x_i-x_{i+1})r_i G_{\pi'})
    \end{equation}
  \end{itemize}
\end{thm}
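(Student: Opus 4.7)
The plan is to dispatch the base case \eqref{eq:base} by a direct enumeration of hybrid generic pipe dreams, and then derive the recurrence \eqref{eq:ind} via a Yang--Baxter argument based on Proposition~\ref{prop:ybe} and the independence of hybridization granted by Theorem~\ref{thm:beta}. Uniqueness of $G_\pi$ follows by descending induction on $m(m-1)/2-\ell(\pi)$, since every non-decreasing $\pi\in\SS_{m,n}$ has an ascent position $i$ with $\pi(i)<\pi(i+1)$, so $\ell(\pi r_i)=\ell(\pi)+1$ and \eqref{eq:ind} expresses $G_\pi$ in terms of $G_{\pi r_i}$; iterating terminates at the decreasing permutations, one per $m$-subset of $[n]$, that are covered by the base case.

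For the base case I would take $\beta=(\tp,\ldots,\tp)$ and argue that, when $\pi$ is decreasing, the inequalities $\pi(k)<\pi(i)$ for all $k>i$ force a unique HGPD: in row $i$ the pipe entering from the West must cross every upward pipe from a lower row while still heading East, so columns $1,\ldots,\pi(i)-1$ carry crossing (straight) tiles, column $\pi(i)$ is the single elbow turning that pipe upward, and columns $\pi(i)+1,\ldots,n$ are blank. Reading off tile weights gives \eqref{eq:base} directly.

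For the recurrence, again set $\beta=(\tp,\ldots,\tp)$, fix $i<m$, and consider the partition function $Z$ of the configuration obtained by adjoining a rightward diamond with parameters $(x_i,x_{i+1})$ on the West of the grid between rows $i$ and $i+1$, subject to overall connectivity $\pi$. Applying Proposition~\ref{prop:ybe} once per column transports the diamond across the grid to the East, yielding a second evaluation of $Z$. On the West, both NW and SW edges of the diamond carry an entering pipe, and the two admissible $2$-in/$2$-out tile configurations contribute $(A+B)\,G_\pi$ (pipes remain in their rows, so the inner grid contributes $G_\pi$) and $(x_{i+1}-x_i)\,G_{\pi r_i}$ (pipes swap rows; the row-indexed connectivity of the grid is now $\pi r_i$, but the row-parameter assignment $x_k$ for row $k$ is unchanged, so the grid factor is $G_{\pi r_i}$ and not $r_i G_{\pi r_i}$). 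On the East, $\tp$ rows carry no pipes out, forcing the transported diamond to be the blank tile of weight $A+B+x_i-x_{i+1}$; moreover, the Yang--Baxter intertwining $R(x_i,x_{i+1})T(x_i)T(x_{i+1})=T(x_{i+1})T(x_i)R(x_i,x_{i+1})$ carried out column by column has the effect of transposing the row-parameters in the remaining grid, so the grid factor there is $r_i G_\pi$. Equating the two evaluations gives
\[
(A+B)\,G_\pi+(x_{i+1}-x_i)\,G_{\pi r_i}=(A+B+x_i-x_{i+1})\,r_i G_\pi,
\]
and solving for $G_{\pi r_i}$ yields \eqref{eq:ind} verbatim (after the substitution $\pi\leftrightarrow\pi r_i$).

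The principal subtlety I expect is bookkeeping the two distinct sources of the reflection $r_i$ appearing in \eqref{eq:ind}: on the West, the swap tile changes the grid connectivity but not the parameter assignment, producing $G_{\pi r_i}$, while on the East, the Yang--Baxter transport of the diamond across all the columns rearranges the parameters, producing $r_i G_\pi$. Getting these two manifestations of $r_i$ in their correct places is exactly what produces the precise shape of \eqref{eq:ind}, and a small check at $m=n=2$, $\pi=12$ (where the formula expresses $G_{12}$ as a sum of two HGPD contributions rather than one) is a useful sanity verification.
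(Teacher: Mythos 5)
Your proposal is correct and is essentially the paper's own argument: the base case by inspection of the unique pipe dream for decreasing $\pi$, the recurrence by dragging a rightward diamond across the grid with Proposition~\ref{prop:ybe} (forced blank on the East where no pipes exit, two options — elbow vs.\ crossing — on the West where both pipes enter), and uniqueness by induction on $\ell(\pi)$. The only difference is cosmetic: you run the train West-to-East starting from connectivity $\pi$, whereas the paper runs it East-to-West starting from $\pi'$, and your careful separation of the two sources of $r_i$ (the connectivity swap at the West crossing versus the parameter swap produced by transporting the diamond) matches the paper's bookkeeping exactly.
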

\begin{proof}
  In this proof, we shall only use ``non-hybrid'' generic pipe dreams,
  that is, the binary string $\beta=(\tp,\ldots,\tp)$. In particular
  the numbering of pipes $\varphi(i)$ coincides with the row number $i$.
  (Of course theorem \ref{thm:beta} implies that our recurrence
  (\ref{eq:ind}) holds for arbitrary hybridizations.)

  The first part follows immediately by inspection of the (unique) generic pipe dream corresponding
  to a decreasing $\pi$,
  e.g., for $\pi=431$,
  \begin{center}
    \begin{tikzpicture}
      \node[loop]{\plaq{c}&\plaq{h}&\plaq{c}&\plaq{j}\\\plaq{c}&\plaq{h}&\plaq{j}&\plaq{}\\\plaq{j}&\plaq{}&\plaq{}&\plaq{}\\};
    \end{tikzpicture}
  \end{center}
  Its contribution reproduces \eqref{eq:base}.
  
  For the second part, we consider the following picture
  \[
    Z_{\pi'}:=\begin{tikzpicture}[scale=.6,baseline=(current  bounding  box.center)]
\draw[fill=\linkpatternboxcolor] (0,0) -- (0,1.5) -- (-.5,2.5) -- (0,3.5) -- (0,5) -- (5,5) -- (5,3.5) -- (5.5,2.5) -- (5,1.5) -- (5,0) -- cycle;
\draw (0,1.5) -- (5,1.5);
\draw (-.5,2.5) -- (4.5,2.5);
\draw (0,3.5) -- (5,3.5);
\foreach\x in {1,...,5} \draw (\x,1.5) -- (\x-.5,2.5) -- (\x,3.5);
\foreach\x in {1,...,5} \draw[-latex] (\x-.8,1.8) -- ++(.2,.4);
\foreach\x in {1,...,5} \draw[-latex] (\x-.9,2.9) -- ++(.4,.2);
\draw[-latex] (4.8,2.5) -- ++(0.4,0);
\node at (2.5,.75) {$\cdots$};
\node at (2.5,4.25) {$\cdots$};
\begin{scope}[every node/.style={circle,inner sep=1pt,fill=\linkpatternedgecolor}]
\node at (-.25,2) {};
\node at (-.25,3) {};
\end{scope}
\node at (-0.85,3) {$\ss x_{i+1}$};
\node at (-0.75,2) {$\ss x_i$};
\end{tikzpicture}
\]
where we impose connectivity $\pi'$, and the $i^{\rm th}$ and
$(i+1)^{\rm st}$ rows that are displayed have been pre\"emptively
labeled with switched parameters $x_{i+1}$ and $x_i$.

Once again, there is a unique tile for the inserted diamond namely
the blank tile \tikz[xscale=.58,rotate=-45,baseline=-3pt]{\plaq{}},
and removing it results in the pictorial representation of $G_{\pi'}$
itself. Therefore $Z_{\pi'}$ is the weight of the blank tile times
$G_\pi$, except the latter has $x_i$ and $x_{i+1}$ exchanged:
\[
Z_{\pi'}= (A+B+x_i-x_{i+1})r_i G_{\pi'}
\]

Now apply repeatedly proposition~\ref{prop:ybe}: we find
\[
  Z_{\pi'} =
  \begin{tikzpicture}[scale=.6,baseline=(current  bounding  box.center)]
\draw[fill=\linkpatternboxcolor] (0,0) -- (0,1.5) -- (-.5,2.5) -- (0,3.5) -- (0,5) -- (5,5) -- (5,3.5) -- (5.5,2.5) -- (5,1.5) -- (5,0) -- cycle;
\draw (0,1.5) -- (5,1.5);
\draw (.5,2.5) -- (5.5,2.5);
\draw (0,3.5) -- (5,3.5);
\foreach\x in {1,...,5} \draw (\x-1,1.5) -- (\x-.5,2.5) -- (\x-1,3.5);
\foreach\x in {1,...,5} \draw[-latex] (\x-.3,2.8) -- ++(.2,.4);
\foreach\x in {1,...,5} \draw[-latex] (\x-.4,1.9) -- ++(.4,.2);
\draw[-latex] (-.2,2.5) -- ++(0.4,0);
\node at (2.5,.75) {$\cdots$};
\node at (2.5,4.25) {$\cdots$};
\begin{scope}[every node/.style={circle,inner sep=1pt,fill=\linkpatternedgecolor}]
\node at (-.25,2) {};
\node at (-.25,3) {};
\end{scope}
\node at (5.75,3) {$\ss x_{i}$};
\node at (5.85,2) {$\ss x_{i+1}$};
\end{tikzpicture}
\]
At the end, we find a situation where the inserted diamond has two
options: either \tikz[xscale=.58,rotate=-45,baseline=-3pt]{\plaq{c}}
or \tikz[xscale=.58,rotate=-45,baseline=-3pt]{\plaq{a}}.
In the first case, removing it changes the connectivity from $\pi'$ to
$\pi=\pi' r_i$, whereas the second case leaves the connectivity
unchanged. All in all, one finds
\[
Z_{\pi'}=(x_{i+1}-x_{i}) G_\pi + (A+B) G_{\pi'}
\]
Combining those two equalities results in \eqref{eq:ind}.

Finally, by induction on $m(m-1)/2-\ell(\pi)$, \eqref{eq:base} and \eqref{eq:ind} define $G_\pi$ uniquely:
\eqref{eq:base} is the base case,
and if $\ell(\pi)<m(m-1)/2$, there exists an $i$ such that $\pi(i)<\pi(i+1)$, so that \eqref{eq:ind}
expresses $G_\pi$ in terms of $G_{\pi'}$ with 
$\ell(\pi')=\ell(\pi)+1$.
\end{proof}

\subsection{The mirror image map}\label{ssec:mirror}
We describe a mapping between ``opposite'' hybridizations:
\begin{lem}\label{lem:mirror}
Let $\beta\in \{\tp,\bt\}^m$, and $\bar\beta$ be the opposite hybridization defined by $\bar\beta_i\ne \beta_i$ for all $i=1,\ldots,m$.
Then the operation consisting in taking the left-right mirror image
is a bijection between hybrid generic pipe dreams of type $\beta$ with connectivity $\pi$
and
hybrid generic pipe dreams of type $\bar\beta$ with connectivity $\gamma_n\pi\gamma_m$.
\end{lem}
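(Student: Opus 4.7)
The plan is to verify that horizontal reflection $R$ of the grid is the desired bijection, where $R$ reverses the column order and flips each tile left-to-right. Three checks suffice: (a) $R$ sends valid tilings of type $\beta$ to valid tilings of type $\bar\beta$; (b) the connectivity transforms as $\pi \mapsto \gamma_n\,\pi\,\gamma_m$; (c) $R$ is an involution, hence a bijection.

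For (a), I examine $R$ tile by tile. The straight tiles \plaqctr{c}, \plaqctr{h}, \plaqctr{v} and the blank \plaqctr{} are horizontally symmetric and thus fixed by $R$. The three $\tp$ elbow tiles \plaqctr{a}, \plaqctr{j}, \plaqctr{r}, which route a West-entering pipe North or Northeast, are horizontal mirror images of the three $\bt$ elbow tiles \plaqctr{b}, \plaqctr{k}, \plaqctr{i}, which route an East-entering pipe North or Northwest. Hence $R$ interchanges the allowed tile sets of $\tp$- and $\bt$-rows, which flips $\beta$ to $\bar\beta$. The remaining conditions -- no pipes starting or terminating in the middle, no pipes entering from the South, pipes entering but not exiting each row -- are manifestly preserved.

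For (b), I track the pipe labels and exit columns. Pipes are labeled counterclockwise starting from the NW corner, so the $\tp$-rows receive their labels on the West side in top-to-bottom order while the $\bt$-rows receive theirs on the East side in bottom-to-top order. Under $R$, a West-entering pipe in row $i$ becomes an East-entering pipe in the same row. A short case analysis -- splitting the old labels $\{1,\ldots,m\}$ into those originating on the West (one per $\tp$-row, top-to-bottom) and those originating on the East (one per $\bt$-row, bottom-to-top) -- shows that in both cases the old label $\ell$ is sent to $m+1-\ell = \gamma_m(\ell)$, the reversal coming from the swap of the top-to-bottom and bottom-to-top conventions between the two sides. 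Meanwhile, the exit column $\pi(\ell)$ clearly reflects to $n+1-\pi(\ell) = \gamma_n(\pi(\ell))$. Combining, the new connectivity $\pi'$ satisfies $\pi'(\gamma_m(\ell)) = \gamma_n(\pi(\ell))$ for every $\ell$, that is $\pi' = \gamma_n\,\pi\,\gamma_m$. Part (c) is immediate since $R^2 = \mathrm{id}$.

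The only step requiring real care is the relabeling in (b): because the counterclockwise convention treats the West and East sides asymmetrically, one must verify separately that the two batches of labels both transform by the same involution $\ell \mapsto \gamma_m(\ell)$ -- this is where the hypothesis that $\beta$ and $\bar\beta$ are opposite row-by-row is used. Everything else reduces to routine inspection of the tile list.
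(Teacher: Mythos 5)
Your proof is correct and follows essentially the same route as the paper's: check that the $\tp$- and $\bt$-tile sets are mirror images of each other (hence $\beta\mapsto\bar\beta$), that exit columns reflect (hence $\gamma_n$), and that the counterclockwise labeling convention forces the pipe labels to reverse (hence $\gamma_m$). Your case analysis for the relabeling in (b) just spells out in more detail what the paper asserts in one sentence.
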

\begin{proof}
  First, one checks that the allowed tiles on rows of types $\tp$ and
  $\bt$ are mirror images of each other. Secondly, the sides on which pipes enter (West vs East) are exchanged by
  the mirror image map, hence $\beta\leftrightarrow\bar\beta$. Finally, not only are ending locations of pipes on the North flipped
  (hence the $\gamma_n$), but also their numbering is reversed (hence the $\gamma_m$).
\end{proof}

As an immediate corollary, by comparing the contributions of a pipe dream and of its mirror image to $G_\pi$ and $G_{\gamma_n\pi\gamma_m}$
respectively, one finds:
\begin{cor}The following equality holds:
\[
G_\pi (A,B,x_1,\ldots,x_m,y_1,\ldots,y_n)
=
G_{\gamma_n\pi\gamma_m}(B,A,-x_m,\ldots,-x_1,-y_n,\ldots,-y_1)
\]
\end{cor}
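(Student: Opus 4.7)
The plan is to prove the corollary by composing lemma~\ref{lem:mirror} (the bijection on pipe dreams) with a tile-by-tile weight comparison, using theorem~\ref{thm:beta} to have the freedom to compute both sides with the most convenient hybridizations.

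First, fix any $\beta\in\{\tp,\bt\}^m$. By theorem~\ref{thm:beta}, the left-hand side equals the sum over HGPDs $D$ of type $\beta$ with connectivity $\pi$ of the tile product. For the right-hand side, write $G_{\gamma_n\pi\gamma_m}$ using hybridization $\bar\beta$, so it is a sum over HGPDs $D'$ of type $\bar\beta$ with connectivity $\gamma_n\pi\gamma_m$; then specialize the formal variables $(A',B',x'_k,y'_l)$ to $(B,A,-x_{m+1-k},-y_{n+1-l})$. Lemma~\ref{lem:mirror} gives a bijection $D\leftrightarrow D'$ via left-right reflection, so it suffices to check that for every individual tile, the weight contributed by $D$ is equal to the specialized weight contributed by the mirrored tile in $D'$.

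For this I would record two facts: (a) the tile at $(i,j)$ in $D$ becomes the tile at $(i,n+1-j)$ in $D'$, with row type flipped from $\beta_i$ to $\bar\beta_i$ and shape (elbow/straight/blank) preserved, and (b) the numbering of pipes is reversed by the mirror, so the pipe entering row $i$ of $D'$ is $\bar\varphi(i)=\gamma_m(\varphi(i))=m+1-\varphi(i)$. Then there are only three shapes to verify. For example, a straight tile in a $\tp$ row of $D$ contributes $A+x_{\varphi(i)}-y_j$, while its mirror is a straight tile in a $\bt$ row of $D'$ contributing formally $B'-x'_{\bar\varphi(i)}+y'_{n+1-j}$; specializing gives $A-(-x_{\varphi(i)})+(-y_j)=A+x_{\varphi(i)}-y_j$. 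The blank case in a $\tp$ row yields $B-x_{\varphi(i)}+y_j$ on both sides by the analogous substitution, and the elbow case is immediate since the weight $A+B$ is symmetric under $A\leftrightarrow B$. The checks for a $\bt$ row are obtained by swapping the roles of straight and blank tiles and run identically.

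Summing the matching tile weights over all positions, the product of weights of $D$ equals the specialized product of weights of $D'$; summing over the bijection of lemma~\ref{lem:mirror} yields the corollary. No serious obstacle is anticipated: the only potentially confusing step is keeping track of the double inversion in the pipe numbering ($\bar\varphi=\gamma_m\varphi$ causes $x'_{\bar\varphi(i)}$ to specialize back to $-x_{\varphi(i)}$) and in the column index ($y'_{n+1-j}\mapsto -y_j$), but once these are set up correctly every tile check is a single line.
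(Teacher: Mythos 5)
Your proof is correct and is essentially the paper's argument: the corollary is obtained by composing the mirror bijection of lemma~\ref{lem:mirror} with the tile-by-tile weight comparison under the substitution $(A,B,x_k,y_l)\mapsto(B,A,-x_{m+1-k},-y_{n+1-l})$, exactly as you do. The only slip is the word ``Summing'' where you mean ``Multiplying'' the tile weights over positions, which is clearly unintentional.
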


This is to be compared with \S\ref{ssec:invo}.

\section{Pipe dreams and equivariant cohomology}

Our reference for equivariant cohomology is \cite{AndersonFulton}.

\subsection{A divided difference operator recurrence}

We denote by $[X]$ the torus-equivariant cohomology class of a subvariety $X$
of a vector space $V$ with a torus action, and identify it with a polynomial
via the isomorphism
$H^*_T(V)\cong H^*_T(pt)\cong \text{Sym}(T^*)$.  We recall a basic
lemma from \cite{BBM} about the geometry of divided difference
operators, as used already in \cite[\S4.5]{artic32},
\cite[\S4]{artic33}.  For notational convenience, we state it in less
than maximum generality.

\begin{lem}\label{lem:divdiff}
  Let $T'\times GL_2$ act on a vector space $V$, and $X \subseteq V$
  a subvariety invariant under the subgroup $T' \times B^-_2$
  (where $B^-_2\leq GL_2$ is a Borel subgroup, and $T_2\leq B^-_2$ a
  Cartan subgroup). Let $T = T' \times T_2$, so $X$ is $T$-invariant.
  Then
  \begin{itemize}
  \item $GL_2\cdot X \subseteq V$ is closed and irreducible,
  \item Under the isomorphism $H^*_{T}(V) \iso H^*_{T'}(pt)\tensor H^*_{B^-_2}(pt) \iso
    H^*_T(pt)[p_1,p_2]$, the natural action of $r \in W(GL_2) \iso \ZZ/2$
    is by $p_1 \leftrightarrow p_2$. 
  \end{itemize}
  For $x\in X$ a general point,
  let $d := \#\{ B^-_2\backslash B^-_2\, g\in GL_2,\ g\colon g\cdot x \in X\}$.
  There are two cases:
  \begin{enumerate}
  \item If $d = \infty$, then $[X] = [X]|_{p_1\leftrightarrow p_2}$.
    In particular this occurs if $X$ is $(T \times GL_2)$-invariant.
  \item If $d < \infty$ (as happens if $X$ is not $GL_2$-invariant),
    then $[GL_2\cdot X] 
    = \frac{1}{(p_2-p_1)d}\left( [X] - r\cdot [X] \right)$.
    Also, $GL_2\cdot X$ contains $X$ in codimension $1$.
  \end{enumerate}
\end{lem}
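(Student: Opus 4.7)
The plan is to realize $GL_2 \cdot X$ as the image of the associated bundle $E := GL_2 \times_{B^-_2} X$ under the evaluation map $\phi\colon E \to V$, $[g,x] \mapsto gx$, and then to compute its equivariant class by localization on the two $T_2$-fixed points of $GL_2/B^-_2 \iso \PP^1$.

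First I would verify the structural claims. The map $[g,x]\mapsto ([g], gx)$ gives a closed embedding of $E$ into $\PP^1 \times V$, since its image is cut out by the closed condition $g^{-1}v \in X$; composing with $\mathrm{pr}_V\colon \PP^1 \times V \to V$, which is proper, shows that $GL_2 \cdot X = \phi(E)$ is closed. Irreducibility follows from $E$ being an $X$-bundle over $\PP^1$. The identification $H^*_T(V) \iso H^*_{T'}(pt)\tensor \ZZ[p_1,p_2]$ with $r$ acting by $p_1 \leftrightarrow p_2$ is standard, using contractibility of $V$, the deformation retraction $B^-_2 \simeq T_2$, and identifying $p_1, p_2$ with the characters of the standard $T_2$-representation.

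Next, I would compare dimensions: $\dim E = \dim X + 1$, so the generic fibers of $\phi$ have positive dimension iff $d = \infty$. In that case $\dim GL_2 \cdot X \leq \dim X$ together with $X \subseteq GL_2\cdot X$ and irreducibility forces $GL_2 \cdot X = X$, so $X$ is $GL_2$-invariant and $[X]\in H^*_{T'\times GL_2}(pt)$ is symmetric in $p_1, p_2$. If $d < \infty$, then $\phi$ is generically finite of degree $d$ onto $GL_2 \cdot X$, which thus has dimension $\dim X + 1$ and contains $X$ in codimension one; proper pushforward yields $\phi_*[E] = d\,[GL_2 \cdot X]$.

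The heart of the argument is to evaluate $\phi_*[E]$ using the factorization $E \into \PP^1 \times V \xrightarrow{\mathrm{pr}_V} V$ and equivariant localization on $\PP^1$. At the two $T_2$-fixed points $[B^-_2]$ and $[w B^-_2]$, the fiber of $E$ in $V$ is identified with $X$ and $wX$, so the restrictions of $[E]\in H^*_T(\PP^1 \times V)$ are $[X]$ and $[wX] = r\cdot [X]$ respectively. With tangent weights $\pm(p_2 - p_1)$ at the two fixed points, the Atiyah--Bott formula gives
\[
\phi_*[E] \;=\; \frac{[X]}{p_2 - p_1} \;+\; \frac{r\cdot [X]}{p_1 - p_2} \;=\; \frac{[X] - r\cdot [X]}{p_2 - p_1},
\]
and dividing by $d$ produces the stated formula. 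The main obstacle I anticipate is the sign bookkeeping in the localization and the identification of the tangent weights with $\pm(p_2-p_1)$, which pins down the sign in the divided difference; I would sanity-check this against a small test case, such as $X$ a $T_2$-weight line inside $V \iso \CC^2$.
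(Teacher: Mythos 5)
The paper does not actually prove this lemma --- it is recalled from \cite{BBM} with a pointer to earlier uses --- so there is no in-paper argument to compare against. Your proof via the associated bundle $E=GL_2\times_{B^-_2}X \into \PP^1\times V \to V$ and Atiyah--Bott localization at the two $T_2$-fixed points of $\PP^1$ is the standard way to establish exactly this statement, and the structure of your argument is sound: properness of $\mathrm{pr}_V$ gives closedness, the bundle structure gives irreducibility, the fiber of $\phi$ over a general $x\in X$ is $\{hB^-_2: h^{-1}x\in X\}$, which matches the set defining $d$ under $h\mapsto h^{-1}$, and $GL_2$-equivariance of $\phi$ promotes ``finite fiber over a general point of $X$'' to ``generically finite of degree $d$ over $GL_2\cdot X$'', giving both $\phi_*[E]=d\,[GL_2\cdot X]$ and the codimension-one claim. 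Two small points deserve explicit justification in a final writeup: the identification $i_p^*[E]=[E_p]$ (i.e.\ $[X]$ and $[wX]=r\cdot[X]$ at the two fixed points) uses that $E$ is flat over $\PP^1$, so restriction to a fiber commutes with taking classes; and, as you note, the tangent weights $\pm(p_2-p_1)$ and hence the overall sign depend on whether $B^-_2$ is the lower-triangular Borel and on whether $p_i$ denotes the weight of a coordinate vector or a coordinate function. Your proposed sanity check on a weight line in $\CC^2$ (or against the paper's own application, where the formula must reproduce $[Z]=-\partial_i[E_{\pi}]$ with $p_1=x_i$, $p_2=x_{i+1}$, $d=1$) does pin the sign down to the stated $\frac{1}{(p_2-p_1)d}\left([X]-r\cdot[X]\right)$, so I see no gap.
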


In most cases of interest $d$ turns out to be $1$. We now apply this lemma to our setting.

\begin{thm}\label{thm:ind2}
  The equivariant cohomology classes $[E_\pi]$ satisfy the following:
  \begin{itemize}
  \item If $\pi$ is decreasing,
    \begin{equation}\label{eq:base2}
      [E_\pi] = \prod_{i=1}^m \left(\prod_{j=1}^{\pi(i)-1} (A+x_i-y_j)\prod_{j=\pi(i)+1}^n (B-x_i+y_j)\right)
    \end{equation}
  \item Given $\pi\in\SS_{m,n}$ and $\pi'=\pi r_i$,
    \begin{equation}\label{eq:ind2}
      [E_{\pi'}] = \frac{1}{x_i-x_{i+1}} \bigg(
        (A+B)\, [E_{\pi}] - (A+B+x_i-x_{i+1})\, [E_{\pi}]|_{x_i\leftrightarrow x_{i+1}}
      \bigg)
    \end{equation}
  \end{itemize}
\end{thm}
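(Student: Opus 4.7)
The plan is to verify the base case and the recurrence separately.

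For the base case (\ref{eq:base2}), I would identify $E_\pi$ explicitly when $\pi$ is decreasing with the coordinate subspace $L_\pi\subseteq V$ defined by $X_{ij}=0$ for $j<\pi(i)$ and $Y_{ji}=0$ for $j>\pi(i)$. The containment $L_\pi\subseteq E$ follows from the decreasing hypothesis: for $i<i'$ each summand $X_{ij}Y_{ji'}$ of $(XY)_{ii'}$ requires $\pi(i)\le j\le \pi(i')$, an empty range; symmetrically $YX$ is upper triangular. Counting free coordinates yields $\dim L_\pi=m(n+1)=\dim E_\pi$ by lemma~\ref{lem:dim}, and $L_\pi$ contains the generic orbit point $(s\pi,\pi^T t)$ from $E_\pi^\circ$. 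Since $L_\pi$ is linear (hence irreducible) with the correct dimension and meets $E_\pi^\circ$, we conclude $L_\pi=E_\pi$, and its class is the product of the weights of the vanishing coordinates, which is exactly (\ref{eq:base2}).

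For the recurrence (\ref{eq:ind2}), I would apply lemma~\ref{lem:divdiff} with $GL_2\subseteq GL_m$ acting nontrivially on rows $i,i+1$ of $X$ and on columns $i,i+1$ of $Y$ by the inverse, so that the characters of $T_2$ are $x_i,x_{i+1}$. Since $B^-_2\leq B_-$ preserves $E_\pi$ by proposition~\ref{prop:irr}, and a generic point of $E_\pi^\circ$ has the distinct-eigenvalue property of lemma~\ref{lem:dense} which forces $d=1$, the lemma produces
\[ [GL_2\cdot E_\pi]=\frac{[E_\pi]-r_i[E_\pi]}{x_i-x_{i+1}}. \]
The plan is then to combine this with a supplementary geometric identity
\[ (A+B)\,[GL_2\cdot E_\pi]=[E_{\pi'}]+r_i[E_\pi], \]
which rearranges directly into (\ref{eq:ind2}); I have verified this identity by hand in the smallest nontrivial case $m=n=2$, $\pi=21$, $\pi'=12$, where $[GL_2\cdot E_{21}]=A+B+y_2-y_1$ and the identity reproduces the correct $[E_{12}]$.

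The content of the supplementary identity is that the scheme-theoretic intersection of $GL_2\cdot E_\pi$ with a $T$-invariant hypersurface $H\subseteq V$ of weight $A+B$ decomposes, set-theoretically and generically reduced, as $E_{\pi'}\cup(r_i\cdot E_\pi)$. Natural candidates for $H$ are a diagonal entry $(XY)_{kk}$, which is a weight vector of weight $A+B$, or a flux-type combination $\sum_{(a,b)}X_{ab}Y_{ba}$ from \S\ref{ssec:flux} canonically attached to the swap $\pi\leftrightarrow\pi'$. Pinning down the correct $H$ and verifying the decomposition together with generic reducedness along both components is the main obstacle; a tangent-space argument at generic points of $E_{\pi'}$ and of $r_i\cdot E_\pi$, modeled on the one in lemma~\ref{lem:genred}, should settle this and simultaneously rule out embedded components that could corrupt the class identity.
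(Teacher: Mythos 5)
Your base case is correct and is essentially the paper's argument (the paper extracts the linear equations from the rank conditions of proposition~\ref{prop:eqs} and counts them against the codimension; your direct verification that $L_\pi\subseteq E$ plus the dimension count and the presence of $(s\pi,\pi^Tt)$ gives the same conclusion). Your setup for the recurrence --- applying lemma~\ref{lem:divdiff} to $Z:=GL_2\cdot E_\pi$ to get $[Z]=\partial_i$-type expression --- also matches the paper, and your ``supplementary identity'' $(A+B)[Z]=[E_{\pi'}]+r_i[E_\pi]$ is algebraically equivalent to the identity the paper actually proves, namely $[Z\cap\{(XY)_{i,i+1}=0\}]=[E_\pi]+[E_{\pi'}]$ (substitute $(x_i-x_{i+1})[Z]=[E_\pi]-r_i[E_\pi]$ into one to get the other).

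The genuine gap is that the supplementary identity is exactly the hard part, and you have not proved it; worse, the form in which you have cast it is substantially harder to prove than the paper's form. The paper cuts $Z$ by the single entry $(XY)_{i,i+1}$, which is a $T$-weight vector of weight $A+B+x_i-x_{i+1}$ (not $A+B$), and the payoff is that $Z\cap\{(XY)_{i,i+1}=0\}$ lands back \emph{inside $E$}: one then invokes equidimensionality of $E$ (theorem~\ref{thm:Eequidim}) to see the intersection is a union of components of $E$, generic reducedness of $E$ (lemma~\ref{lem:genred}) to get multiplicity one, and proposition~\ref{prop:eqs}\textit{(ii),(ii')} to identify the components as exactly $E_\pi$ and $E_{\pi'}$. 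Your version requires a hypersurface $H$ of weight $A+B$ with $Z\cap H=E_{\pi'}\cup(r_i\cdot E_\pi)$, and the component $r_i\cdot E_\pi$ is \emph{not} contained in $E$ (conjugating a lower triangular $XY$ by $r_i$ moves $(XY)_{i+1,i}$ to position $(i,i+1)$), so none of that machinery applies: you cannot classify the components of $Z\cap H$ by comparing with the known components of $E$, and you would have to control the scheme $Z\cap H$ from scratch. Of your two candidates for $H$, the diagonal entry $(XY)_{kk}$ fails outright, since the diagonal entries of $XY$ are generically \emph{nonzero} on every full-rank component (lemma~\ref{lem:flux}(1)); the viable candidate is the flux difference $(YX)_{\pi(i+1)\pi(i+1)}-(XY)_{ii}$, which does vanish on both $E_{\pi'}$ and $r_i\cdot E_\pi$, but establishing that these are the \emph{only} components of $Z\cap H$ and that both occur with multiplicity one is precisely the missing geometric content. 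A secondary, smaller gap: your claim that the distinct-eigenvalue property of lemma~\ref{lem:dense} ``forces $d=1$'' is not sufficient as stated --- the paper's explicit computation shows the fiber over $B_2^-\backslash GL_2$ is a reduced point only after also excluding $(XY)_{ii}=-(XY)_{i+1,i+1}$, an extra genericity condition beyond distinctness and invertibility (harmless since one may choose the test point, but it needs to be said).
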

\begin{proof}
  For the first part, if $\pi$ is decreasing, then among the rank equations \textit{(iii)} and \textit{(iv)}
  of proposition~\ref{prop:eqs}, we
  find the linear equations
  \begin{align*}
    X_{ij}&=0\qquad j<\pi(i)
    \\
    Y_{ji}&=0\qquad j>\pi(i)
  \end{align*}
  There are $m(n-1)$ such equations, which is the codimension of
  $E_\pi$; therefore $E_\pi$ is the linear subspace given by these
  equations, and its $H_T^*$ class is the product of their weights,
  which is precisely \eqref{eq:base2}.
  
  As to the second part, we first rewrite \eqref{eq:ind2} in the more
  suggestive form
  \[
    \qquad\qquad
    - (A+B+x_i-x_{i+1}) \partial_i [E_{\pi}] = [E_{\pi}] +[E_{\pi'}],
    \qquad\quad\text{where}\quad
    \partial_i := \frac{1}{x_i-x_{i+1}}(1-r_i)
  \]
  is the \textbf{divided difference operator}. This equation is now ripe for
  lemma \ref{lem:divdiff}.
  Take $X = E_{\pi}$, let
  $V = \Mat(m,n,\CC)\times \Mat(n,m,\CC)$, the $GL_2$ action has
  $g\cdot(X,Y) := \left(
    \begin{bmatrix}      I_{i-1} \\ & \!g\! \\ && I_{m-i-1}     \end{bmatrix} X,
    Y\begin{bmatrix}     I_{i-1} \\ & \!g^{-1}\! \\ && I_{m-i-1}     \end{bmatrix} 
  \right)$, and $T' \leq B_- \times B_+$ consists of pairs $(D_1,D_2)$
  of invertible diagonal matrices where $(D_1)_{ii} = (D_1)_{i+1,i+1} = 1$.
  Hereafter identify $GL_2$ with its image inside $GL_m$. 
  
  Let $Z := (GL_2)\cdot E_{\pi}$, and call a typical element $(X,Y)$.
  Using proposition \ref{prop:irr} one can see that the functions
  $(XY)_{ii},(XY)_{i+1,i+1}$ on $E_\pi$ are independent, and hence,
  the function $(XY)_{i,i+1}$ on $Z$ is not always zero.
  However, $Z$ continues to satisfy all the other vanishing conditions
  defining the lower-upper scheme $E$.
  By failing to satisfy this one though,
  $Z$ properly contains $E_{\pi}$, hence by
  lemma \ref{lem:divdiff} has $\dim Z = \dim E_{\pi} + 1$.

  Let $Z' = Z \cap \{ (X,Y) \colon (XY)_{i,i+1} = 0 \}$,
  so $Z' \subseteq E$. Since $Z$ is irreducible, the Cartier divisor $Z'$
  is pure of codimension $1$ in $Z$. Since $E$ is equidimensional
  (theorem \ref{thm:Eequidim} in the appendix) of
  dimension $\dim E_{\pi} = \dim Z - 1 = \dim Z'$, we learn $Z'$
  must be a (potentially schemy) union of components of $E$.
  Since $Z' \subseteq E$ scheme-theoretically and $E$ is generically reduced
  (lemma \ref{lem:genred}), we learn that $Z'$ must likewise be
  generically reduced.

  It remains to show that $Z' = E_\pi \cup E_{\pi'}$, up to the possible
  appearance of embedded components (which we conjecture are not there),
  and to check that $d=1$ in the notation of lemma \ref{lem:divdiff}.
  Enough of the equations of proposition \ref{prop:eqs}(ii) hold on $Z$ to
  exclude (using the end of that proposition)
  any component other than $E_{\pi}$, $E_{\pi'}$.
  To show that $E_\rho \subseteq Z$, one need only find a point in
  $Z \setminus \Union_{\sigma \neq \rho} E_\sigma$. So pick a diagonal
  matrix $s \in GL(m)$ with distinct diagonal entries, and observe that
  $(\pi, \pi^T s) \in Z \setminus \Union_{\sigma \neq \pi} E_\sigma$, whereas
  $(r_i \pi, \pi^T s r_i) \in Z \setminus \Union_{\sigma \neq \pi'} E_\sigma$.

  Finally, to show that $d=1$ (the degree from lemma \ref{lem:divdiff}),
  it suffices to find one $x \in E_{\pi}$ such
  that $\{ B_2^-\backslash B_2^-\, g\colon g\in GL_2,\ g\cdot x \in X\}$ is a
  reduced point. Take $x$ to be the pair $(\pi,\pi^Ts)$ from above,
  with products $\pi \pi^T s = s$, $\pi^T s \pi$. Applying $g$ changes
  these products to $g s g^{-1}$, $\pi^T s g^{-1} g \pi = \pi^T s \pi$.
  For $(g \pi, \pi^T s g^{-1})$ to be in $E_\pi$,
  we need $g s g^{-1}$ to still be lower triangular, and with the same
  diagonal entries.

  This is then a computer algebra calculation.
  Let $S =  \begin{bmatrix}     a & 0 \\ 0 & e  \end{bmatrix}$ be the
  $2\times 2$ submatrix of $s$ with rows and columns $(i,i+1)$, and
  $g = \begin{bmatrix}     p & q \\ r & s  \end{bmatrix}$.
  Then the ideal
  $I = \langle (gSg^{-1})_{12} = 0, gSg^{-1})_{11} = a, gSg^{-1})_{22} = d \rangle$
  is computed to have four components: the one we want (with $q=0$),
  one with $a=e=0$, one with $a=e$, and another with $a=-e$.
  So for $a,e$ general enough to avoid those latter three
  possibilities, we must have $q=0$, i.e. $g\in B_2^-$. Hence $d=1$.  

  We have now laid the groundwork to conclude
  \begin{eqnarray*}
    [E_\pi] + [E_{\pi'}]
    &=& [Z'] \\
    &=& [Z]\, [\{ (X,Y) \colon (XY)_{i,i+1} = 0 \}] \\
    &=& (-\partial_i [E_{\pi'}])\, [\{ (X,Y) \colon (XY)_{i,i+1} = 0 \}]
        \qquad\text{using lemma \ref{lem:divdiff}} \\
    &=& (-\partial_i [E_{\pi'}])\, (A+B+x_i-x_{i+1})
  \end{eqnarray*}
  as we had hoped.
\end{proof}

Comparing theorems~\ref{thm:ind} and \ref{thm:ind2} leads to the obvious corollary
\begin{cor}[part \textit{(2)} of theorem~\ref{thm:main}] \label{cor:main}
  $     G_\pi  = (A+B)^m \, [E_\pi]    $.
\end{cor}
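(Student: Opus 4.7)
The plan is to observe that the corollary is an immediate comparison of Theorems~\ref{thm:ind} and~\ref{thm:ind2}: they assert parallel recurrences with matching base cases, so that the uniqueness clause at the end of Theorem~\ref{thm:ind} forces $G_\pi = (A+B)^m[E_\pi]$. Concretely, I would set $F_\pi := (A+B)^m[E_\pi]$ and verify that the family $(F_\pi)$ satisfies the same inductive characterization as $(G_\pi)$.

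For the base case, if $\pi$ is maximally decreasing then \eqref{eq:base2} presents $[E_\pi]$ as the product of $m(n-1)$ linear factors $\prod_i\prod_{j<\pi(i)}(A+x_i-y_j)\prod_{j>\pi(i)}(B-x_i+y_j)$; multiplying by $(A+B)^m$ inserts one factor of $A+B$ per row of the product, reproducing exactly \eqref{eq:base}. For the recurrence step, I would multiply \eqref{eq:ind2} through by $(A+B)^m$. Since $r_i$ acts only on $x_i$ and $x_{i+1}$, it commutes with multiplication by $(A+B)^m$, so I obtain
\[
  F_{\pi'} = \frac{(A+B)\,F_\pi\,-\,(A+B+x_i-x_{i+1})\,r_i F_\pi}{x_i-x_{i+1}},
\]
which is precisely \eqref{eq:ind}, read with the roles of $\pi$ and $\pi'$ interchanged to reflect that \eqref{eq:ind} expresses the less-decreasing $G$ in terms of its more-decreasing neighbour while \eqref{eq:ind2} goes the other way. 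Both relations concern the same pair $\{\pi,\pi r_i\}$, and together with the common base case they pin down the family uniquely. Hence $G_\pi = F_\pi = (A+B)^m[E_\pi]$.

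There is no substantive obstacle at this stage: all of the nontrivial content has already been done upstream, in the Yang--Baxter proof of Theorem~\ref{thm:ind} and in the geometric proof of Theorem~\ref{thm:ind2} (which itself rests on the BBM divided-difference lemma~\ref{lem:divdiff}, the description of the irreducible components of $E$ from proposition~\ref{prop:irr}, generic reducedness via lemma~\ref{lem:genred}, equidimensionality of $E$, and the $d=1$ check). What remains is a bookkeeping verification that the prefactor $(A+B)^m$ correctly reconciles the two forms of the base case; the agreement of the recurrences is then automatic.
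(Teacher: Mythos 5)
Your proposal is correct and is exactly the paper's argument: the paper derives the corollary by "comparing theorems \ref{thm:ind} and \ref{thm:ind2}", i.e., observing that $(A+B)^m[E_\pi]$ satisfies the same base case \eqref{eq:base} and the same recurrence \eqref{eq:ind} (after the harmless relabeling of $\pi\leftrightarrow\pi'=\pi r_i$) that uniquely characterize $G_\pi$. Your bookkeeping of the $(A+B)^m$ prefactor and of the direction of the two recurrences is accurate.
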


\subsection{Positivity of the weights}\label{ssec:pos}
We pause here to mention an important positivity property of the weights
introduced in \S\ref{sec:intro}, which will be used twice in what follows.
In particular we spell out the sense in which our pipe dream
formula for $[E_\pi]$ is ``positive''.

Recall that a \defn{rig} is a ``ri{\bf n}g without {\bf n}egatives'',
closed under $+$ and $\cdot$ but not necessarily $-$.
Let $M$ be the subrig of $\ZZ[x_1,\ldots,x_m,y_1,\ldots,y_n,A,B]$ 
consisting of sums of products of weights $\wt_T(\bullet)$ from \eqref{eq:wt}
(or equivalently of weights $\wt_{\tp/\bt}(\bullet)$; see the remark right after
that equation).
Define a preorder relation on $\ZZ[x_1,\ldots,x_m,y_1,\ldots,y_n,A,B]$
by $P\le Q$ iff $Q-P\in M$. This $M$ satisfies $M\cap -M=0$
(proof: consider the specialization $A=B=1$, $x_i = y_j = 0$),
which is equivalent to saying that $\le$ is in fact an order relation.

In other words, the multigrading of the co\"ordinate ring of
$\Mat(m,n,\CC)\times \Mat(n,m,\CC)$ defined by these weights 
is \textbf{positive} in the sense of \cite[theorem 8.6]{MS-book},
so we can write
\[
  X\subseteq Y\text{ and }\dim X=\dim Y\qquad\Rightarrow\qquad [X] \le [Y]
\]
with equality iff $X$ and $Y$ differ only by lower-dimensional
(possibly embedded) components.

\subsection{Nongeneric hybrid pipe dreams}\label{ssec:nongeneric}
The definition of a nongeneric hybrid pipe dream (as introduced in
\cite{KU-hybrid}) is very close to that of a generic hybrid pipe dream.
A (nongeneric) hybrid pipe dream of type $\beta$ is a filling of the
$m\times n$ grid with the following tiles:
\tikzset{loopmod/.code={\def\plaqname{loop-\the\pgfmatrixcurrentrow-\the\pgfmatrixcurrentcolumn}},loop/.append
  style={matrix}}
\begin{center}
  \begin{tikzpicture}
\node[loop,column sep=.8cm,row sep=.8cm] {
\plaq{a} & \plaq{j} & \plaq{r} & \plaq{c} & \plaq{h} & \plaq{v} \draw[red,ultra thick] (-.5,-.5) -- (.5,.5) (-.5,.5) -- (.5,-.5); & \plaq{}
\\
\plaq{b} \draw[red,ultra thick] (-.5,-.5) -- (.5,.5) (-.5,.5) -- (.5,-.5); & \plaq{k} & \plaq{i} & \plaq{c} & \plaq{h} & \plaq{v} & \plaq{}
\\
};
\draw[decoration={brace},decorate] ([yshift=-5mm]loop-2-3.south east) -- node[below=2mm] {elbow tiles} ([yshift=-5mm]loop-2-1.south west);
\draw[decoration={brace},decorate] ([yshift=-5mm]loop-2-6.south east) -- node[below=2mm] {straight tiles} ([yshift=-5mm]loop-2-4.south west);
\draw[decoration={brace},decorate,draw=none] ([yshift=-5mm]loop-2-7.south east) -- node[below=2mm] {blank tile} ([yshift=-5mm]loop-2-7.south west);
\path (loop-1-1.west) -- ++(-2,0) node {rows of type $\tp$:};
\path (loop-2-1.west) -- ++(-2,0) node {rows of type $\bt$:};
\end{tikzpicture}
\end{center}
in such a way that
\begin{itemize}
\item No pipe starts or terminates in the middle of the grid;
\item \textcolor{red}{No two pipes cross twice;}
\item In rows of type $\tp$, a pipe enters from the West end of that row, but none exit from the East end;
\item In rows of type $\bt$, a pipe enters from the East end of that row, but none exit from the West end;
\item There are no pipes coming in from the South side.
\end{itemize}
We have emphasized in red the difference with the generic case.

Given a hybridization $\beta$,
define $S_\pi\in \ZZ[x_1,\ldots,x_m,y_1,\ldots,y_n]$ associated to $\pi\in\SS_{m,n}$ to be the sum
over nongeneric hybrid pipe dreams of type $\beta$ with connectivity $\pi$ of the following product over tiles at location $(i,j)$:
\begin{itemize}
\item If $\beta_i=\tp$, straight tiles 
contribute $x_{\varphi(i)}-y_j$;
\item If $\beta_i=\bt$,
blank tiles contribute $x_{\varphi(i)}-y_j$;
\item All other tiles contribute $1$.
\end{itemize}

Given $\pi\in \SS_{m,n}$, let $j_1<\cdots<j_{n-m}$
be the zero columns of the corresponding partial permutation matrix,
and define the ``minimal extension'' $\hat\pi\in \SS_n$
by $\hat\pi(i)=\pi(i)$, $1\le i\le m$, and $\hat\pi(i+m)=j_i$, $i=1,\ldots,n-m$.
E.g., if $\pi=24\in\SS_{2,4}$, $\hat\pi=2413$.

Extending \cite[theorem~1]{KU-hybrid} to $m<n$, one has the following:

\begin{prop}\label{prop:schub}
$S_\pi$ is the double Schubert polynomial associated to the permutation $\hat\pi$.
\end{prop}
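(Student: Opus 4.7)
The plan is to reduce the statement to the $m=n$ case established in \cite[theorem~1]{KU-hybrid}, by exhibiting a weight-preserving bijection between the nongeneric hybrid pipe dreams for $\pi$ on the $m\times n$ grid and the nongeneric hybrid pipe dreams for $\hat\pi$ on the $n\times n$ grid. Once such a bijection is in hand, the square case identifies the latter sum with the double Schubert polynomial $S_{\hat\pi}$.

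First I would choose an extended hybridization $\hat\beta \in \{\tp,\bt\}^n$. A natural candidate is $\hat\beta := (\beta,\bt,\ldots,\bt)$, so that the extra pipes $m+1,\ldots,n$ enter the East side of the appended rows $m+1,\ldots,n$. Since $\hat\pi(m+k)=j_k$ with $j_1<\cdots<j_{n-m}$, the minimal extension forces each pipe $m+k$ to exit at column $j_k$ on top. I would then construct the bijection $D\mapsto \hat D$ by filling the appended rows with the canonical ``West-then-up'' routing: pipe $m+k$ enters the East of row $m+k$, travels West until column $j_k$, turns up, and continues vertically through the rows above. The increasing arrangement of the $j_k$'s together with the no-double-crossing rule forces this routing uniquely; the inverse map is restriction to the top $m$ rows (after removing the extra pipes).

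Weight preservation is the central check. In the added $\bt$ rows, the canonical routing uses only elbows, crossings, and straights, all of which contribute weight $1$ (blanks, the only expensive tile on $\bt$ rows, never occur in the swept region). The subtler part is that each extra pipe $m+k$ must traverse vertically through the original top $m$ rows at column $j_k$; this modifies the tile at $(i,j_k)$ for $i\le m$ from whatever it was in $D$ (blank, or a tile carrying a horizontal pipe) to a new tile in $\hat D$ (vertical straight, or a crossing). One must show these modifications are weight-neutral and consistent with the nongeneric restrictions.

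The main obstacle is precisely this verification in the original rows, especially those of $\tp$ type where the forbidden tile \plaq{v} is exactly the ``vertical straight''. One must argue that the bijection never forces a \plaq{v} in a $\tp$ row, and that the weight balance works out across all tile modifications at $(i,j_k)$ for $i\le m$. This should exploit the fact that column $j_k$ lies outside the image of $\pi$, together with a careful case analysis on $\beta_i$ and on the local tile structure of $D$ at $(i,j_k)$; one may need to refine the choice of $\hat\beta$ (for instance, choosing the extension letters in a way that depends on $\beta$) to make the matching automatic. Once this tile-by-tile analysis is complete, \cite[theorem~1]{KU-hybrid} applied to $\hat\pi$ on the $n\times n$ grid yields $S_\pi = S_{\hat\pi}$ as claimed.
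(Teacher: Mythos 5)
Your overall strategy (reduce to the square case of \cite[theorem~1]{KU-hybrid} via the minimal extension $\hat\pi$ and a weight-preserving bijection) is the paper's strategy, but your specific construction does not work, and the obstacle you flag at the end is fatal rather than a technicality. By appending the $n-m$ extra rows at the \emph{bottom}, you force each extra pipe to travel vertically through all $m$ original rows at column $j_k$. At a square $(i,j_k)$ with $\beta_i=\tp$ that was blank in $D$, the pass-through produces the tile \plaqctr{v}, which is precisely the forbidden tile on $\tp$ rows, so $\hat D$ is not a nongeneric hybrid pipe dream at all; and at a square with $\beta_i=\bt$ that was blank (weight $x_{\varphi(i)}-y_{j_k}$), the pass-through produces \plaqctr{v} (weight $1$), so even where the tile is legal the map is not weight-preserving. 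There is no choice of extension letters for the appended rows that avoids this, because the problem lives in the original rows, not the new ones. One would also have to re-examine the counterclockwise pipe numbering and the no-double-crossing condition, but the tile/weight failure already sinks the construction.

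The paper's fix is to change the \emph{position}, not the letters, of the extension: take $\hat\beta=(\underbrace{\bt,\ldots,\bt}_{n-m})\cup\beta$ with the new $\bt$ rows \emph{on top}. One then shows (inductively, row by row from the top, using that \plaqctr{b} is forbidden on $\bt$ rows) that in any nongeneric hybrid pipe dream of type $\hat\beta$ with connectivity $\hat\pi$ the first $n-m$ rows are completely frozen: one elbow \plaqctr{i} per row and straight tiles elsewhere, with no blank tiles. Since on $\bt$ rows only blank tiles carry a nontrivial weight, these frozen rows contribute $1$, and restriction to the last $m$ rows is the desired weight-preserving bijection, giving $S_{\hat\pi}=S_\pi$. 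Note the asymmetry you were fighting: a vertical pass-through creates \plaqctr{v}, which is allowed and weight-$1$ on $\bt$ rows but forbidden on $\tp$ rows; putting the $\bt$ rows above everything is exactly what makes all forced pass-throughs harmless.
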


\begin{proof}
  On the one hand, according to \cite[theorem 1]{KU-hybrid},
  $S_{\hat\pi}$ is the double Schubert polynomial associated to the
  permutation $\hat\pi$. We now want to reduce the general case
  to this $m=n$ case.

  Extend the hybridization $\beta$ to
  $\hat\beta=(\underbrace{\bt,\ldots,\bt}_{n-m})\cup\beta$ and
  consider the nongeneric hybrid pipe dreams of type $\hat\beta$
  associated to $\hat\pi$. If $\pi=24$ and $\beta=(\tp,\bt)$, this could be
\begin{center}
  \begin{tikzpicture}\node[loop]{
\plaq{v}&\plaq{v}&\plaq{i}&\plaq{c}\\
\plaq{i} &\plaq{c}&\plaq{h}&\plaq{c}\\
\plaq{h}&\plaq{j}&\plaq{r}&\plaq{j}\\
\plaq{}&\plaq{}&\plaq{i}&\plaq{h}\\
};
\path (loop-3-1.west) node[left] {$1$};
\path (loop-4-4.east) node[right] {$2$};
\path (loop-2-4.east) node[right] {$3$};
\path (loop-1-4.east) node[right] {$4$};
\end{tikzpicture}
\end{center}
The important observation is that the first $n-m$ rows of any such pipe dream are ``frozen'', in the sense that they are always the same:
they consist of tiles \plaqctr{i} at $(i,\hat\pi(n+1-i))$, $i=1,\ldots,m$, and of straight tiles everywhere else.
This can be easily proven inductively row by row, starting from the top. Here it is crucial that the ``bump'' tile \plaqctr{b} is forbidden.

In particular, there are no blank tiles in the first $n-m$ rows, so the contribution of any such nongeneric hybrid pipe dream
to $S_{\hat\pi}$ is concentrated in the last $m$ rows, where it restricts to a nongeneric hybrid pipe dream of type $\beta$ associated
to $\pi$. This immediately implies $S_{\hat\pi}=S_\pi$.
\end{proof}

The $B$-leading form of $G_\pi$ is by definition the sum of terms of $G_\pi$ that contain a maximal power of $B$.
We can now formulate the main theorem of this section:
\begin{thm}\label{thm:nongen}
Given $\pi\in\SS_{m,n}$, the $B$-leading form of $G_\pi$ is $B^{mn-\ell(\hat\pi)}\, S_\pi(A+x_1,\ldots,A+x_m,y_1,\ldots,y_n)$.

Furthermore, only nongeneric hybrid pipe dreams contribute to the $B$-leading form of $G_\pi$,
and their contribution is of the form $B^{mn-\ell(\hat\pi)}$ times their contribution to $S_\pi$.
\end{thm}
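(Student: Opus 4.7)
The plan is to compute the $B$-leading form of $G_\pi$ inductively from the recurrence of Theorem~\ref{thm:ind}, match it against the contributions of nongeneric HPDs, and then use the positivity of \S\ref{ssec:pos} to rule out contributions from other GPDs.

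\textbf{Inductive computation.} I proceed by induction on $m(m-1)/2-\ell(\pi)$ to show $G_\pi = B^{mn-\ell(\hat\pi)}\,\mathfrak{S}_{\hat\pi}(A+x_1,\ldots,A+x_m,y_1,\ldots,y_n) + (\text{lower powers of }B)$, where $\mathfrak{S}_{\hat\pi}$ is the double Schubert polynomial of $\hat\pi$ (identified with $S_\pi$ via Proposition~\ref{prop:schub}). For the base case of decreasing $\pi$, \eqref{eq:base} contributes per row one factor $A+B$, $\pi(i)-1$ factors $A+x_i-y_j$, and $n-\pi(i)$ factors $B-x_i+y_j$; direct Lehmer-code bookkeeping identifies $\sum_i(\pi(i)-1)$ with $\ell(\hat\pi)$, so the $B$-degree is $mn-\ell(\hat\pi)$, and the $B^0$-part $\prod_{i,\,j<\pi(i)}(A+x_i-y_j)$ matches the product formula for $\mathfrak{S}_{\hat\pi}$ (since $\hat\pi$ has a decreasing Lehmer code, hence is dominant). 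For the inductive step, pick $i$ with $\pi(i)<\pi(i+1)$ and set $\pi'=\pi r_i$; then $\hat{\pi'}=\hat\pi r_i$ has $\hat{\pi'}(i)>\hat{\pi'}(i+1)$, and substituting the inductive hypothesis for $G_{\pi'}$ into \eqref{eq:ind} yields a top-$B$ coefficient $\partial_i\mathfrak{S}_{\hat{\pi'}}(A+x,y) = \mathfrak{S}_{\hat\pi}(A+x,y)$ via the usual divided-difference recurrence for Schubert polynomials.

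\textbf{Matching nongeneric HPDs.} Inspecting the weights of \S\ref{sec:intro}, every tile weight has leading $B$-coefficient $1$ except straight tiles in $\tp$-rows and blank tiles in $\bt$-rows, whose weights $A+x_{\varphi(i)}-y_j$ have no $B$ at all. These ``non-$B$ tiles'' are precisely the ones that Proposition~\ref{prop:schub} weights $x_{\varphi(i)}-y_j$ in computing $S_\pi$, so the top-$B$ contribution of any GPD factors as $B^{mn-k}$ times its $S_\pi$-style weight under $x\to A+x$, where $k$ is its number of non-$B$ tiles. Homogeneity of $S_\pi=\mathfrak{S}_{\hat\pi}$ in degree $\ell(\hat\pi)$, together with Proposition~\ref{prop:schub}, forces each nongeneric HPD to have $k=\ell(\hat\pi)$, and summing their top-$B$ contributions recovers $B^{mn-\ell(\hat\pi)}\,S_\pi(A+x,y)$, matching the first step.

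\textbf{Positivity.} For each $j\ge mn-\ell(\hat\pi)$, the preceding two steps force the contributions of non-nongeneric GPDs to the $B^j$-coefficient of $G_\pi$ to sum to zero. Each such contribution is a nonzero product of linear factors $A+x_{\varphi(i)}-y_j$, hence a nonzero element of the positive rig $M$ of \S\ref{ssec:pos}; the identity $M\cap(-M)=0$ forbids cancellation, so each summand must vanish individually --- impossible for a nonzero product. Hence no non-nongeneric GPD contributes at $B$-power $\ge mn-\ell(\hat\pi)$, which is the second assertion of the theorem. The principal obstacle is the Schubert-polynomial bookkeeping in the first step: one has to verify that \eqref{eq:base} for decreasing $\pi$ recovers $\mathfrak{S}_{\hat\pi}$ (using that $\hat\pi$ is dominant with Rothe diagram $\{(i,j):j<\pi(i)\}$) and that the descent $i$ in the inductive step satisfies $\partial_i\mathfrak{S}_{\hat{\pi'}}=\mathfrak{S}_{\hat\pi}$.
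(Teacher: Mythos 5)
Your argument is correct, and for the first assertion it takes a genuinely different route from the paper. You push the $B$-leading form through the recurrence of theorem \ref{thm:ind}: the dominant base case ($\hat\pi$ has Rothe diagram $\{(i,j):j<\pi(i)\}$ and $\ell(\hat\pi)=\sum_i(\pi(i)-1)$ checks out), and in the inductive step $(A+B)\partial_i G_{\pi'}-r_iG_{\pi'}$ has top term $B^{mn-\ell(\hat\pi)}\partial_i\bigl[S_{\pi'}(A+x,y)\bigr]$ since $r_iG_{\pi'}$ is of strictly lower $B$-degree and $\partial_i$ commutes with the shift $x\mapsto A+x$. This is purely combinatorial/algebraic and self-contained given theorem \ref{thm:ind}. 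The paper instead obtains the first assertion geometrically: it applies a general fact about $B$-leading forms of equivariant classes under the linear projection $p_1:(X,Y)\mapsto X$, identifies $p_1(E_\pi)$ with the matrix Schubert variety $\barX_\pi$ whose class is the double Schubert polynomial, checks the generic fiber is linear (so the multiplicity is $1$), and then invokes corollary \ref{cor:main}. Your route avoids corollary \ref{cor:main} and the projection lemma at the cost of the Schubert-polynomial bookkeeping; the paper's route explains \emph{why} a Schubert polynomial appears (it is the class of the image of $E_\pi$). The second halves of the two proofs --- matching tile weights and then invoking the positivity of \S\ref{ssec:pos} --- essentially coincide.

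One imprecision in your positivity step: the $B^j$-coefficient of the weight of a GPD $\delta$ is a product of factors $A+x_{\varphi(i)}-y_j$ only when $j=mn-k(\delta)$ is the \emph{top} $B$-power of $\delta$; for smaller $j$ the coefficient also involves $A$'s and $-x+y$'s and need not lie in the rig $M$, so "each such contribution is a nonzero product of linear factors" is not literally true for every $j\ge mn-\ell(\hat\pi)$. The fix is one sentence: let $j^*=mn-\min_\delta k(\delta)$ over non-nongeneric $\delta$ of connectivity $\pi$; if $j^*\ge mn-\ell(\hat\pi)$, then at $j=j^*$ every non-nongeneric contribution is either zero or a nonzero element of $M$, and your vanishing-sum argument gives the contradiction, whence $j^*<mn-\ell(\hat\pi)$ and no non-nongeneric GPD reaches the leading form. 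The same remark applies to your use of homogeneity to force $k=\ell(\hat\pi)$ for nongeneric HPDs, which tacitly uses the same no-cancellation property; both points are easily repaired and the paper's own proof is comparably terse there.
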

\begin{proof}
Recall the projection $p_1:(X,Y)\mapsto X$ from $V\oplus W$ to $V$ where $V=\Mat(m,n,\CC)$ and $W=\Mat(n,m,\CC)$.
We split the $T$-action into $T'\times \CC^\times$ where $\text{Sym}(T'{}^*)=\ZZ[x_1,\ldots,x_m,y_1,\ldots,y_n,A]$, and $\CC^\times$
which acts on $V\oplus W$ with weight $0$ on $V$, weight $1$ on $W$, and corresponds to the $B$ variable.

We are in the set-up of \cite[\S3.1]{artic39}: consider
the projection $\barX_\pi:=p_1(E_\pi)$ of $E_\pi$ (which is closed and equal to $E_\pi \cap V$).
We apply \cite[proposition~3]{artic39}: the $B$-leading form of $[E_\pi]$ is $\mu \, B^d \, [\barX_\pi]$,
where $\mu\in\ZZ_{\ge1}$ is the degree of a generic fiber of $p_1$, $d=\dim W + \dim \barX_\pi-\dim E_\pi$ and $[\barX_\pi]\in \text{Sym}(T'{}^*)$.

At a generic point of $\barX_\pi$, the fiber in $E_\pi$ is the same as the fiber in $E$;
since the equations of $E$ are linear in $Y$,
the fiber is a linear subspace, hence $\mu=1$.

As is clear from the characterization of $E_\pi$ in proposition~\ref{prop:irr},
$\barX_\pi=p_1(E_\pi)$ is the \textbf{matrix Schubert variety} $\overline{B_- \pi B_+}$
\cite{Ful-flags}.
(In particular, rank equations \textit{(iii)} in proposition~\ref{prop:eqs} are nothing but the defining equations for $X\in\barX_\pi$.)

By definition, the double Schubert polynomial $S_\pi\in\ZZ[x_1,\ldots,x_m,y_1,\ldots,y_n]$ is the equivariant cohomology class
of $\barX_\pi$, where the torus action is left/right multiplication by invertible diagonal matrices.
\footnote{It is customary to extend matrix Schubert varieties with
  $m\leq n$ to square matrices by introducing irrelevant entries
  $X_{ij}$ with $i=m+1,\ldots,n$; the resulting variety is nothing but
  $\barX_{\hat\pi}$, as can be checked by noting that no new rank
  conditions of the type of proposition~\ref{prop:eqs} \textit{(iii)}
  are introduced by the extension from $\pi$ to $\hat\pi$. That is the
  geometry behind our proof of proposition~\ref{prop:schub}, where we
  effectively showed equality of the double Schubert polynomials associated to
  $\pi$ and $\hat\pi$.}
Compared to our $T'$-action, this corresponds
to setting $A=0$, so that $S_{\pi} = [\barX_\pi]|_{A=0}$; but
actually, the action of $T'$ on $V$ factors through a codimension one
subtorus, allowing us to restore the $A$-dependence by say
$x_i\mapsto A+x_i$. The dimensions are also known: $\dim W=mn$,
$\dim \barX_\pi=mn-\ell(\hat\pi)$, and $\dim E_\pi = m(n+1)$, so
$d=m(n-1)-\ell(\hat\pi)$.

The final formula for the $B$-leading form of $[E_\pi]$ is $B^{m(n-1)-\ell(\hat\pi)} S_\pi(A+x_1,\ldots,A+x_m,y_1,\ldots,y_n)$.
Using corollary~\ref{cor:main} gives us the desired $B$-leading form of $G_\pi$.

For the second part of the theorem, consider the $B$-leading form of the weight of tiles:
\begin{itemize}
\item If $\beta_i=\tp$,
blank tiles contribute $B$, straight tiles 
contribute $A+x_{\varphi(i)}-y_j$;
\item If $\beta_i=\bt$,
blank tiles contribute $A+x_{\varphi(i)}-y_j$, straight tiles 
contribute $B$;
\item Elbow tiles contribute $B$.
\end{itemize}
This matches the weights described above in the definition of $S_\pi$ up to the shift $x_i\mapsto A+x_i$ and
a power of $B$; by homogeneity of $G_\pi$ and $S_\pi$,
we can determine the overall power of $B$ to be $\deg G_\pi - \deg S_\pi = mn-\ell(\hat\pi)$.
Therefore, the $B$-leading form of the sum over nongeneric pipe dreams only, reproduces $B^{mn-\ell(\hat\pi)} S_\pi$, i.e.,
the $B$-leading form of the full sum.

Note that the weights $A+x_i-y_j$ above generate a subrig of the rig of all
weights, so we can apply positivity from \S\ref{ssec:pos} to immediately conclude that
no other pipe dream can contribute to the $B$-leading form of the sum.
\end{proof}

\junk{
  One might expect a more direct proof that $[\barX_\pi]$ arises as the
  $B$-leading form of $[E_\pi]$; indeed, this follows from $\barX_\pi$ being the
  projection of $E_\pi$, via \cite[proposition 3]{artic39}.
  }

\section{The degeneration}\label{sec:degen}
The proof of part \textit{(1)} of theorem~\ref{thm:main} will be inductive, building the pipe dreams row by row from top to bottom,
so that the induction is on $m$.
We describe the corresponding geometry now, splitting into
cases depending on the value of $\beta_1\in \{\bt,\tp\}$.

\subsection{$\tp$-degenerating the equations of $E$}\label{ssec:tdegen}
If $\beta_1=\tp$, we will degenerate the first row (resp.\ column) of $X$ (resp.\ $Y$). 
We introduce the following weights for the co\"ordinates on $\Mat(m,n,\CC)\times \Mat(n,m,\CC)$:
\[
\wt(X_{ij})=\begin{cases}
j&i=1
\\
0&\text{else}
\end{cases}
\qquad
\wt(Y_{ji})=\begin{cases}
-j&i=1
\\
0&\text{else}
\end{cases}
\]
(These should not be confused with the $T$-weights $\wt_T$ defined in \eqref{eq:wt}; they correspond to a new circle action
under which $E$ is {\em not}\/ invariant.)

We shall now study the \defn{initial ideal} of the ideal of equations of $E$ w.r.t.\ the weights above, i.e., the ideal
generated by its initial terms (terms with highest weight). Because the associated monomial ordering is only a (total) preorder (i.e.,
there can be ties),
the resulting initial ideal will in general not be monomial.
(Some authors would therefore not call this a ``Gr\"obner degeneration'',
but it should at least be called a  ``partial Gr\"obner degeneration''.)

Also define for future use the matrices $X'$ (resp.\ $Y'$) obtained from $X$ (resp.\ $Y$) by deleting the first
row (resp.\ column), as well as $E'$ which is the lower-upper scheme for $m\mapsto m-1$.

We shall use below the notation $t_i:=(XY)_{ii}$;
recall its interpretation as ``flux of the pipe numbered $i$''.

\subsubsection{The na\"\i ve limit of the equations}
Consider the equations of $E$:
\[
(XY)_{ii'}=0\qquad i<i' \qquad (YX)_{jj'}=0\qquad j>j'
\]
The $XY$ equations are unchanged if $i>1$ and can be reformulated as: $X'Y'$ lower triangular.
\rem[gray]{I don't need $i=1$, because we allow ourselves lower-dimensional junk}

Now write the $YX$ equations explicitly:
\[
Y_{j1}X_{1j'}+\sum_{i=2}^m Y_{ji}X_{ij'}=0 
\]
The weight of $Y_{j1}X_{1j'}$ is $-j+j'<0$, so the first term drops out and we are left with the condition:
$Y'X'$ upper triangular.

In short, among the equations of the flat limit of $E$, one can find
the equations of $E'$.

\subsubsection{Overlap and fluxes}\label{ssec:defflux}
We need more equations obtained by overlap of initial terms
(i.e. we perform one step of the Buchberger algorithm).
Consider the equation $(XY)_> X - X(YX)_<=0$, where
$()_>$ (resp.\ $()_<$) denotes strict lower (resp.\ upper) triangle. The $(1,j)$ entry of that equation reads
\[
\sum_{i'>1,j'} X_{1j'}Y_{j'i'}X_{i'j}
-
\sum_{i',j'>j} X_{1j'}Y_{j'i'}X_{i'j}
=0
\]
Now degenerate this equation. Paying attention to the cancelation that
occurs between the two sums, we find that in the first sum only the
terms where $j'=1$ survives, whereas in the second, only the terms
where $i'=1$ survive; leading to
\begin{equation}\label{eq:degenflux}
X_{1j}\left(\sum_{i'>1}Y_{ji'}X_{i'j} - \sum_{j'>j} X_{1j'}Y_{j'1}\right)=0
\end{equation}

This equation has an important interpretation that we describe now.
Introduce \textbf{flux} variables on the edges of a row (i.e., a $1\times n$ square grid) of type $\tp$
as follows:
\begin{itemize}
\item On the top horizontal edge on column $j$, define the flux to be $(YX)_{jj}$.
\item On the bottom horizontal edge on column $j$, define the flux to be $(Y'X')_{jj}$.
\item On the vertical edges, numbered from $0$ to $n$, define the flux to be $\sum_{j'>j} X_{1j'}Y_{j'1}$.
\end{itemize}
(Note that this definition is consistent with that of \S\ref{sssec:flux}, if we view our row as the top row of a pipe dream.)
The leftmost vertical edge has flux $(XY)_{11}=t_1$, which we interpret as saying that the pipe numbered $1$ will enter
our row from the West end. 
Similarly, the rightmost vertical edge has flux $0$, which is interpreted as the lack of pipe exiting from the East end.

On the other hand, fluxes on vertical edges $0<j<n$, as well at the
bottom horizontal edges, are {\em a priori}\/ unconstrained: they
correspond to internal edges of a pipe dream, whose value will be
determined as we degenerate and pipe dreams start coalescing row by row.

As already observed in \S\ref{sssec:flux}, at each column $j$, the following ``flux conservation'' holds,
\begin{equation}\label{eq:conserv}
\begin{tikzpicture}[baseline=-3pt]
  {\setlength{\loopcellsize}{1.25cm}\plaq{}}
\node[shape=isosceles triangle,shape border rotate=90,inner sep=2pt,fill=blue,label={above:$\Phi_{\dN}$}] at (plaq.north) {};
\node[shape=isosceles triangle,shape border rotate=90,inner sep=2pt,fill=blue,label={below:$\Phi_{\dS}$}] at (plaq.south) {};
\node[shape=isosceles triangle,shape border rotate=0,inner sep=2pt,fill=blue,label={right:$\Phi_{\dE}$}] at (plaq.east) {};
\node[shape=isosceles triangle,shape border rotate=0,inner sep=2pt,fill=blue,label={left:$\Phi_{\dW}$}] at (plaq.west) {};
\end{tikzpicture}
\qquad
\Phi_\dW+\Phi_\dS=\Phi_\dE+\Phi_\dN
\end{equation}
where the arrows indicate in which direction the flux ``flows''.
This follows from $\Phi_\dN-\Phi_\dS=\Phi_\dW-\Phi_\dE=X_{1j}Y_{j1}$. 
A special case, which will appear repeatedly below, is the implication
(actually an equivalence)
\begin{equation}\label{eq:conservspec}
  X_{1j}=0\text{ or }Y_{j1}=0\quad\Longrightarrow\quad
  \Phi_\dW=\Phi_\dE\text{ and }\Phi_\dN=\Phi_\dS
\end{equation}
Using this language, \eqref{eq:degenflux} becomes
\begin{equation}\label{eq:degenflux2}
  X_{1j}(\Phi_\dS-\Phi_\dE)=0
\end{equation}
where it is understood that the fluxes refer to the square on column $j$.
There is no corresponding equation involving $Y_{j1}$, which will be the
source of much difficulty in lemma \ref{lem:Deqs} to come.

We now consider a particular component $E_\pi$ and degenerate it.

\subsection{$\tp$-degenerating the equations of $E_\pi$}
\subsubsection{The flux connectivity equations}
Consider the equations of proposition~\ref{prop:eqs} \textit{(ii)}, which were written more explicitly in \eqref{eq:flux}.
None of these are affected by the degeneration. In the first case,
if $i>1$, we can also write
\begin{equation}\label{eq:flux'}
  (YX)_{jj} = (X'Y')_{i-1\,i-1}\qquad j=\pi(i),\ i>1
\end{equation}
We recognize in the l.h.s.\ the flux at the top of column $j$.
If $i=1$, the same equation says that the leftmost vertical flux is equal to the flux at the top of column $j=\pi(1)$.

\subsubsection{A rank condition on $X$}
Next, among the equations of $E_\pi$, there are rank conditions on submatrices of $X$, cf proposition~\ref{prop:eqs} \textit{(iii)}.
An easy one comes from the $1\times (\pi(1)-1)$ upper-left submatrix of $X$, whose rank is zero.
These equations are unchanged by the degeneration, so we have
\begin{equation}\label{eq:Xzero}
  X_{1j}=0\qquad j=1,\ldots,\pi(1)-1
\end{equation}

\subsubsection{Some rank conditions on $Y$}
Similarly, among the equations of $E_\pi$, there are rank conditions
on submatrices of $Y$, cf proposition~\ref{prop:eqs} \textit{(iv)}.
We are interested in the cases $i=1$ and $j$ not in the image of $\pi$
(i.e., there will be no pipe coming out on the North side in column
$j$). Let $k=\#\{h \mid \pi(h)>j\}$. Then the rank of the submatrix of
$Y$ obtained by keeping only rows $\ge j$ must be less or equal to
$k$. Consider a $(k+1)\times(k+1)$ minor $\det Y_{JI}$ of $Y$ containing
the first column and whose topmost row is $j$,
so $I=\{1,i_1,\ldots,i_k\}$ and $J=\{j,j_1,\ldots,j_k\}$.
From the weight $[Y_{ij}]$ above, its degeneration is simply
\begin{equation}\label{eq:degenrank}
  Y_{j1} \det Y'_{J'I'} = 0
\end{equation}
with obvious notations $I'=\{i_1-1,\ldots,i_k-1\}$ and $J'=\{j_1,\ldots,j_k\}$.

\newcommand\pir{{\pi\backslash r}}
\subsection{Building the type $\tp$ row of tiles}\label{ssec:building}
We're now ready to degenerate $E_\pi$. Let us call its flat limit $D_\pi$.
Because $E_\pi$ is irreducible, all the geometric components of $D_\pi$
are of dimension $\dim E_\pi = m(n+1)$. We will first show that
\begin{equation}\label{eq:degen}
  D_\pi = \bigcup_{r} D_{\pi}^r \qquad \text{as a set; maybe not as a scheme}
\end{equation}
where $r$ runs overs some set of rows of tiles of type $\tp$ to be
described below, and $D_\pi^r$ is a variety whose defining equations
will be determined explicitly in terms of $r$.
Later we will show that $D_\pi$ is generically reduced.

\subsubsection{Square by square procedure}\label{ssec:square}
We build the row square by square. At first this is what the row might
look like, if say $\pi=352$:
\[
\begin{tikzpicture}[scale=.75,every node/.style={circle,inner sep=1pt,fill=\linkpatternedgecolor}]
\fill[\linkpatternboxcolor] (0,0) rectangle (5,1);
\draw (0,0) -- (0,1) -- (5,1) -- (5,0);
\draw[dotted] (0,0) -- (5,0);
\foreach\x in {1,...,4} \draw[dotted] (\x,0) -- (\x,1);
\node[label=left:1] at (0,.5) {};
\node[label=above:3] at (1.5,1) {};
\node[label=above:1] at (2.5,1) {};
\node[label=above:2] at (4.5,1) {};
\end{tikzpicture}
\]
This picture should be interpreted as follows: the edges with solid lines
are those for which we know the value of the flux going through that edge,
with the convention that an empty edge (but not a dotted edge)
corresponds to a zero flux, and a edge with a pipe numbered $i$ going
through corresponds to the flux $t_i=(XY)_{ii}$.
Based on \eqref{eq:flux} and the definition of fluxes in \S\ref{ssec:defflux},
we know that the locations where pipes leave on the North side match the data of $\pi$
and the pipe numbered $1$ enters from the West (and nothing leaves from the East).
As we discover new equations satisfied by the various components of the degeneration of $D_\pi$, we add more tiles to the row.

\rem[gray]{unknown tiles shouldn't be confused with blank tiles, usual problem... dotted edges are there for this purpose}

First we take care of the columns $j<\pi(1)$. From \eqref{eq:Xzero}, combined with \eqref{eq:conservspec},
we conclude that fluxes are carried straight across every square in columns $j<\pi(1)$.
We therefore draw the appropriate ``straight'' tiles; e.g., on our running example,
\[
\begin{tikzpicture}[scale=.75,every node/.style={circle,inner sep=1pt,fill=\linkpatternedgecolor}]
\fill[\linkpatternboxcolor] (0,0) rectangle (5,1);
\draw (0,0) -- (0,1) -- (5,1) -- (5,0);
\draw (0,0) -- (2,0) -- (2,1);
\draw (1,0) -- (1,1);
\draw[dotted] (2,0) -- (5,0);
\foreach\x in {3,4} \draw[dotted] (\x,0) -- (\x,1);
\draw[/linkpattern/edge] (0,0.5) -- (2,0.5)
(1.5,1) -- (1.5,0);
\node[label=left:1] at (0,.5) {};
\node[label=above:3] at (1.5,1) {};
\node[label=below:3] at (1.5,0) {};
\node[label=above:1] at (2.5,1) {};
\node[label=above:2] at (4.5,1) {};
\end{tikzpicture}
\]

Define $\tilde D_\pi^{\begin{tikzpicture}[scale=.25]
\fill[\linkpatternboxcolor] (0,0) rectangle (5,1);\useasboundingbox;
\draw (0,0) -- (0,1) -- (5,1) -- (5,0);
\draw (0,0) -- (2,0) -- (2,1);
\draw (1,0) -- (1,1);
\draw[dotted] (2,0) -- (5,0);
\foreach\x in {3,4} \draw[dotted] (\x,0) -- (\x,1);
\draw[/linkpattern/edge] (0,0.5) -- (2,0.5)
(1.5,1) -- (1.5,0);
\node at (0,.5) {};
\node at (1.5,1) {};
\node at (1.5,0) {};
\node at (2.5,1) {};
\node at (4.5,1) {};
\end{tikzpicture}
} := D_\pi$, the base case of an inductively defined family
of subschemes $\tilde D^r_\pi$.
We then fill in the remaining tiles starting from the {\em East.}
Below we will define $\tilde D_\pi^r$ as the corresponding piece of $D_\pi$,
where $r$ is the partially filled row; for example, after one step,
the example above might look like
\[
\begin{tikzpicture}[scale=.75,every node/.style={circle,inner sep=1pt,fill=\linkpatternedgecolor}]
\fill[\linkpatternboxcolor] (0,0) rectangle (5,1);
\draw (0,0) -- (0,1) -- (5,1) -- (5,0) -- (4,0);
\draw (0,0) -- (2,0) -- (2,1);
\draw (1,0) -- (1,1);
\draw[dotted] (2,0) -- (5,0);
\draw[dotted] (3,0) -- (3,1);
\draw (4,0) -- (4,1);
\draw[/linkpattern/edge] (0,0.5) node[label=left:1] {} -- (2,0.5)
(4.5,1) node[label=above:2] {} .. controls (4.5,.7) and (4.3,.5) .. (4,.5) node[label=left:2] {};
(1.5,1) node[label=above:3] {} -- (1.5,0) node[label=below:3] {};
\node[label=above:1] at (2.5,1) {};
\end{tikzpicture}
\]

Note that at each step of the process, the state of the North and East edges of the rightmost unknown square (which is located in column $j>\pi(1)$) is known (i.e., the value of the flux through those edges is known to be either $0$ for a blank edge or $t_i$ if pipe $i$ goes through).
\footnote{\label{ft:nonred} A small caveat is that the values of the
  fluxes are only known set-theoretically, not scheme-theoretically;
  see the discussion of blank tiles. This issue is not relevant to the
  construction of $\tilde D_\pi^r$ and will eventually be addressed
  when the row is completed.}  There are two cases:
\begin{itemize}
\item Either at least one of these two edges (the North and East of
  the rightmost unknown square) has nonzero flux, i.e. a pipe is going through.
  Apply \eqref{eq:degenflux} (or equivalently
  \eqref{eq:degenflux2}): it shows that $\tilde D_\pi^r$ can be
  written as
\begin{equation}\label{eq:division}
  \tilde D_\pi^r= (\tilde D_\pi^r \cap \{X_{1j}=0\}) \cup (\tilde D_\pi^r \cap \{\Phi_\dS=\Phi_\dE\})
\end{equation}

Call the first piece $\tilde D_{\pi}^{r_1}$ where $r_1$ is $r$ on which the square in column $j$ has been filled with the appropriate straight tile \plaqctr{c}, \plaqctr{h} or \plaqctr{v}, as follows once again from\eqref{eq:conservspec}.

Call the second piece $\tilde D_{\pi}^{r_2}$ where $r_2$ is $r$ on which the square in column $j$ has been filled with the appropriate elbow tile \plaqctr{a}, \plaqctr{j} or \plaqctr{r}, since $\Phi_\dS=\Phi_\dE$ (and $\Phi_\dN=\Phi_\dW$).

\item Both edges are empty.  We note that \eqref{eq:degenflux} can be
  rewritten, using $\Phi_{\dN}=\Phi_{\dE}=0$, as $X_{1j}^2 Y_{j1}=0$.  We
  would like to show that $Y_{j1}=0$, but we'll postpone this till the
  row is completed.  For now, and ignoring nonreducedness issues (see
  footnote \ref{ft:nonred}), $X_{1j}Y_{j1}=0$ is enough to conclude
  that fluxes $\Phi_{\dS}$ and $\Phi_{\dW}$ are also zero, cf
  \eqref{eq:conservspec}; so we introduce $r'$ which is $r$ with a
  blank tile \plaqctr{} added on column $j$, and define
  $\tilde D_{\pi}^{r'} := \tilde D_\pi^r$.
\end{itemize}

In both cases, subtract one from $j$ and proceed with each piece as long as $j>\pi(1)$.

Finally, when we reach column $j=\pi(1)$,
we insert in $r$ an elbow tile as appropriate (thus satisfying \eqref{eq:conserv}), without changing $\tilde D_\pi^r$.

\subsubsection{Description of the corresponding schemes}
At the end of this process we have a union $D_\pi = \bigcup_r \tilde D_\pi^r $ where $r$ runs over some set
of completed rows of tiles, e.g., on the running example, $r$ might be
\[
\begin{tikzpicture}[scale=.75,every node/.style={circle,inner sep=1pt,fill=\linkpatternedgecolor}]
\draw[fill=\linkpatternboxcolor] (0,0) rectangle (5,1);
\foreach\x in {1,...,4} \draw (\x,0) -- (\x,1);
\draw[/linkpattern/edge] (0,0.5) -- (2,0.5)
 .. controls (2.3,0.5) and (2.5,0.7) .. (2.5,1)
(4.5,1) .. controls (4.5,.7) and (4.3,.5) .. (4,.5) .. controls (3.7,.5) and (3.5,.3) .. (3.5,0) node[label=below:2] {}
(1.5,1) -- (1.5,0);
\node[label=left:1] at (0,.5) {};
\node[label=above:3] at (1.5,1) {};
\node[label=below:3] at (1.5,0) {};
\node[label=above:1] at (2.5,1) {};
\node[label=above:2] at (4.5,1) {};
\end{tikzpicture}
\]

Given $\pi\in \SS_{m,n}$ and $r$ a row of tiles of type $\tp$ as above, we say that $r$ is \textbf{compatible} with $\pi$
if the pipes emerging at the top of $r$ are in columns $\pi(1),\ldots,\pi(m)$. By definition,
any $r$ that appears in the process described in \S\ref{ssec:square} is compatible with $\pi$;
it would not be hard to prove at this stage that conversely,
any $r$ that is compatible with $\pi$ appears this way, but instead we shall postpone this proof to \S\ref{ssec:degenmdeg}.

Given $\pi\in \SS_{m,n}$ and $r$ a compatible row, define
$\pir\in\SS_{m-1,n}$ to be the connectivity ``after removing the row $r$ from $\pi$''. More explicitly,
we label the pipes at the top of $r$ according to $\pi$; then follow these pipes across $r$,
and define $(\pir)(i)$ to be the column where pipe numbered
$i+1$ exits at the bottom of $r$
(the shift by $1$ is due to pipe $1$ having exited to the West).
On the example above, $\pir=42$.

Denote by $p$ the projection $(X,Y)\mapsto (X',Y')$. Recall that $E'$ is the lower-upper scheme in size $m-1$;
we introduce similar notations $E'_{\pi}$, $\tilde E'_{\pi}$ with $\pi\in \SS_{m-1,n}$.
\begin{lem}\label{lem:tech}\ %
  \begin{itemize}
    \item
  $p(\tilde D_\pi^r)$ is set-theoretically contained in $\tilde E'_{\pir}$.
\item The fibers of $\tilde D_\pi^r\overset{p}\to \tilde E'_{\pir}$ are of dimension at most $n+1$.
\end{itemize}
\end{lem}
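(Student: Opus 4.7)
The plan is to prove the two assertions separately, both via direct equation-by-equation analysis.

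For the set-theoretic containment $p(\tilde D_\pi^r)\subseteq \tilde E'_{\pir}$, I would verify each defining equation of $\tilde E'_{\pir}$ on the image $(X',Y')=p(X,Y)$ for $(X,Y)\in\tilde D_\pi^r$. The triangularity of $X'Y'$ and $Y'X'$ follows immediately from the na\"ive-limit computation of \S\ref{ssec:tdegen}: both relations are already initial terms of the defining equations of $E$ and hence hold on all of $D_\pi$. The substantive content is the flux equation of $\pir$, for which I would use
\[
(Y'X')_{jj} = (YX)_{jj} - Y_{j,1}X_{1,j}
\]
together with the flux equation of $\pi$, which is weight-$0$ with respect to the degeneration weights of \S\ref{ssec:tdegen} and is therefore preserved intact in $D_\pi$. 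A case analysis on the tile occupying column $j$ of $r$ then concludes: for a straight tile, $X_{1,j}=0$ gives $(Y'X')_{jj}=(YX)_{jj}$; for an elbow tile, $\Phi_\dS=\Phi_\dE$ combined with flux conservation \eqref{eq:conserv} pins down $(Y'X')_{jj}$; for a blank tile, $(YX)_{jj}=0$ and $X_{1,j}Y_{j,1}=0$ force $(Y'X')_{jj}=0$. Matching pipes between $\pi$ (at the top of $r$) and $\pir$ (at the bottom) identifies the flux values correctly.

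For the fiber-dimension bound, the plan is to exhibit $n-1$ explicit equations cutting the fiber. Fix $(X',Y')\in\tilde E'_{\pir}$, let $a:=\pi(1)$, and use coordinates $u_j:=X_{1,j}$, $v_j:=Y_{j,1}$ on the ambient $\mathbb{C}^{2n}$. Combining the flux equation of $\pi$ at column $j$ with that of $\pir$ at column $j$ (satisfied by the base, by the first part) via the identity above yields
\[
u_j v_j = (YX)_{jj}-(Y'X')_{jj} =: c_j(X',Y')
\]
for each $j\neq a$, where $c_j$ is a constant determined by the base (explicitly $c_j=(X'Y')_{i-1,i-1}-(Y'X')_{jj}$ when $j=\pi(i)$ with $i>1$, and $c_j=-(Y'X')_{jj}$ when $j\notin\mathrm{image}(\pi)$). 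These $n-1$ equations involve pairwise disjoint variable pairs, and each defines a $1$-dimensional subvariety of the corresponding $\mathbb{C}^2$ (a hyperbola when $c_j\neq 0$, the union of two coordinate axes when $c_j=0$). The remaining pair $(u_a,v_a)$ is unconstrained, and the analogous equation at $j=a$ reduces to $\sum_{k\neq a}u_k v_k=(Y'X')_{aa}$, which is the sum of the $j\neq a$ equations (using $\mathrm{tr}(X'Y')=\mathrm{tr}(Y'X')$) and hence redundant. Therefore the fiber has dimension at most $(n-1)\cdot 1 + 2 = n+1$.

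I expect the main obstacle to be the tile-by-tile case analysis in the first part, particularly for the double-pipe tiles $\plaqctr{r}$ and $\plaqctr{v}$, where conservation must be tracked for two pipes with (possibly) distinct fluxes simultaneously; this is essentially bookkeeping but requires care. The second part is comparatively clean once one observes that the flux equations of $\pi$ survive the degeneration as equations and translate, via the explicit substitution above, directly into fiber equations of the desired form.
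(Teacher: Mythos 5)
Your proof is correct and takes essentially the same route as the paper's: the containment is exactly the flux bookkeeping already built into the square-by-square construction of $\tilde D_\pi^r$ (the paper simply cites that construction and the definition of $\pi\backslash r$ rather than re-running the tile-by-tile case analysis), and the fiber bound is the paper's own argument that the $n-1$ equations $X_{1j}Y_{j1}=c_j(X',Y')$ for $j\ne\pi(1)$ confine the fiber to a product of $n-1$ hyperbolas or line-pairs with a copy of $\CC^2$. Your extra observation that the equation at $j=\pi(1)$ is redundant (via equality of traces) is correct but not needed for the bound.
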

\begin{proof}
  If the pipe numbered $i+1$ leaves $r$ on column $j$, this means the corresponding flux
$(Y'X')_{jj}$ is equal to $(XY)_{i+1\,i+1}=(X'Y')_{ii}$.
  The way we have defined $\pir$ is precisely so that by definition of $\tilde E'_{\pir}$
  (cf lemma~\ref{lem:flux}), one has
  $p(\tilde D_\pi^r)\subseteq \tilde E'_{\pir}$.

  For the second part, we first determine some equations satisfied by the fiber $p^{-1}(X',Y')$.
  Note that for each $j\ne \pi(1)$, by using \eqref{eq:flux}--\eqref{eq:flux'}
  we have an equation of the form 
  \begin{equation}\label{eq:fluxeq}
    X_{1j}Y_{j1}=\Phi_\dN-\Phi_\dS=
    \begin{cases}
      (X'Y')_{i-1\,i-1}-(Y'X')_{jj}&\text{if }\pi(i)=j\text{ for some }i
      \\
      -(Y'X')_{jj}&\text{else}
    \end{cases}    
  \end{equation}
  so the fiber sits inside a product of $n-1$ hyperbolas (or unions of
  two lines) and $\CC^2$. Therefore its dimension is at most $n+1$.
\end{proof}

At this stage, it is convenient to get rid of potential unwanted lower-dimensional pieces by defining
\[
  D_\pi^r := \overline{\tilde D_\pi^r \cap p^{-1}(E_\pir^{\prime\circ})}
\]
and giving it the reduced scheme structure.
Recall that $\tilde E'_{\pir}=E^{\prime\circ}_{\pir}\cup \text{lower dimensional}$
(cf lemmas~\ref{lem:dense} and \ref{lem:flux}),
and so by lemma~\ref{lem:tech},
$\tilde D_\pi^r \backslash D_\pi^r$ is of dimension $<\dim E'_{\pir}+\dim(\text{fiber})\le m(n+1)=\dim D_\pi$, so cannot contribute to $D_\pi$,
whose geometric components are all of the same dimension.
Hence $D_\pi$ is set-theoretically equal to the union  $\Union_r D^r_{\pi}$, as advertised in \eqref{eq:degen}.

We know quite a few equations of $D_\pi^r$ (and we will eventually
prove that these are {\em all}\/ the equations defining $D^r_\pi$):
\begin{lem}\label{lem:Deqs}
  The following equations hold (scheme-theoretically) on $\tilde D_\pi^r \cap p^{-1}(E_\pir^{\prime\circ})$,
  and therefore on $D_\pi^r\ni (X,Y)$:
  \begin{itemize}
  \item $(X',Y')\in E'_{\pir}$
  \item $X_{1j}Y_{j1}=\begin{cases}
    (X'Y')_{i-1\,i-1}-(Y'X')_{jj}
    \\[3mm]
    (X'Y')_{i-1\,i-1}
    \\[3mm]
    -(Y'X')_{jj}
  \end{cases}$ if there is an elbow tile in column $j$ of $r$
  $\begin{cases}
    \plaqctr{a}\qquad \pi(i)=j,\ i>1
    \\[2mm]
    \plaqctr{j}\qquad \pi(i)=j,\ i>1
    \\[2mm]
    \plaqctr{r}
  \end{cases}$
  \item $X_{1j}=0$ if there is a straight tile in column $j$ of $r$.
  \item $Y_{j1}=0$ if there is a blank tile in column $j$ of $r$.
  \end{itemize}
\end{lem}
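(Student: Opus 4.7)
The plan is to verify the four claims in turn, exploiting the inductive construction of $\tilde D_\pi^r$ from \S\ref{ssec:square} together with the additional structure supplied by the restriction to $p^{-1}(E_\pir^{\prime\circ})$. Three of the four items are quite direct; the real work lies in the blank tile statement.

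Claim (1) is a tautology: $D_\pi^r = \overline{\tilde D_\pi^r \cap p^{-1}(E_\pir^{\prime\circ})}$, so $p(D_\pi^r) \subseteq E'_{\pir}$ scheme-theoretically. Claim (3) follows immediately from the construction: whenever the step at column $j$ required a split via \eqref{eq:division}, the straight branch was defined as $\tilde D_\pi^r \cap \{X_{1j}=0\}$, so $X_{1j}=0$ sits in the defining ideal by fiat (and for $j<\pi(1)$ the equation already comes from \eqref{eq:Xzero}). For the elbow equations in claim (2), the elbow branch was cut out by $\Phi_\dS = \Phi_\dE$, equivalently $\Phi_\dN = \Phi_\dW$ by flux conservation; combined with the tautological identity $X_{1j}Y_{j1} = \Phi_\dN - \Phi_\dS = (YX)_{jj} - (Y'X')_{jj}$, it remains only to evaluate the two terms. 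Equation~\eqref{eq:flux} together with the elementary equality $(XY)_{ii}=(X'Y')_{i-1,i-1}$ for $i>1$ gives $(YX)_{jj} = (X'Y')_{i-1,i-1}$ when $\pi(i)=j$ with $i>1$, and $(YX)_{jj}=0$ otherwise; on $p^{-1}(E_\pir^{\prime\circ})$ one has similarly $(Y'X')_{jj}=0$ when $j \notin \mathrm{image}(\pir)$. Matching these alternatives against the three shapes \plaqctr{a}, \plaqctr{j}, \plaqctr{r} produces the three formulae in the statement.

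The main obstacle is claim (4). The construction of \S\ref{ssec:square} only yielded $X_{1j}^2 Y_{j1}=0$ from \eqref{eq:degenflux}, and even after passing to $p^{-1}(E_\pir^{\prime\circ})$ (where $(Y'X')_{jj}=0$ since the blank tile condition forces $j \notin \mathrm{image}(\pi)$ and hence $j \notin \mathrm{image}(\pir)$) this can only be upgraded to $X_{1j}Y_{j1}=0$, still strictly weaker than the desired $Y_{j1}=0$. My resolution is to bring in the degenerated rank equation \eqref{eq:degenrank}, which gives $Y_{j1}\det Y'_{J'I'}=0$ on all of $D_\pi$ for every admissible pair of index sets. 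Taking $J'$ to be the set of columns of $[j,n]$ lying in $\mathrm{image}(\pir)$ and $I'$ to be their $\pir$-preimages (both of size $k$, matching the rank bound of proposition~\ref{prop:eqs}(iv)), at the base point $(s\pir,\pir^T t)$ of $E_\pir^{\prime\circ}$ the submatrix $Y'_{J'I'}$ is a permutation matrix times a diagonal of invertible entries, so $\det Y'_{J'I'}=\pm\prod t_i \ne 0$. Hence $\det Y'_{J'I'}\circ p$ is not identically zero on the dense open $\tilde D_\pi^r \cap p^{-1}(E_\pir^{\prime\circ}) \subseteq D_\pi^r$, so $Y_{j1}$ vanishes on a dense open of $D_\pi^r$; the reducedness built into the definition of $D_\pi^r$ then propagates this vanishing to the entire scheme.
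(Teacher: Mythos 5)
Your treatment of the first three bullets and your key idea for the fourth (invoke the degenerated rank equation \eqref{eq:degenrank} with the minor indexed by the pipes exiting strictly right of $j$, then evaluate $\det Y'_{J'I'}$ at the base point $(s\pir,\pir^Ts)$ of $E_\pir^{\prime\circ}$) coincide with the paper's proof; the choice of index sets and the nonvanishing computation are exactly the ones used there.

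The one place where your argument falls short of the statement is the final deduction in the blank-tile case. The lemma asserts that $Y_{j1}=0$ holds \emph{scheme-theoretically on} $\tilde D_\pi^r \cap p^{-1}(E_\pir^{\prime\circ})$ (this stronger form is what feeds into lemma~\ref{lem:Dred}, where generic reducedness of $D_\pi$ is tested on that locus via containment in the reduced scheme $\hat D_\pi^r$). You only conclude vanishing on the reduced closure $D_\pi^r$, and even for that you use ``$\det Y'_{J'I'}\circ p$ is not identically zero,'' which by itself only gives $Y_{j1}=0$ on the open locus $\{\det Y'_{J'I'}\ne 0\}$; since irreducibility of $D_\pi^r$ is only established later (proposition~\ref{prop:degen}, which uses this lemma), you cannot yet assert that this open set is dense in $D_\pi^r$. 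Both issues are repaired by the same observation: $\det Y'_{J'I'}$ is \emph{nowhere} zero on $E_\pir^{\prime\circ}$, not merely generically nonzero. Indeed $E_\pir^{\prime\circ}$ is a single $(N_-\times N_+)$-sweep of the points $(s\pir,\pir^Tt)$, and a Cauchy--Binet expansion of $\det\bigl((c\,\pir^T t\, b^{-1})_{J'I'}\bigr)$ for $c$ upper- and $b$ lower-unitriangular collapses to the single term $\pm\prod t_i\ne 0$, because $J'$ already consists of \emph{all} columns of $\mathrm{image}(\pir)$ exceeding $j$. Hence $\det Y'_{J'I'}\circ p$ is a unit on $p^{-1}(E_\pir^{\prime\circ})$, and $Y_{j1}\det Y'_{J'I'}=0$ upgrades directly to the scheme-theoretic identity $Y_{j1}=0$ on $\tilde D_\pi^r\cap p^{-1}(E_\pir^{\prime\circ})$, with no appeal to density or reducedness.
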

\begin{proof}
  The first equation is just a reformulation of $p(X,Y)\in E^{\prime\circ}_{\pir}\subseteq E'_{\pir}$,
  and the next two are equations already satisfied on $\tilde D_\pi^r$, cf \eqref{eq:division} and \eqref{eq:fluxeq}.
  Only the last one is new.
  
  Pick $j$ such that $r$ has a blank tile in column $j$. We want to apply \eqref{eq:degenrank}.
  Because of this blank tile, if $i$ is such that $\pi(i)>j$ then we also know
  $(\pir)(i-1)>j$ (``any pipe that exits right of $j$ in $\pi$
  must also exit right of $j$ in $\pir$''). Let $k$ be the number of such indices $i$,
  and denote those indices $i_1<\cdots<i_k$.
  Also write $(\pir)(i_a-1)=j_a$ for $a=1,\ldots,k$.

  Now compute $\det Y'_{J'I'}$ as in \eqref{eq:degenrank} at the point $(X',Y')=(t (\pir),(\pir)^T s)$
  with $s$ and $t$ invertible diagonal matrices.
  $Y'_{J'I'}$ has exactly one nonzero entry per row and column, so its determinant is nonzero.
  We conclude that $Y_{j1}$ vanishes on $\tilde D_\pi^r \cap p^{-1}(E'{}^\circ_{\pir})$.
\rem[gray]{there might be a simpler proof, since we know $X_{1j}^2Y_{j1}=0$ -- but that's a bit tricky, because
we only derived this under the assumption that $\Phi_{\dN}=\Phi_{\dE}=0$, which itself was only known set-theoretically
-- so one would have to go through blanks one by one, and recompute the fluxes scheme-theoretically. I mean
for $D_\pi$ there are no such issues but we really want on $\tilde D_\pi \cap p^{-1}$ for the next corollary}
\end{proof}

Let $\hat D_\pi^r$ be the affine scheme cut out by the equations of lemma~\ref{lem:Deqs}.

\begin{lem}\label{lem:Dred}
  \begin{enumerate}
  \item $\hat D_\pi^r$ is reduced, and equidimensional of dimension $\dim D_\pi$.
  \item $D_\pi$ is generically reduced.
  \end{enumerate}
\end{lem}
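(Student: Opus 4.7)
The plan is to handle part~(1) by analyzing $\hat D_\pi^r$ as a family over $E'_{\pir}$, and to deduce part~(2) by localizing at generic points of components of $D_\pi$.

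For part~(1), I view $\hat D_\pi^r$ as a closed subscheme of $E'_{\pir}\times\AA^{2n}$, where the $\AA^{2n}$ parametrizes the first-row/first-column coordinates $(X_{1,\bullet},Y_{\bullet,1})$. Beyond the equations cutting out $E'_{\pir}$, the equations of lemma~\ref{lem:Deqs} are indexed by the $n$ columns of $r$: each involves only the pair $(X_{1j},Y_{j1})$ of its column, being either linear ($X_{1j}=0$ or $Y_{j1}=0$ from straight or blank tiles) or quadratic ($X_{1j}Y_{j1}=c_j(X',Y')$ from elbow tiles). The flux equation $(XY)_{11}=(YX)_{\pi(1)\pi(1)}$ belonging to $I(E)$ has weight zero under the degeneration, so its initial form persists in $I(D_\pi)$; a direct check shows this relation makes one of the $n$ column equations redundant (say, the one at column $\pi(1)$), so the effective codimension contributed by the row equations is $n-1$. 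Krull's Hauptidealsatz then gives every component of $\hat D_\pi^r$ dimension at least $(m-1)(n+1)+2n-(n-1)=m(n+1)$; and $\hat D_\pi^r\subseteq E$, together with the equidimensionality of $E$ (theorem~\ref{thm:Eequidim}), supplies the reverse inequality, establishing equidimensionality in the claimed dimension.

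Reducedness of $\hat D_\pi^r$ I would prove by adding the $n-1$ independent row equations inductively to the reduced scheme $E'_{\pir}\times\AA^{2n}$, using that the column-variable pairs are pairwise disjoint. Each linear equation trivially preserves reducedness. For a quadratic $X_{1j}Y_{j1}-c_j$ added over a reduced ring $R$ in which $X_{1j},Y_{j1}$ are indeterminates: if $c_j|_R\ne 0$, the polynomial is irreducible in $\operatorname{Frac}(R)[X_{1j},Y_{j1}]$ (degree $2$ with nonzero constant term, hence no linear factor over $R$) and generates a prime, radical ideal; if $c_j|_R=0$, the polynomial factors as $X_{1j}\cdot Y_{j1}$ whose zero locus is the union of two coprime hyperplanes, again radical. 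Either way the quotient remains reduced, and the induction closes.

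For part~(2), fix a generic point $\mathfrak{p}$ of a component of $D_\pi=\bigcup_r D_\pi^r$ and choose $r$ so that $\mathfrak{p}$ is a generic point of $D_\pi^r\subseteq\hat D_\pi^r$; by part~(1), $I(\hat D_\pi^r)_\mathfrak{p}$ equals the maximal ideal $\mathfrak{p} R_\mathfrak{p}$. The scheme-theoretic validity of lemma~\ref{lem:Deqs}'s equations on $\tilde D_\pi^r\cap p^{-1}(E^{\prime\circ}_{\pir})$ localizes at $\mathfrak{p}$ to $I(\tilde D_\pi^r)_\mathfrak{p}=I(\hat D_\pi^r)_\mathfrak{p}$. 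Each square-by-square splitting~\eqref{eq:division} arises from the initial relation $X_{1j}(\Phi_\dS-\Phi_\dE)\in I(D_\pi)$; at $\mathfrak{p}$, one of these factors is a unit (the branch not selected by $r$), forcing the other into $I(D_\pi)_\mathfrak{p}$. Iterating through the construction, $I(\tilde D_\pi^r)_\mathfrak{p}\subseteq I(D_\pi)_\mathfrak{p}$, and hence $I(D_\pi)_\mathfrak{p}=\mathfrak{p} R_\mathfrak{p}$, giving $D_\pi$ reduced at $\mathfrak{p}$. The main obstacle is the global reducedness of $\hat D_\pi^r$ in part~(1): the base $E'_{\pir}$ is a priori not Cohen--Macaulay, so one cannot simply invoke a complete-intersection-in-CM argument. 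The inductive disjoint-columns strategy above sidesteps this by reducing the question to a chain of hypersurface additions over polynomial-ring extensions of reduced rings, where squarefreeness of each defining polynomial (irreducible or a product of two coprime linear factors) suffices; isolating the one redundant column equation is the key combinatorial step that makes Krull's dimension bound tight.
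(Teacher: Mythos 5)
Your overall strategy for part~(1) --- building $\hat D_\pi^r$ from $E'_{\pir}\times\AA^{2n}$ by imposing the $n-1$ column equations one at a time --- is the same as the paper's, and your part~(2) is a (somewhat more explicit) version of the paper's localization at generic points. But there is a concrete error in your dimension bound: the inclusion $\hat D_\pi^r\subseteq E$ is false. The degeneration destroys some of the defining equations of $E$: for instance the conditions $(XY)_{1i'}=0$ for $i'>1$ have initial forms that are discarded (only the equations ``$X'Y'$ lower triangular, $Y'X'$ upper triangular'' survive), so a point of $\hat D_\pi^r$ need not lie in $E$, and theorem~\ref{thm:Eequidim} cannot supply the upper bound $\dim\le m(n+1)$. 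The correct upper bound is available, but from a different source: either from the fiber estimate of lemma~\ref{lem:tech} (the column equations confine each fiber over $E'_{\pir}$ to a product of $n-1$ hyperbolas/line-pairs and $\CC^2$, hence dimension $\le n+1$), or, as in the paper, by getting equidimensionality for free from the inductive lemma that $\Spec(A[u,v]/(uv-a))$ is equidimensional of dimension $\dim A+1$ whenever $\Spec A$ is reduced and equidimensional.

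Your reducedness induction also has a soft spot. After the first hyperbola equation the intermediate ring need no longer be a domain, so ``irreducible in $\operatorname{Frac}(R)[X_{1j},Y_{j1}]$'' is not directly available, and your dichotomy $c_j|_R=0$ versus $c_j|_R\ne 0$ omits the case where $c_j$ is a nonzero zerodivisor (vanishing on some components of $\Spec R$ but not others), where $V(uv-c_j)$ is neither irreducible nor a union of two coordinate hyperplanes. Even over a domain, irreducibility of $uv-a$ over $\operatorname{Frac}(R)$ shows the extended ideal is prime but not by itself that $(uv-a)R[u,v]$ is radical; one must also note that $R[u,v]/(uv-a)$ is $R$-free, hence embeds in $\operatorname{Frac}(R)[u,v]/(uv-a)$. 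The paper's one-line lemma --- filter $A[u,v]/(uv-a)$ by degree in $u,v$ and pass to the associated graded $A[u,v]/(uv)$, which is reduced whenever $A$ is --- handles all of these cases uniformly and is the tool you should substitute. Finally, in part~(2) your claim that at the generic point $\mathfrak p$ ``one of the two factors is a unit'' needs a word of justification (why can a top-dimensional component not lie in both branches of a split?); the paper's route is to observe that over $p^{-1}(E_{\pir}^{\prime\circ})$ the row $r$ is uniquely determined, so that $D_\pi$ and $\tilde D_\pi^r$ agree scheme-theoretically there.
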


\begin{proof}
  First, we prove that $\hat D_\pi$ is reduced, and of the expected
  dimension $\dim E'_\pir + (2n)-(n-1)=m(n+1)=\dim D_\pi$ (starting
  from $E'_\pir$, add $2n$ variables and impose $n-1$ equations).
  Recall that by definition, $E'_\pir$ is reduced (and irreducible).
  Looking at the equations of $\hat D_\pi$, and discarding the zero
  and free variables, we are left with an inductive application of the
  easy lemma that if $\Spec A$ is reduced and equidimensional and
  $a\in A$, then $\Spec(A[u,v]/(uv-a))$ is reduced and equidimensional of
  one higher dimension. (Proof: filter by degree in $u,v$ and take
  associated graded to reduce to the case $a=0$.)

  Next we address generic reducedness of $D_\pi$.
  From the discussion that follows lemma~\ref{lem:tech}, we only need to test smoothness at points
  in $\tilde D_\pi^r \cap p^{-1}(E_\pir^{\prime\circ})$ for some $r$.
  (In fact, $r$ is uniquely determined by $\pi$ and $\pir$,
  so $D_\pi \cap p^{-1}(E_\pir^{\prime\circ}) = \tilde D_\pi^r \cap p^{-1}(E_\pir^{\prime\circ})$.)%
  \junk{proof? not that we need.}
  Any component of $D_\pi^r$ with the expected dimension $\dim D_\pi$ is also a component of $\hat D_\pi^r$,
  so generic reducedness of $D_\pi$ follows from that of $\hat D_\pi^r$.
\end{proof}

\subsection{Type $\tp$ degeneration and $H_T^*$ classes}\label{ssec:degenmdeg}

Let us start from \eqref{eq:degen}. Thanks to lemma~\ref{lem:Dred},
$D_\pi$ can only differ from the union of $D_\pi^r$ by embedded components (which we conjecture
aren't there):
\begin{equation}\label{eq:degenx}
D_\pi = \bigcup_{\substack{r\text{ row of type }\tp\\\text{compatible with }\pi}} D_\pi^r  \qquad \text{(plus possible embedded components)}
\end{equation}
where conventionally, $D_\pi^r := \varnothing$ if $r$ does not appear
in the process described in \S\ref{ssec:square} (we shall show shortly
that this never occurs).

We want to show that the defining equations of $D_\pi^r$ are those given in lemma~\ref{lem:Deqs}, i.e.,
that $D_\pi^r=\hat D_\pi^r$. For now we only have inclusion, so
\begin{equation}\label{eq:degena}
D_\pi \subseteq \bigcup_{\substack{r\text{ row of type }\tp\\\text{compatible with }\pi}} \hat D_\pi^r  \qquad \text{(plus possible embedded components)}
\end{equation}

We now compute $H_T^*$ classes and conclude using the positivity
of our multigrading (see \S\ref{ssec:pos}) that the inclusion of \eqref{eq:degena} is an equality.

In order to proceed, we could compute $[\hat D_\pi^r]$.
Instead of doing so, we shall lazily bound it from above as follows:
by lexing variables say $X_{1i}$,
one can effectively set to zero the r.h.s.\ of the second set of equations in lemma~\ref{lem:Deqs}
(see lemma~\ref{lem:Dred} for a similar argument).
The resulting flat limit of $\hat D_\pi^r$ then sits inside the product of $E'_{\pir}$ with a complete intersection
inside $\CC^{2n}$ given by the $n-1$ equations, each involving $X_{1j}$ and $Y_{j1}$, $j\ne \pi(1)$
(in particular this product has the expected dimension $\dim D_\pi$).
Furthermore, computing the weights of these equations, we recognize
the contribution of the row of tiles $r$ (except for the tile in column $\pi(1)$), so that
\begin{equation}\label{eq:mdeg2}
  [\hat D_\pi^r] \le [E'_{\pir}] \prod_{\substack{j=1\\j\ne\pi(1)}}^n \wt_{\tp}(r_j,x_1-y_j)
\end{equation}
where $r_j$ is the the tile in the $j^{\rm th}$ column of $r$, and we recall from \S\ref{sec:intro} that
\[
  \wt_{\tp}(\text{tile},x_1-y_j)=
  \begin{cases}
    B-x_1+y_j&\text{blank tile}\\
    A+x_1-y_j&\text{straight tile}\\
    A+B&\text{elbow tile}
  \end{cases}
\]
It is understood that here,
$[E'_{\pir}]\in \ZZ[x_2,\ldots,x_m,y_1,\ldots,y_n,A,B]$ (i.e. no $x_1$)
since the $T$-weights of $(X',Y')$ live in that subring.

\eqref{eq:degena} implies
\begin{equation}\label{eq:mdeg1}
  [E_\pi] = [D_\pi] \le \Bigg[\bigcup_{\substack{r\text{ row of type }\tp\\\text{compatible with }\pi
    }} \hat D_\pi^r\Bigg] 
    \le \sum_{\substack{r\text{ row of type }\tp\\\text{compatible with }\pi
      }} [\hat D_\pi^r]
\end{equation}

Combining \eqref{eq:mdeg2} and \eqref{eq:mdeg1},
\begin{equation}\label{eq:mdeg3}
  [E_\pi] 
\le \sum_{\substack{r\text{ row of type }\tp\\\text{compatible with }\pi}} [E'_{\pir}] \prod_{\substack{j=1\\j\ne\pi(1)}}^n \wt_{\tp}(r_j,x_1-y_j)
\end{equation}

We can now apply corollary~\ref{cor:main}
first to $E_\pi$, where $\beta=(\tp,\beta_2,\ldots,\beta_m)$ is any binary sequence which starts with $\tp$:
\begin{align*}
  [E_\pi] &=(A+B)^{-m}G_\pi = (A+B)^{-m}\hspace{-5mm}\sum_{\substack{m\times n\text{ GPDs of type }\beta\\\text{compatible with }\pi}}
  \prod_{i=1}^m \prod_{j=1}^n \wt_{\beta_i}(\text{tile}(i,j),x_i-y_j)
  \\
              &= \sum_{\substack{r\text{ row of type }\tp\\\text{compatible with }\pi}} \prod_{\substack{j=1\\j\ne\pi(1)}}^n \wt_{\tp}(r_j,x_1-y_j)\ (A+B)^{-m+1}\hspace{-10mm} \sum_{\substack{(m-1)\times n\text{ GPDs of type }\beta'\\\text{and connectivity }\pir}}
\prod_{i=1}^{m-1} \prod_{j=1}^n \wt_{\beta_{i+1}}(\text{tile}(i,j),x_{i+1}-y_j)
  \\
  &=\sum_{\substack{r\text{ row of type }\tp\\\text{compatible with }\pi}} [E'_{\pir}] \prod_{\substack{j=1\\j\ne\pi(1)}}^n \wt_{\tp}(r_j,x_1-y_j)
\end{align*}
where $\beta'=(\beta_2,\ldots,\beta_m)$, and in the last line we've applied once again corollary~\ref{cor:main}, but this time to $E'_{\pir}$.
In conclusion, the inequality of \eqref{eq:mdeg3} is an equality,
hence so are all intermediate inequalities. 
In particular, 
the $\hat D_\pi^r$ intersect in pairs in codimension $>0$,
and $D_\pi^r = \hat D_\pi^r$ as sets,
but since both are reduced (lemma~\ref{lem:Dred})
we have $D_\pi^r = \hat D_\pi^r$.

We are ready to conclude:
\begin{prop}\label{prop:degen}
  The flat limit $D_\pi$ of $E_\pi$ takes the form
  \[
    D_\pi = \bigcup_{\substack{r\text{ row of type }\tp\\\text{compatible with }\pi}} D_\pi^r
    \quad \text{(plus possible embedded components)}
  \]
  where the $D_\pi^r$ are the geometric irreducible components of $D_\pi$ and are given by the equations of lemma~\ref{lem:Deqs}.
\end{prop}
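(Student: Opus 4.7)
The plan is to combine the square-by-square construction of \S\ref{ssec:square} with a multidegree comparison, using positivity of the $T$-multigrading (\S\ref{ssec:pos}) to upgrade the set-theoretic decomposition of $D_\pi$ to a scheme-theoretic one and to identify the equations of each component. First I would invoke \eqref{eq:degen}, which gives $D_\pi = \bigcup_r \tilde D_\pi^r$ as a set with $r$ ranging over rows output by the procedure, and then define $D_\pi^r$ by intersecting with $p^{-1}(E_\pir^{\prime\circ})$ and taking the reduced closure. Thanks to lemma~\ref{lem:tech}, the discarded pieces have dimension strictly less than $\dim E_\pi = m(n+1)$, so they can only contribute embedded components to $D_\pi$, and we get the set-theoretic decomposition $D_\pi = \bigcup_r D_\pi^r$ (up to embedded junk).

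By lemma~\ref{lem:Deqs} we have $D_\pi^r \subseteq \hat D_\pi^r$, and by lemma~\ref{lem:Dred} the scheme $\hat D_\pi^r$ is reduced and equidimensional of dimension $\dim D_\pi$. Hence to upgrade set-theoretic equality $D_\pi^r = \hat D_\pi^r$ to scheme-theoretic equality, I would show that the summed inclusion $[D_\pi] \le \sum_r [\hat D_\pi^r]$ is an equality of $H_T^*$-classes; by positivity this forces equality of subschemes. To bound $[\hat D_\pi^r]$ from above I would perform a further lex-degeneration of lemma~\ref{lem:Deqs}, zeroing out the right-hand sides of the second group of equations. This degenerates $\hat D_\pi^r$ to the product of $E'_{\pir}$ with a complete intersection of $n-1$ conics $X_{1j}Y_{j1}=0$ in $\CC^{2n}$, producing the bound \eqref{eq:mdeg2} in which the tile weights $\wt_\tp(r_j, x_1-y_j)$ are recognized exactly as the weights of those defining equations.

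To match this with $[E_\pi]$ from below I would apply corollary~\ref{cor:main} twice: once to $E_\pi$ for any hybridization $\beta$ starting with $\tp$, and once to each $E'_{\pir}$ for the truncated hybridization $\beta' = (\beta_2, \ldots, \beta_m)$. The first application expresses $(A+B)^m [E_\pi]$ as a sum of products of tile weights over $m \times n$ GPDs of type $\beta$ with connectivity $\pi$; grouping these by their top row $r$ and applying the second form of corollary~\ref{cor:main} to the remaining $(m-1)\times n$ GPDs reproduces the right-hand side of \eqref{eq:mdeg3} on the nose. Since $[E_\pi]$ is sandwiched between itself and this upper bound, every intermediate inequality must be an equality, forcing $D_\pi^r = \hat D_\pi^r$ set-theoretically; reducedness of both sides then promotes this to equality of schemes. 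As a bonus, each compatible $r$ must actually index a nonempty component, since its corresponding term in the GPD sum is a nonzero polynomial.

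The main obstacle, and the reason the argument is indirect, is that we have no direct scheme-theoretic handle on $D_\pi^r$ at blank tiles: the degeneration of the flux equation there gives only $X_{1j}^2 Y_{j1} = 0$, which is \emph{a priori} weaker than the desired $Y_{j1}=0$ and reflects a genuine potential nonreducedness that must be washed out by the passage to a generic part of $E'_{\pir}$. The detour through corollary~\ref{cor:main} and positivity is precisely what closes this gap, simultaneously certifying the equations of lemma~\ref{lem:Deqs} and confirming that the combinatorially defined compatible rows are exactly the indexing set of geometric components.
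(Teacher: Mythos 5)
Your argument reproduces, essentially, the multidegree comparison of \S\ref{ssec:degenmdeg} that the paper itself uses to establish $D_\pi^r = \hat D_\pi^r$: the same chain of inequalities \eqref{eq:mdeg1}--\eqref{eq:mdeg3}, the same double application of corollary~\ref{cor:main}, and the same use of positivity (\S\ref{ssec:pos}) to collapse the chain into equalities. That part is correct, modulo one small imprecision: after the further lex degeneration, the $n-1$ equations cutting out the complete intersection are $X_{1j}Y_{j1}=0$ only at elbow tiles; at straight and blank tiles they are the linear equations $X_{1j}=0$ and $Y_{j1}=0$ respectively, and it is this case split that makes the weights match the tile weights $\wt_\tp(r_j,x_1-y_j)$. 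Your closing paragraph also slightly misattributes where $Y_{j1}=0$ at blank tiles comes from: it is established scheme-theoretically on $\tilde D_\pi^r\cap p^{-1}(E^{\prime\circ}_{\pir})$ in lemma~\ref{lem:Deqs} via the degenerated rank conditions \eqref{eq:degenrank}, not by the positivity detour; positivity is what certifies the \emph{reverse} inclusion, i.e.\ that there are no further equations.

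The genuine gap is that you never prove the $D_\pi^r$ are \emph{irreducible}, which the proposition asserts (``the $D_\pi^r$ are the geometric irreducible components of $D_\pi$''). Knowing $D_\pi^r=\hat D_\pi^r$ with $\hat D_\pi^r$ reduced and equidimensional (lemma~\ref{lem:Dred}), and that the $\hat D_\pi^r$ pairwise intersect in positive codimension, does not preclude a single $\hat D_\pi^r$ from breaking into several geometric components. This is precisely the point the paper's formal proof addresses: over a point of $E^{\prime\circ}_{\pir}$, the equations of lemma~\ref{lem:Deqs} in the variables $X_{1j},Y_{j1}$ read, column by column, either $X_{1j}=0$, or $Y_{j1}=0$, or $X_{1j}Y_{j1}=(\text{something nonzero})$, so $\tilde D_\pi^r\cap p^{-1}(E^{\prime\circ}_{\pir})$ is a fiber bundle over the irreducible base $E^{\prime\circ}_{\pir}$ with irreducible fibers, hence irreducible, and so is its closure $D_\pi^r$. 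Adding this observation completes your argument.
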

\begin{proof}
    The only thing left to prove is that $D_\pi^r$ is irreducible.
  If $(X,Y)\in \tilde D_\pi^r \cap p^{-1}(E'{}^\circ_{\pir})$, then the equations of lemma~\ref{lem:Deqs} satisfied by the $X_{1j}$ and $Y_{j1}$
  take the form: $X_{1j}=0$ or $Y_{j1}=0$ or $X_{1j}Y_{j1}=\,$(something nonzero). Therefore
  $\tilde D_\pi^r \cap p^{-1}(E'{}^\circ_{\pir})$ is a fiber bundle over the variety $E'{}^\circ_{\pir}$ with irreducible fibers,
  hence irreducible, and so is its closure $D_\pi^r$.
\end{proof}

\subsection{Type $\bt$ degeneration}\label{ssec:bdegen}
Recall the involution $\iota$ on $E$ from \S\ref{ssec:invo}.
We define the type $\bt$ one-row degeneration of $E$ and its components by 
conjugating the type $\tp$ one-row degeneration with $\iota$.
It corresponds to degenerating the last row (resp.\ column) of $X$ (resp.\ $Y$). 

In practice, it will be useful to write some of the formul\ae\ explicitly.

The weights of the circle action are
\[
[X_{ij}]=\begin{cases}
  j&i=m
\\
0&\text{else}
\end{cases}
\qquad
[Y_{ji}]=\begin{cases}
-j&i=m
\\
0&\text{else}
\end{cases}
\]
\rem[gray]{same as $\tp$-degeneration -- there used to be $j-(n+1)$ and $n+1-j$ but it makes no difference!
  i.e., one can lex/revlex from one end or revlex/lex from the other end of the row, same thing}
so we are degenerating the last row (resp.\ column) of $X$ (resp.\ $Y$).

The fluxes on a row of type $\bt$ are defined as follows:
\begin{itemize}
\item On the top horizontal edge on column $j$, define the flux to be $(YX)_{jj}$.
\item On the bottom horizontal edge on column $j$, define the flux to be $(Y'X')_{jj}$.
\item On the vertical edges, numbered from $0$ to $j$, define the flux to be
  $\sum_{j'<j} X_{mj'}Y_{j'm}$.
\end{itemize}
One has $\Phi_\dN-\Phi_\dS=X_{mj}Y_{jm}=\Phi_\dE-\Phi_\dW$, so the flux conservation equation reads
\begin{equation}\label{eq:conservb}
\begin{tikzpicture}[baseline=-3pt]
  {\setlength{\loopcellsize}{1.25cm}\plaq{}}
\node[shape=isosceles triangle,shape border rotate=90,inner sep=2pt,fill=blue,label={above:$\Phi_\dN$}] at (plaq.north) {};
\node[shape=isosceles triangle,shape border rotate=90,inner sep=2pt,fill=blue,label={below:$\Phi_\dS$}] at (plaq.south) {};
\node[shape=isosceles triangle,shape border rotate=180,inner sep=2pt,fill=blue,label={right:$\Phi_\dE$}] at (plaq.east) {};
\node[shape=isosceles triangle,shape border rotate=180,inner sep=2pt,fill=blue,label={left:$\Phi_\dW$}] at (plaq.west) {};
\end{tikzpicture}
\qquad
\Phi_\dE+\Phi_\dS=\Phi_\dW+\Phi_\dN
\end{equation}

If $r$ is a row of type $\bt$ which is compatible with $\pi$,
define (similarly to type $\tp$)
$\pir\in\SS_{m-1,n}$ to be the connectivity after removing the row $r$ from $\pi$, except no renumbering is needed,
so $(\pir)(i)$ is the column where pipe numbered
$i$ exits at the bottom of $r$.

The analogue of lemma~\ref{lem:Deqs} and proposition~\ref{prop:degen} is
\begin{prop}\label{prop:degenb}
  The flat limit $F_\pi$ of $E_\pi$ under type $\bt$ degeneration has a decomposition into irreducible components
  \[
    F_\pi = \bigcup_{\substack{r\text{ row of type }\bt\\\text{compatible with }\pi}} F_\pi^r
        \quad \text{(plus possible embedded components)}
  \]
  where $F_\pi^r$ is given by the following equations:
  if $(X,Y)\in F_\pi^r$ and $X'=(X_{ij})_{\substack{1\le i\le m-1\\1\le j\le n}}$, $Y'=(Y_{ji})_{\substack{1\le j\le n\\1\le i\le m-1}}$,
  \begin{itemize}
  \item $(X',Y')\in E'_{\pir}$.
  \item $X_{mj}Y_{jm}=\begin{cases}
    (X'Y')_{ii}-(Y'X')_{jj}
    \\[3mm]
    (X'Y')_{ii}
    \\[3mm]
    -(Y'X')_{jj}
  \end{cases}$ if there is an elbow tile in column $j$ of $r$
  $\begin{cases}
    \plaqctr{b}\qquad \pi(i)=j,\ i<m
    \\[2mm]
    \plaqctr{i}\qquad \pi(i)=j,\ i<m
    \\[2mm]
    \plaqctr{k}
  \end{cases}$
  \item $Y_{jm}=0$ if there is a straight tile in column $j$ of $r$.
  \item $X_{mj}=0$ if there is a blank tile in column $j$ of $r$.
  \end{itemize}
  and its $H_T^*$ class is given by
  \[
    [F_\pi^r]
  =[E'_{\pir}] \prod_{\substack{j=1\\j\ne\pi(m)}}^n \wt_{\bt}(r_j,x_m-y_j)
  \]
where
\[
  \wt_{\bt}(\text{tile},x_m-y_j)=
  \begin{cases}
    A+x_m-y_j&\text{blank tile}\\
    B-x_m+y_j&\text{straight tile}\\
    A+B&\text{elbow tile}
  \end{cases}
\]
\end{prop}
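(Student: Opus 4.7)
The plan is to derive this proposition from proposition~\ref{prop:degen} by conjugating the entire type $\tp$ analysis with the involution $\iota$ of \S\ref{ssec:invo}. Since $\iota$ carries $E_\pi$ isomorphically to $E_{\gamma_n\pi\gamma_m}$ and swaps the roles of $X$ and $Y$ (up to flips by $\gamma_m$ and $\gamma_n$), the last-row-of-$X$/last-column-of-$Y$ degeneration used to define $F_\pi$ corresponds, after the reindexing $j\leftrightarrow n+1-j$, to the first-row-of-$X$/first-column-of-$Y$ degeneration used to define $D_{\gamma_n\pi\gamma_m}$. The parenthetical remark in \S\ref{ssec:bdegen} justifies that reversing the direction on a single row produces the same initial ideal (the weights differ only by an additive constant on each row/column), so $\iota$ directly identifies $F_\pi$ with the $\iota$-image of $D_{\gamma_n\pi\gamma_m}$.

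From this identification, the set-theoretic decomposition $F_\pi=\bigcup_r F_\pi^r$ and irreducibility of each piece (together with the generic reducedness of $F_\pi$) transport across from proposition~\ref{prop:degen} and lemma~\ref{lem:Dred} respectively. To list the pieces, I would invoke the mirror map of lemma~\ref{lem:mirror}, which puts rows of type $\bt$ compatible with $\pi$ in bijection with rows of type $\tp$ compatible with $\gamma_n\pi\gamma_m$, sending elbow tiles \plaqctr{a},\plaqctr{j},\plaqctr{r} to \plaqctr{b},\plaqctr{i},\plaqctr{k} and exchanging the roles of straight and blank tiles. Pulling the four bullet points of lemma~\ref{lem:Deqs} back through $\iota$ then produces the four bullet points stated here: $X_{1j}=0$ for a $\tp$-straight tile becomes $Y_{jm}=0$ for a $\bt$-straight tile (since $\iota$ swaps $X$ and $Y$), $Y_{j1}=0$ for a $\tp$-blank tile becomes $X_{mj}=0$, and the elbow equations for $X_{1j}Y_{j1}$ become the stated equations for $X_{mj}Y_{jm}$, with the constraint $i>1$ turning into $i<m$ under conjugation by $\gamma_m$. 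The flux-conservation equation \eqref{eq:conservb} is an immediate consequence of $\Phi_\dN-\Phi_\dS=X_{mj}Y_{jm}=\Phi_\dE-\Phi_\dW$.

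For the cohomology class formula, I would apply the corollary of \S\ref{ssec:invo} componentwise to get
\[
  [F_\pi^r] = [D_{\gamma_n\pi\gamma_m}^{r'}]\big|_{A\leftrightarrow B,\ x_i\leftrightarrow -x_{\gamma_m(i)},\ y_j\leftrightarrow -y_{\gamma_n(j)}}
\]
where $r'$ is the mirror image of $r$, and then check tile by tile that these substitutions convert the factor $\wt_\tp(r'_j,\,x_1-y_{n+1-j})$ appearing in proposition~\ref{prop:degen} into $\wt_\bt(r_j,\,x_m-y_j)$, and the factor $[E'_{\gamma_n\pi\gamma_m\backslash r'}]$ into $[E'_{\pir}]$ (the latter using the $\iota$-corollary inductively applied to $E'$). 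The main bookkeeping obstacle, which I expect to require explicit care rather than a clever shortcut, is the matching of pipe-label conventions: type $\bt$ rows number pipes from the East, so in the definition of $\pir$ no $+1$ shift is required (in contrast to the $\tp$ case where pipe $1$ has exited), and one must verify this asymmetry is exactly what the composition of $\iota$ with the mirror map produces.
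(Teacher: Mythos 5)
Your proposal is correct and is essentially the paper's own argument: the paper \emph{defines} the type $\bt$ one-row degeneration as the $\iota$-conjugate of the type $\tp$ one and then states proposition~\ref{prop:degenb} as the transported analogue of lemma~\ref{lem:Deqs} and proposition~\ref{prop:degen}, which is exactly the transport-of-structure you carry out (including the $j\leftrightarrow n+1-j$ reindexing, the weight substitution from the corollary in \S\ref{ssec:invo}, and the absence of the $+1$ shift in the definition of $\pi\backslash r$). Your extra care about the additive-constant discrepancy in the degeneration weights and the pipe-relabeling conventions only makes explicit bookkeeping that the paper leaves implicit.
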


\subsection{Putting it all together}
At this stage, all the ingredients are in place to build inductively the full degeneration of $E_\pi$:
we apply a one-row degeneration as in \ref{ssec:tdegen}--\ref{ssec:bdegen},
then notice that there are two types of equations of each piece $D_\pi^r$ (resp.\ $F_\pi^r$) in lemma~\ref{lem:Deqs}
(resp.\ proposition~\ref{prop:degenb}):
\begin{itemize}
\item Equations involving $X_{1j}$ and $Y_{j1}$ (resp.\ $X_{mj}$ and $Y_{jm}$) and fluxes.
The latter are linear combinations of $X'_{ij}Y'_{ji}$, which are unaffected by any of our degenerations, so we can safely ignore them.
\item Equations involving the remaining variables, of the form $(X',Y')\in E'_{\pir}$.
\end{itemize}
We can therefore apply one more degeneration to $E'_{\pir}$ and keep repeating until $m=0$.

By concatenating the $m$ rows produced this way from top to bottom, we obtain a full $m\times n$ pipe dream.
In order to write explicitly the equations of the resulting variety, 
recall the flux variables from \S\ref{sssec:flux}:  \rem[gray]{convention is opposite in m2 file: ``vertical flux'' is across horizontal edge}
\begin{alignat*}{2}
\Phi_{V(i,j)} &= \sum_{\substack{j'>j\text{ row }\tp\\j'\le j\text{ row }\bt}} X_{\varphi(i)j'}Y_{j'\varphi(i)}
\qquad
&&\begin{aligned} i&=1,\ldots,m\\ j&=0,\ldots,n
\end{aligned}
\\
  \Phi_{H(i,j)} &= \sum_{i'>i} X_{\varphi(i')j}Y_{j\varphi(i')}
  \\
  &= \sum_{i_1<i'<i_2} X_{i'j}Y_{ji'}
\qquad
&&\begin{aligned} i&=0,\ldots,m\\ j&=1,\ldots,n
\end{aligned}
\qquad
\begin{aligned}i_1&=\#\{a\le i\mid \beta_a=\tp\}\\ i_2&=m+1-\#\{a\le i\mid \beta_a=\bt\}
\end{aligned}
\end{alignat*}

As a consistency check,
the fluxes on boundary edges are $(YX)_{jj}$ on the North side column $j$, $(XY)_{\varphi(i)\varphi(i)}$ on West or East ends of row $i$
depending on whether $\beta_i=\tp$ or $\bt$, and zero elsewhere. 
Because flux equations are unaffected by degeneration,
\eqref{eq:flux} satisfied on $E_\pi$ is also satisfied on any component of the degeneration.
This means the pipe numbered $\varphi(i)$ exits at the top at column $j=\pi(\varphi(i))$,
i.e., the pipe dream
has connectivity $\pi$, as foreshadowed at the end of \S\ref{ssec:flux}.

Also, recall the notation $t_i:=(XY)_{ii}$ for the flux of the pipe labeled $i$.

We can at last state our main result, which is an expanded version of theorem~\ref{thm:main} \textit{(2)} and proposition~\ref{prop:flux}:

\begin{thm}\label{thm:mainfull}
  For each hybridization $\beta$, there is a (partial Gr\"obner)
  degeneration of each $E_\pi$ into a union of varieties $V_\delta$
  indexed by hybrid generic pipe dreams $\delta$ of type $\beta$ with
  connectivity $\pi$ (possibly, though conjecturally not, with the
  addition of some embedded components), where $V_\delta$ is given by
  \[
    V_\delta := \left\{ (X,Y)\in \Mat(m,n,\CC)\times \Mat(n,m,\CC)
      : 
      \begin{aligned}
        X_{\varphi(i)j}&=0\text{ if square $(i,j)$ is a }
        \left\{
          \begin{aligned}
            \text{straight tile and }\beta_i&=\tp
            \\
            \text{blank tile and }\beta_i&=\bt
          \end{aligned}\right.
        \\
        Y_{j\varphi(i)}&=0\text{ if square $(i,j)$ is a }
        \left\{
          \begin{aligned}
            \text{blank tile and }\beta_i&=\tp
            \\
            \text{straight tile and }\beta_i&=\bt
          \end{aligned}\right.
        \\
        \Phi_e&=\begin{cases}t_i&\text{if pipe $i$ passes through edge $e$}\\0&\text{else}\end{cases}
      \end{aligned}
    \right\}
  \]
  Each $V_\delta$ is a complete intersection, and the contribution
  of the pipe dream $\delta$ to $G_\pi$ is $(A+B)^m$ times $[V_\delta]$.
\end{thm}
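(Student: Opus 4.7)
The plan is to prove the theorem by induction on $m$, assembling the full degeneration row by row out of the one-row degenerations already constructed. The base case $m=0$ is trivial (the pipe dream is empty and $E_\pi = \{\text{pt}\}$).

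For the inductive step, I apply the one-row degeneration corresponding to $\beta_1\in\{\tp,\bt\}$: if $\beta_1=\tp$, invoke Proposition~\ref{prop:degen}; if $\beta_1=\bt$, invoke Proposition~\ref{prop:degenb} (which is obtained from the $\tp$ case by the involution $\iota$). In either case, the flat limit decomposes (up to possible embedded components) as $\bigcup_r D_\pi^r$ (resp.\ $\bigcup_r F_\pi^r$) over rows $r$ of the appropriate type that are compatible with $\pi$. Crucially, the defining equations of each piece split cleanly into two kinds, as spelled out in Lemma~\ref{lem:Deqs}: equations involving only the first-row variables $X_{\varphi(1)j}$, $Y_{j\varphi(1)}$ (the vanishing equations for straight/blank tiles and the flux-equation reformulation for elbow tiles), and the equations $(X',Y')\in E'_{\pi\backslash r}$ on the remaining variables. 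Since the two sets of variables are disjoint, applying a further degeneration of type $\beta_2$ to $E'_{\pi\backslash r}$ commutes with the first-row equations and the induction goes through.

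By induction applied to $E'_{\pi\backslash r}$ with hybridization $\beta'=(\beta_2,\ldots,\beta_m)$, we obtain a decomposition of the flat limit of $E'_{\pi\backslash r}$ into varieties $V'_{\delta'}$ indexed by $(m-1)\times n$ HGPDs $\delta'$ of type $\beta'$ with connectivity $\pi\backslash r$. Concatenating $r$ on top of $\delta'$ produces an $m\times n$ HGPD $\delta$ of type $\beta$ with connectivity $\pi$ (the connectivity matches because the flux equations \eqref{eq:flux} are preserved through every degeneration, forcing pipe $\varphi(i)$ to exit at column $\pi(\varphi(i))$). Collecting the first-row equations from Lemma~\ref{lem:Deqs} with the inductively obtained equations of $V'_{\delta'}$ reproduces exactly the equations listed in the statement. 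Conversely, every such $\delta$ arises this way by peeling off its top row.

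For the complete-intersection statement, I count codimensions. The top row imposes $n-1$ scalar equations (one for each column $j\neq\pi(\varphi(1))$: either $X_{\varphi(1)j}=0$, $Y_{j\varphi(1)}=0$, or the single flux equation $X_{\varphi(1)j}Y_{j\varphi(1)}=\Phi_{\dN}-\Phi_{\dS}$ for an elbow). By induction, the equations from the remaining $m-1$ rows together cut $V'_{\delta'}$ as a complete intersection of codimension $(m-1)(n-1)$ inside $\Mat(m-1,n)\times\Mat(n,m-1)$. Summing yields codimension $m(n-1)=\dim\Mat(m,n)\times\Mat(n,m)-\dim E_\pi$, confirming complete intersection (this matches the Cohen--Macaulay/reducedness argument of Lemma~\ref{lem:Dred}). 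For the $H^*_T$ class, each row contributes $\prod_{j\neq\pi(\varphi(i))}\wt_{\beta_i}(\delta_{ij},x_{\varphi(i)}-y_j)$; the missing column $j=\pi(\varphi(i))$ carries an elbow tile whose weight $A+B$ is exactly the extra factor needed. Multiplying all $m$ rows and using $\wt_{\beta_i}(\text{elbow})=A+B$ gives $(A+B)^m\,[V_\delta]$ equal to the pipe-dream weight of $\delta$, and summing over $\delta$ recovers $G_\pi=(A+B)^m[E_\pi]$ from Corollary~\ref{cor:main}, consistent with the equality of the chain of inequalities in \eqref{eq:mdeg3} at each inductive step.

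The main obstacle I anticipate is not in any single step but in the bookkeeping of the flux variables across the recursion: when one degenerates the second row, the products $X_{\varphi(i)j}Y_{j\varphi(i)}$ that appear as fluxes on the first row's south-edges must be re-expressed in terms of the degenerated scheme's variables, and one must verify that the scheme-theoretic (not just set-theoretic) equations of the inner pieces $V'_{\delta'}$ still hold inside $V_\delta$ after the outer row is fixed. The footnote \ref{ft:nonred} in the text already warns that the fluxes are initially known only set-theoretically; the argument via $\hat D_\pi^r$ and positivity (Section~\ref{ssec:pos}) in the one-row case shows the scheme structure coincides in codimension zero, and the same argument applied inductively---comparing the total class $[V_\delta]$ obtained by multiplying weights against the known value of $(A+B)^{-m}G_\pi$---forces equality of schemes up to embedded components at every stage.
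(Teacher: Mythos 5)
Your proposal follows essentially the same route as the paper: iterate the one-row degenerations of Propositions~\ref{prop:degen} and \ref{prop:degenb}, observe that the equations of each piece split into first-row equations plus the condition $(X',Y')\in E'_{\pi\backslash r}$, count $n-1$ equations per row for the complete-intersection claim, and match tile weights, with the one unconstrained elbow per row supplying the factor $(A+B)^m$. One correction: the elbow equations are \emph{not} supported on a set of variables disjoint from the later rows---they involve $(X'Y')_{i-1\,i-1}$ and $(Y'X')_{jj}$---but the obstacle you flag at the end is dispatched in the paper more directly than by your proposed multidegree comparison, namely by noting that these fluxes are linear combinations of the products $X'_{ij}Y'_{ji}$, each of which has weight zero under every subsequent one-row degeneration, so the elbow equations persist verbatim through the remaining steps.
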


\begin{proof}
As explained above, we apply repeatedly the degenerations of type $\beta_i$, $i=1,\ldots,m$.
The flux equations ($\Phi_e=t_i$ or $0$) are a consequence of the second equations in the list of lemma~\ref{lem:Deqs}
and proposition~\ref{prop:degenb}, as can be shown by an easy induction.

Note that as stated, we have more defining equations for $V_\delta$ than its codimension $m(n-1)$. The reason is that the flux
equations, viewed as linear equations on the $X_{ij}Y_{ji}$, are not linearly independent. 
One can work out by hand the combinatorics of independent flux equations, or use the existing counting of equations given
in lemma~\ref{lem:Deqs} and proposition~\ref{prop:degenb}, where it is shown that every row contributes $n-1$ equations,
which implies that $V_\delta$ is a complete intersection; among which
flux equations (of weight $A+B$) for each elbow tile except one. This leaves a discrepancy of $m$ elbow tiles that contribute
to $G_\pi$ but not to $[V_\delta]$, hence the factor of $(A+B)^m$. The weights of the $X_{\varphi(i)j}=0$ and $Y_{j\varphi(i}=0$ equations
match the weights of the corresponding tiles, cf \eqref{eq:wt}.

The irreducibility of $V_\delta$ follows from its construction as the closure of an iterated fiber bundle with irreducible fibers.%
\junk{is this good enough? AK:yes}
$[V_\delta]=\mu\, [\text{red}(V_\delta)]$ where $\mu\in\ZZ_{\ge1}$ is the multiplicity of $V_\delta$. Since no factor of $[V_\delta]$
is divisible by $\mu>1$, we learn $\mu=1$, i.e., $V_\delta$ is generically reduced, and since
it's a complete intersection, it's reduced, hence a variety.
\end{proof}

Finally, we reconnect the involutions $\iota$ (\S\ref{ssec:invo}) and left-right mirror image on pipe dreams (\S\ref{ssec:mirror}):
\begin{prop}
Let $\delta$ be a hybrid generic pipe dream of type $\beta$ with connectivity $\pi$,
and $\bar\delta$ its mirror image, 
a hybrid generic pipe dream of type $\bar\beta$ with connectivity $\gamma_n\pi\gamma_m$.
Then $\iota(V_\delta)=V_{\bar\delta}$.
\end{prop}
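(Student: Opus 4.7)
The plan is to prove this by directly checking that $\iota$ carries the defining equations of $V_\delta$ (as given in theorem~\ref{thm:mainfull}) to those of $V_{\bar\delta}$. The whole proof hinges on a few simple compatibilities between the involution $\iota$ and the mirror operation on pipe dreams, all of which should follow from bookkeeping.

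First I would record the basic dictionary. Under $(X',Y')=\iota(X,Y)=(\gamma_m Y^T\gamma_n,\gamma_n X^T\gamma_m)$ one has $X_{ab}=Y'_{\gamma_n(b),\gamma_m(a)}$ and $Y_{ba}=X'_{\gamma_m(a),\gamma_n(b)}$, so $\iota$ swaps $X$ with $Y$ up to the $\gamma$'s. On the combinatorial side, the mirror bijection of lemma~\ref{lem:mirror} sends tile $(i,j)$ of $\delta$ to tile $(i,\gamma_n(j))$ of $\bar\delta$; straight tiles mirror to straight tiles and blanks to blanks, while the row type flips $\beta_i\leftrightarrow\bar\beta_i$. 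A direct count of the counterclockwise numbering (pipes on the West are numbered top to bottom, pipes on the East bottom to top) gives $\bar\varphi(i)=\gamma_m(\varphi(i))$ for every $i$, and correspondingly the pipe labeled $k$ in $\delta$ is relabeled $\gamma_m(k)$ in $\bar\delta$.

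Next I would check the ``support'' equations of $V_\delta$. An equation $X_{\varphi(i)j}=0$, imposed on straight tiles with $\beta_i=\tp$ or blanks with $\beta_i=\bt$, pulls back under $\iota$ to $Y'_{\gamma_n(j),\bar\varphi(i)}=0$. Using the tile bijection, the mirror tile sits at $(i,\gamma_n(j))$, is straight iff the original was straight and blank iff the original was blank, and $\bar\beta_i$ is $\bt$ or $\tp$ in the two cases respectively; this is exactly the condition under which $V_{\bar\delta}$ forces $Y_{\gamma_n(j)\,\bar\varphi(i)}=0$. The companion statement for $Y_{j\varphi(i)}=0$ equations is symmetric.

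For the flux equations, using $X_{\varphi(i)j'}Y_{j'\varphi(i)}=X'_{\bar\varphi(i),\gamma_n(j')}Y'_{\gamma_n(j'),\bar\varphi(i)}$ and the substitution $k=\gamma_n(j')$, the sum
\[
\Phi_{V(i,j)}=\sum_{\substack{j'>j\text{ row }\tp\\ j'\le j\text{ row }\bt}} X_{\varphi(i)j'}Y_{j'\varphi(i)}
\]
in $\delta$ becomes exactly $\Phi_{V(i,n-j)}$ in $\bar\delta$ (the summation range flips because the row type flips, and $\gamma_n$ sends $\{j'>j\}$ bijectively to $\{k\le n-j\}$); analogously for $\Phi_{H(i,j)}$. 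On the right-hand sides, $t_i=(XY)_{ii}$ transforms to $(X'Y')_{\gamma_m(i)\gamma_m(i)}=t_{\gamma_m(i)}$. Since the pipe passing through edge $e$ in $\delta$ with label $i$ becomes the pipe with label $\gamma_m(i)$ passing through the mirror edge in $\bar\delta$, the relation ``$\Phi_e=t_i$ whenever pipe $i$ passes through $e$, else $\Phi_e=0$'' is preserved verbatim by $\iota$.

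Having matched all three families of equations, I get $\iota(V_\delta)\subseteq V_{\bar\delta}$, and applying the same argument to $\iota^{-1}=\iota$ and $\bar{\bar\delta}=\delta$ yields the reverse inclusion, completing the proof. The only part that requires any care is the index bookkeeping for $\bar\varphi$ and for the flipped summation ranges in the flux formulas; once these identities are in hand, everything else is a routine translation.
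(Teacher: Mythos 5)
Your proposal is correct and follows essentially the same route as the paper: the paper's (sketched) proof likewise reduces to checking that $\iota$, combined with the index dictionary $\bar\varphi=\gamma_m\varphi$ and the left--right flip $j\mapsto\gamma_n(j)$, exchanges the roles of $X$ and $Y$ and carries the defining equations of $V_\delta$ from theorem~\ref{thm:mainfull} to those of $V_{\bar\delta}$ --- a verification the paper leaves as an exercise and you have carried out in full, with the correct bookkeeping for the support equations, the flipped summation ranges in the fluxes, and the relabeling $t_i\mapsto t_{\gamma_m(i)}$.
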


\begin{proof}[Sketch of proof]
  This is of course a consequence of the definition of type $\bt$
  degeneration as type $\tp$ degeneration conjugated by $\iota$, but
  one can also prove this directly.  As already pointed out in the
  proof of lemma~\ref{lem:mirror}, the numbering of pipes is reversed
  by the mirror image map, so $\bar\varphi=\gamma_m\varphi$. This
  combined with the left-right mirror image effectively implements a
  $180^\circ$ rotation of matrices $X$ and $Y$. The substitution
  $\beta\mapsto\bar\beta$ then exchanges the roles of $X$ and $Y$.  We
  leave it as an exercise to the reader to check in more detail that
  applying $\iota$ to the equations of $V_\delta$ given in
  theorem~\ref{thm:mainfull} produces those of $V_{\bar\delta}$.
\end{proof}

\appendix

\section{Equidimensionality of the rectangular lower-upper scheme}\label{app:equi}
In this appendix we show, as claimed in \S\ref{sec:lowerupper},
that the only irreducible components of $E$ are its top-dimensional
components $E_\pi$.
This result is similar in spirit to the main result of \cite{Rothbach}.

Given a $m\times n$ partial permutation matrix $\pi$, define a new (maximal rank)
partial permutation matrix $\bar\pi$
as follows: number the zero rows (resp.\ columns) of $\pi$, $i_1<\cdots<i_k$ (resp.\ $j_1<\cdots<j_{n-m+k}$). Then
\[
\bar\pi_{ij}=
\begin{cases}
1&\pi_{ij}=1\text{ or } (i=i_r\text{ and } j=j_{n-m+r})
\\
0&\text{else}
\end{cases}
\]
For example, if $\pi=\left(\begin{smallmatrix} 
0 & 0 & 0 & 0 & 0 & 0 \\
 0 & 0 & 1 & 0 & 0 & 0 \\
 1 & 0 & 0 & 0 & 0 & 0 \\
 0 & 0 & 0 & 0 & 0 & 0
\end{smallmatrix}\right)$,
then
$\bar\pi=\left(\begin{smallmatrix} 
 0 & 0 & 0 & 0 & 1 & 0 \\
 0 & 0 & 1 & 0 & 0 & 0 \\
 1 & 0 & 0 & 0 & 0 & 0 \\
 0 & 0 & 0 & 0 & 0 & 1
\end{smallmatrix}\right)$.

\begin{lem}\label{lem:equidense}
$
  E^\circ_\pi:=(N_-\times N_+)\cdot\left\{ (s\pi, \bar\pi^T t),\ s,t\ m\times m \text{ diagonal},\ st\text{ invertible with
    distinct eigenvalues}\right\}
$
is an open dense subset of $E^1_\pi$.
\end{lem}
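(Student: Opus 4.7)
The plan is to mirror the proof of lemma \ref{lem:dense}, using that replacing $\pi^T$ by $\bar\pi^T$ on the $Y$-side restores the identity $\bar\pi\bar\pi^T = I_m$ needed there. First, I verify that $(s\pi,\bar\pi^T t)\in E$: by the construction of $\bar\pi$, the extra $1$'s sit in the zero rows of $\pi$ at columns disjoint from the $\pi$-image, so $\pi\bar\pi^T = \pi\pi^T = \diag(\mathbf 1_R)$, where $R\subseteq[m]$ indexes the nonzero rows of $\pi$. Hence $XY = s\pi\bar\pi^T t$ is diagonal, and so is $YX = \bar\pi^T t s\pi$ (with $s_it_i$ in column $\pi(i)$ for $i\in R$, zero otherwise). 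Both being diagonal are trivially triangular, so $(s\pi,\bar\pi^T t)\in E$; and $p_1$ sends it to $s\pi\in(B_-\times B_+)\cdot\pi$, giving $E^\circ_\pi\subseteq E^1_\pi$.

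Next, I compute dimensions via the parametrization $\mu\colon(N_-\times N_+)\times U\to E^\circ_\pi$, where $U$ is the locus of diagonal pairs $(s,t)$ with $st$ invertible of distinct eigenvalues. The kernel of $d\mu$ at $((I,I),(s,t))$ consists of $(A_-,A_+,\dot s,\dot t)$ satisfying
\begin{align*}
A_-\,s\pi-s\pi\,A_++\dot s\,\pi &= 0,\\
A_+\,\bar\pi^T t-\bar\pi^T t\,A_-+\bar\pi^T\dot t &= 0.
\end{align*}
Multiplying the second equation by $\bar\pi$ on the left and using $\bar\pi\bar\pi^T = I_m$ expresses $A_-$ in terms of the $\bar\pi([m])\times\bar\pi([m])$-block of $A_+$ and forces $\dot t = 0$, as in lemma \ref{lem:dense}. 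Substituting into the first equation and using the distinct eigenvalues of $st|_R$ forces the rows (and certain columns) of $A_+$ indexed by $\pi(R)$ to vanish, along with $\dot s_i = 0$ for $i\in R$. Tracking which entries of $A_+$ survive --- taking advantage of the fact that the ``extra image'' columns $\bar\pi([m])\setminus\pi([m])$ all lie to the right of the non-image columns $[n]\setminus\bar\pi([m])$ --- one finds that the surviving kernel parameters are the $\binom{n-m}{2}$ strict-upper entries of $A_+$ supported in the non-image block (matching lemma \ref{lem:dense}), together with the $k = m - \rank\pi$ components $\dot s_i$ for $i \in [m]\setminus R$ (free because $\dot s_i\pi = 0$). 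Hence $\dim\ker d\mu = \binom{n-m}{2} + k$, and so
\[
\dim(\operatorname{im} d\mu) = \binom{m}{2}+\binom{n}{2}+2m - \binom{n-m}{2} - k = mn + (m-k) = mn + \rank\pi,
\]
which equals $\dim E^1_\pi$ by lemma \ref{lem:dim}. Therefore $d\mu$ is surjective onto $T_{(s\pi,\bar\pi^T t)} E^1_\pi$, and by the submersion theorem the image of $\mu$ contains an open neighbourhood of the base point in $E^1_\pi$; translating by $N_-\times N_+$ shows $E^\circ_\pi$ is open in $E^1_\pi$, and being nonempty open in an irreducible variety it is also dense.

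The main obstacle is the careful bookkeeping in the tangent-space computation: one must disentangle the three types of columns --- image of $\pi$, extra image of $\bar\pi$ beyond $\pi$, and non-image of $\bar\pi$ --- under both linearized equations, sorting which of them are vacuous, which force a zero, and which couple $A_-$ and $A_+$ non-trivially. The key mechanism is that the distinct-eigenvalue hypothesis is imposed only on $R\times R$, so the $k$ zero rows of $\pi$ contribute exactly $k$ new trivial kernel directions $\dot s_i$, which accounts for the dimension deficit between the maximal-rank formula $m(n+1)$ of lemma \ref{lem:dense} and the present $mn+\rank\pi = m(n+1)-k$.
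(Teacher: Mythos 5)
Your proposal is correct and follows essentially the same route as the paper: verify $(s\pi,\bar\pi^T t)\in E$, compute the stabilizer/kernel of the linearized action, and match the resulting dimension $mn+\rank\pi$ against lemma~\ref{lem:dim} to get density. The only step you gloss over is that the distinct-eigenvalue argument kills $(A_-)_{i'i}$ only when at least one of $i,i'$ lies in $R$; for a pair of zero rows of $\pi$ the paper needs a separate argument from the second linearized equation using that $\bar\pi$ is increasing on the zero rows --- this is part of the ``careful bookkeeping'' you defer, but the mechanism you identify is the right one.
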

\begin{proof}
We need to carefully generalize the proof of lemma~\ref{lem:dense}. We denote $I$ the complement of $\{i_1,\ldots,i_k\}$
in $\{1,\ldots,m\}$.

Computing the infinitesimal stabilizer of $(s\pi, \bar\pi^Tt)$ under $N_-\times N_+$ action leads to
\[
A_- s \pi - s \pi A_+ = 0,\qquad A_+ \bar\pi^T t - \bar\pi^T t A_- = 0
\]
where $A_-$ is $m\times m$ strictly lower triangular, and $A_+$ is $n\times n$ strictly upper triangular.

Pick $1\le i<i'\le m$. There are two possibilities:
\begin{itemize}
\item If $i\in I$ or $i'\in I$, we use the same argument as in the maximal rank case, namely, compute
\[
A_- D = D A_- \qquad D:=s\pi\bar\pi^Tt
\]
Now $D$ is a diagonal matrix with entries $(st)_{ii}$ for $i\in I$, zero otherwise.
By taking the entry $(i',i)$ of this equation, and using the fact that $D_{ii}\ne D_{i'i'}$
as long as at least one of $i$ or $i'$ is in $I$, we conclude that $(A_-)_{i'i}=0$.

\item Otherwise $i\not\in I$ and $i'\not\in I$.
Define $\bar\pi(i)=j$, $\bar\pi(i')=j'$: according to the definition of $\bar\pi$, $j<j'$.
Compute the entry $(j',i)$ of the second equation: 
\begin{align*}
(A_+\bar\pi^T t)_{j'i}&=(A_+)_{j'j} t_i=0\qquad j'>j
\\
(\bar\pi^T t A_-)_{j'i}&=t_{i'} (A_-)_{i'i}
\end{align*}
so $(A_-)_{i'i}=0$.
\end{itemize}
We conclude once again that $A_-=0$. We're left with the equations
\[
\pi A_+=0 \qquad A_+\bar\pi^T=0
\]
Clearly this implies that $(A_+)_{jj'}=0$ if either $j$ is in the image of $\pi$, or $j'$ in the image of $\bar\pi$.
Note however that if $j$ is in the image of $\bar\pi$, it is either in the image of $\pi$ or it's greater than any $j'$
not in the image of $\bar\pi$ (i.e., than any $j' \in\{j_1,\ldots,j_{n-m}\}$ by definition of $\bar\pi$), in which case $(A_+)_{jj'}=0$ by upper triangularity.
In conclusion, $(A_+)_{jj'}=0$ unless neither $j$ nor $j'$ are in the image of $\bar\pi$.
This leaves once again a strict $(n-m)\times(n-m)$ upper triangle of free entries.

The dimension of 
$(N_-\times N_+)\cdot\left\{ (s\pi, \bar\pi^T t)\right\}$ is the dimension of the group plus the dimension of the set
$\left\{ (s\pi, \bar\pi^T t)\right\}$ minus the dimension of the stabilizer,
that is
\[
m(m-1)/2+n(n-1)/2+\rank\pi+\rank \bar\pi-(n-m)(n-m-1)/2=mn+\rank\pi
\]
which equals the dimension of $E^1_\pi$, hence the density statement.
\end{proof}

\begin{cor}\label{cor:equiincl}
$E_\pi^1 \subseteq E_{\bar\pi}$.
\end{cor}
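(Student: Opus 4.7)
The plan is to combine the density statement of Lemma~\ref{lem:equidense} with a one-parameter deformation to realize $s\pi$ as a limit of points $\bar s_\epsilon \bar\pi$ inside the closed set $E_{\bar\pi}$. Concretely, since $E_{\bar\pi}$ is closed and (by Proposition~\ref{prop:irr}) invariant under $B_-\times B_+\supseteq N_-\times N_+$, and since $E^\circ_\pi$ is dense in $E^1_\pi$ with $(N_-\times N_+)$-orbit representatives of the form $(s\pi,\bar\pi^T t)$, it suffices to prove that each such representative lies in $E_{\bar\pi}$.

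To build the deformation, let $I:=\{1,\ldots,m\}\setminus\{i_1,\ldots,i_k\}$ index the nonzero rows of $\pi$, and define a diagonal matrix $\bar s_\epsilon$ by $(\bar s_\epsilon)_{ii}=s_{ii}$ for $i\in I$ and $(\bar s_\epsilon)_{i_r i_r}=\epsilon_r$ for $r=1,\ldots,k$. Because $\pi$ and $\bar\pi$ agree on the rows indexed by $I$, and because row $i_r$ of $\bar\pi$ is a single standard basis vector, we have $\bar s_\epsilon \bar\pi \to s\pi$ coefficientwise as $\epsilon\to 0$, while the second factor $\bar\pi^T t$ is held fixed throughout.

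Finally, for generic small $\epsilon$ with all $\epsilon_r$ nonzero, the product $\bar s_\epsilon t$ is invertible with distinct diagonal entries: the values $s_{ii}t_{ii}$ for $i\in I$ are already distinct and nonzero by hypothesis on $(s,t)$, and the additional values $\epsilon_r t_{i_r i_r}$ avoid both these and each other outside a proper algebraic subset of $\epsilon$-space. Lemma~\ref{lem:dense} applied to the maximal-rank permutation $\bar\pi$ then puts $(\bar s_\epsilon \bar\pi,\bar\pi^T t)$ in $E^\circ_{\bar\pi}\subseteq E_{\bar\pi}$, and closedness of $E_{\bar\pi}$ gives the limit $(s\pi,\bar\pi^T t)\in E_{\bar\pi}$. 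The main point is spotting the right deformation $\bar s_\epsilon$; once that is done, the only genuine bookkeeping is the genericity of $\epsilon$, which is routine.
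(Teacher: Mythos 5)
Your proof is correct and takes essentially the same route as the paper: the paper's deformation $X_\alpha = s\bigl((1-\alpha)\pi + \alpha\bar\pi\bigr)$ is exactly your $\bar s_\epsilon\bar\pi$ with $\epsilon_r=\alpha s_{i_r i_r}$, holding $Y=\bar\pi^T t$ fixed and invoking Lemma~\ref{lem:equidense}'s density plus closedness of $E_{\bar\pi}$. Your write-up just makes explicit the genericity check that the paper leaves implicit.
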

\begin{proof}
Given $(X,Y)\in E_\pi^\circ$, where without loss of generality one may assume $X=s\pi$, $Y=\bar\pi^Tt$,
consider $X_\alpha=s ((1-\alpha)\pi + \alpha \bar\pi)$, $\alpha\in\CC$.
One has $(X_0,Y)=(X,Y)$ and for generic $\alpha$, $(X_\alpha,Y)\in E^1_{\bar\pi}\subset E_{\bar\pi}$.
The inclusion follows by the density statement of lemma~\ref{lem:equidense}.
\end{proof}

\begin{thm}\label{thm:Eequidim}
  $E$ is equidimensional.
\end{thm}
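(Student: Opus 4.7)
The plan is to show that $E$ is a finite union of irreducible closed subvarieties $E_\pi$ indexed by maximal rank partial permutation matrices $\pi\in\SS_{m,n}$, each of the same dimension $m(n+1)$. Equidimensionality is then immediate.

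First, I would note that $E$ is invariant under the $B_-\times B_+$ action, so its projection $p_1(E)$ is a $B_-\times B_+$-stable subset of $\Mat(m,n,\CC)$. By the Bruhat-type decomposition of $\Mat(m,n,\CC)$ into $B_-\times B_+$-orbits, $p_1(E)$ is a union of orbits $(B_-\times B_+)\cdot\pi$, where $\pi$ ranges over $m\times n$ partial permutation matrices of various ranks. This gives the set-theoretic decomposition
\[
E = \bigcup_\pi E^1_\pi,
\]
the union taken over \emph{all} partial permutation matrices $\pi$ (not only the maximal rank ones).

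By lemma~\ref{lem:dim}, each $E^1_\pi$ is smooth and irreducible of dimension $mn+\rank\pi$. Consequently, its closure is an irreducible subvariety of the same dimension, and the top-dimensional ones correspond exactly to $\rank\pi=m$, yielding dimension $m(n+1)$; these are by definition the varieties $E_\pi$ with $\pi\in\SS_{m,n}$.

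It remains to show that any non-maximal-rank stratum $E^1_\pi$ is absorbed into one of the top-dimensional components. This is exactly the content of corollary~\ref{cor:equiincl}: given any partial permutation matrix $\pi$, its ``minimal completion'' $\bar\pi$ is a maximal rank partial permutation matrix and $E^1_\pi\subseteq E_{\bar\pi}$. Taking closures gives $\overline{E^1_\pi}\subseteq E_{\bar\pi}$. Thus every irreducible closed subset of $E$ appearing in the above stratification is contained in some top-dimensional $E_\pi$, and therefore
\[
E = \bigcup_{\pi\in\SS_{m,n}} E_\pi,
\]
a finite union of irreducible varieties all of the same dimension $m(n+1)$. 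This proves equidimensionality, and simultaneously confirms that the $E_\pi$ with $\pi$ of maximal rank exhaust the list of irreducible components.

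I don't anticipate a real obstacle here: the only nontrivial input is corollary~\ref{cor:equiincl}, whose proof (already given) used a one-parameter degeneration $X_\alpha=s((1-\alpha)\pi+\alpha\bar\pi)$ together with the density statement of lemma~\ref{lem:equidense}. The remaining combinatorial content, namely that every $B_-\times B_+$-orbit in $\Mat(m,n,\CC)$ is represented by a unique partial permutation matrix, is classical.
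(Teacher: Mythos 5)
Your proposal is correct and follows the same route as the paper: decompose $E$ as the union of the strata $E^1_\pi$ over all partial permutation matrices, use lemma~\ref{lem:dim} for their dimensions, and invoke corollary~\ref{cor:equiincl} to absorb the non-maximal-rank strata into the top-dimensional components $E_{\bar\pi}$. The extra detail you supply about the $B_-\times B_+$ orbit decomposition of $\Mat(m,n,\CC)$ is a correct elaboration of what the paper leaves implicit.
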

\begin{proof}
$E=\bigcup_\pi \overline{E_\pi^1}$ decomposes into a union of finitely many irreducible closed subvarieties,
so these are the potential irreducible components of $E$.
But if $\pi$ is not maximal rank, one has according to corollary~\ref{cor:equiincl}
$\overline{E_\pi^1}\subset \overline{E_{\bar\pi}^1}$ with $\bar\pi\ne\pi$.
So the only components are the $E_\pi=\overline{E_{\pi}^1}$ with $\pi$ maximal rank,
i.e., the top-dimensional ones according to lemma~\ref{lem:dim}.
\end{proof}

\gdef\MRshorten#1 #2MRend{#1}%
\gdef\MRfirsttwo#1#2{\if#1M%
MR\else MR#1#2\fi}
\def\MRfix#1{\MRshorten\MRfirsttwo#1 MRend}
\renewcommand\MR[1]{\relax\ifhmode\unskip\spacefactor3000 \space\fi
\MRhref{\MRfix{#1}}{{\scriptsize \MRfix{#1}}}}
\renewcommand{\MRhref}[2]{%
\href{http://www.ams.org/mathscinet-getitem?mr=#1}{#2}}
\bibliographystyle{amsalphahyper}
\bibliography{biblio}
\end{document}